\documentclass[reqno]{amsart}
\usepackage{amsmath}
\usepackage{amsxtra}
\usepackage{amscd}
\usepackage{amsthm}
\usepackage{amsfonts}
\usepackage{amssymb}
\usepackage{enumitem}
\usepackage{microtype}
\usepackage{graphicx}
\usepackage{epstopdf}
\usepackage{mathrsfs}
\usepackage{eucal} 
\usepackage{comment}
\usepackage[hidelinks, pagebackref, bookmarks=false]{hyperref}
\hypersetup{
    colorlinks,
    citecolor=black,
    filecolor=black,
    linkcolor=blue,
    urlcolor=black,
}
\usepackage{cite}
\usepackage{microtype}
\usepackage{url}


\input xy
\xyoption{all}

\numberwithin{equation}{subsection} 
\numberwithin{figure}{subsection} 

\makeatletter
\let\c@equation\c@figure
\makeatother

\newtheorem{theorem}[equation]{Theorem}
\newtheorem{corollary}[equation]{Corollary}
\newtheorem{lemma}[equation]{Lemma}
\newtheorem{prop}[equation]{Proposition}

\theoremstyle{remark}
\newtheorem{remark}[equation]{Remark}
\newtheorem{example}[equation]{Example}

\theoremstyle{definition}
\newtheorem{defn}[equation]{Definition}

\newcommand\nc{\newcommand}
\nc\on{\operatorname}

\newcommand\fp{{\mathfrak p}}
\newcommand\fq{{\mathfrak q}}
\newcommand\fm{{\mathfrak m}}

\nc\Hom{\on{Hom}}
\nc\Sections{\on{Sections}}
\nc\Sym{\on{Sym}}
\nc\Spec{\on{Spec}}
\nc\Specm{\on{Specm}}
\nc\ul{\underline}
\nc{\dfp}{\overset{\cdot}{\fp}}
\nc{\dfq}{\overset{\cdot}{\fq}}
\nc{\dfm}{\overset{\cdot}{\fm}}

\begin{document}

\title{Arithmetic representations of fundamental groups II:   finiteness}
\author{Daniel Litt}

\begin{abstract}
Let $X$ be a smooth curve over a finitely generated field $k$, and let $\ell$ be a prime different from the characteristic of $k$.  We analyze the dynamics of the Galois action on the deformation rings of mod $\ell$ representations of the geometric fundamental group of $X$.  Using this analysis, we prove analogues of the Shafarevich and Fontaine-Mazur finiteness conjectures for function fields over algebraically closed fields in arbitrary characteristic, and a weak variant of the Frey-Mazur conjecture for function fields in characteristic zero.  

For example, we show that if $X$ is a normal, connected variety over $\mathbb{C}$, the (typically infinite) set of representations of $\pi_1(X^{\text{an}})$ into $GL_n(\overline{\mathbb{Q}_\ell})$, which come from geometry, has no limit points.  As a corollary, we deduce that if $L$ is a finite extension of $\mathbb{Q}_\ell$, then the set of representations of $\pi_1(X^{\text{an}})$ into $GL_n(L)$, which arise from geometry, is finite.
\end{abstract}
\maketitle

\setcounter{tocdepth}{1}
\tableofcontents

\section{Introduction}
The purpose of this paper is to study the representations of the \'etale fundamental group of a variety $X$ over a finitely generated field $k$, via an analysis of the Galois action on $\pi_1^{\text{\'et}}(X_{\bar k}, \bar x)$.  This work was begun in \cite{litt}, which studied integral aspects of such representations.  In this paper we focus on finiteness results, motivated by the Shafarevich conjecture \cite[Satz 6]{faltings}, the Fontaine-Mazur finiteness conjectures \cite[Conjectures 2a and 2b]{fontaine-mazur}, and the Frey-Mazur conjecture (see the question at the end of the introduction of \cite{mazur3} for the original question, and the introduction of \cite{bakker-tsimerman} for a corrected statement).

Part of the goal of this work is to give \emph{anabelian} approaches to function field analogues of standard conjectures about representations of Galois groups of number fields, in the hope that these techniques can be transported to the number field setting. In particular, most of our main results have purely group-theoretic statements.

\subsection{Main results}
Let $k$ be a finitely generated field, and $X/k$ a curve (a smooth, separated, geometrically connected $k$-scheme of dimension $1$).  Choose an algebraic closure $\bar k$ of $k$, and let $\bar x$ be a geometric point of $X$. 
\begin{defn}
Let $\ell$ be a prime different from the characteristic of $k$, and let $L$ be an $\ell$-adic field (an algebraic extension of $\mathbb{Q}_\ell$ or the completion thereof). We say that a continuous representation $$\rho: \pi_1^{\text{\'et}}(X_{\bar k}, \bar x)\to GL_n(L)$$ is \emph{arithmetic} if there exists a finite extension $k'$ of $k$ and a continuous representation $$\tilde \rho: \pi_1^{\text{\'et}}(X_{k'}, \bar x)\to GL_n(L)$$ such that $\rho$ is a subquotient of $\tilde \rho|_{\pi_1^{\text{\'et}}(X_{\bar k}, \bar x)}.$
\end{defn}

The main examples of arithmetic representations are those \emph{arising from geometry} (see Definition \ref{arises-from-geometry-defn} for a precise definition):
\begin{example}
Suppose $$f: Y\to X_{\bar k}$$ is a smooth proper morphism of varieties over $\bar k$.  Then for any $i\geq 0$,  any subquotient of the monodromy representation $$\pi_1(X_{\bar k}, \bar x)\to GL((R^if_*\underline{\mathbb{Q}_\ell})_{\bar x}\otimes L)$$ is arithmetic.  (See Proposition \ref{comes-from-geometry} for a proof of a mild generalization of this fact.)
\end{example}
The purpose of this paper is to study arithmetic representations, and to apply this study to the understanding the representations which arise from geometry.
\subsubsection{Results on finiteness}
Our first main result is:
\begin{theorem}\label{main-theorem}
Let $c\in \mathbb{R}$ be such that $0<c\leq 1$.  Let $$\bar\rho: \pi_1^{\text{\'et}}(X_{\bar k}, \bar x)\to GL_n(\mathbb{F}_{\ell^r})$$ be a representation.  Then the set of semisimple arithmetic representations $$\rho: \pi_1^{\text{\'et}}(X_{\bar k}, \bar x) \to GL_n(\mathbb{C}_\ell)$$ with $$\on{Tr}(\rho)\equiv \on{Tr}(\bar \rho)\bmod \ell^c$$ is finite.
\end{theorem}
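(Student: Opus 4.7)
The plan is to translate the theorem into a statement about Galois-fixed points on a (pseudo-)deformation space, and then to invoke the paper's analysis of the dynamics of the Galois action on such spaces.

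First, I would reduce to a single residual semisimplification. A semisimple representation to $GL_n(\mathbb{C}_\ell)$ is determined by its trace (Brauer--Nesbitt); the congruence $\on{Tr}(\rho)\equiv \on{Tr}(\bar\rho)\bmod \ell^c$ with $c>0$ pins down the reduction of $\on{Tr}(\rho)$ modulo $\ell$, and hence pins down the semisimplification $\bar\rho^{\text{ss}}$ of the mod-$\ell$ reduction of $\rho$ up to isomorphism. Working then with the rigid-analytic space $\mathcal R$ of $n$-dimensional continuous pseudo-characters of $\pi_1^{\text{\'et}}(X_{\bar k},\bar x)$ lifting that of $\bar\rho^{\text{ss}}$ (in the sense of Chenevier), the congruence $\on{Tr}(\rho)\equiv \on{Tr}(\bar\rho)\bmod \ell^c$ cuts out a bounded closed rigid disk $D\subset\mathcal R$; the semisimple representations we want to count are precisely the $\mathbb{C}_\ell$-points of $D$ corresponding to arithmetic representations.

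Next, I would encode arithmeticity as a Galois-invariance condition. By the homotopy exact sequence, the group $\on{Gal}(\bar k/k')$ acts on $\pi_1^{\text{\'et}}(X_{\bar k},\bar x)$ by outer automorphisms and hence on $\mathcal R$. Given a semisimple arithmetic $\rho$, I realize it as a direct summand of $\tilde\rho^{\text{ss}}|_{\pi_1^{\text{\'et}}(X_{\bar k},\bar x)}$ for some $\tilde\rho$ defined over $k'$: the conjugation action of $\on{Gal}(\bar k/k')$ permutes the isotypic summands and hence, after enlarging $k'$, fixes the isomorphism class of $\rho$. Since semisimple representations are determined by their traces, this says $\rho$ gives a Galois-fixed point of $D$. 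Choosing a Frobenius element $F$ at a closed point of a suitable integral model of $k'$, it then suffices to bound the fixed locus of $F$ on $D$.

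The heart of the argument is to prove this fixed locus is finite, using the Galois-dynamical input developed earlier in the paper. After enlarging $k'$ so that the Frobenius eigenvalues of $F$ on the relevant \'etale cohomology avoid $1$, the induced action of $F$ on the formal completion of $\mathcal R$ at any fixed point should be contracting: its derivative has $\ell$-adic weights bounded away from zero, since those weights are controlled by Frobenius eigenvalues on the tangent space to the pseudo-deformation ring, which is built from Ext groups of $\bar\rho^{\text{ss}}$. A Weierstrass-preparation and rigid-analytic compactness argument on the bounded disk $D$ then converts formal contraction into finiteness of the global fixed locus. The principal obstacle is precisely this last step: upgrading the tangent-space contraction to a genuine rigid-analytic contraction on $D$, uniformly across potential fixed points so that the global count is finite and not merely locally finite. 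This is where the most delicate technical input from the dynamical analysis advertised in the abstract must be brought to bear.
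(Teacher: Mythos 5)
Your overall frame --- the pseudocharacter space, the affinoid cut out by the trace congruence, arithmeticity recast as Galois-invariance, and a dynamical finiteness argument on the fixed locus --- matches the paper's strategy, but two essential steps are missing or wrong. The first is uniformity: the open subgroup of the Galois group fixing $\rho$ depends on $\rho$, so different arithmetic representations are fixed only by different (arbitrarily large) powers of Frobenius, and you cannot choose a single $F$ whose fixed locus contains them all. A priori the arithmetic points are only \emph{periodic} points of the Frobenius action on the affinoid $E_{\bar\rho,c}$, and the paper's central dynamical input --- the uniform local dynamical Mordell--Lang lemma (Lemma \ref{mordell-lang}, via Poonen's $\ell$-adic analytic arc lemma) --- exists precisely to produce one integer $M$, depending only on the affinoid and not on the point, such that every periodic point is fixed by $\on{Frob}^M$. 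Your proposal passes over this step, which is where the real work is.

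The second gap is the ``contraction'' mechanism. The Frobenius eigenvalues on $H^1(X_{\bar k},\rho\otimes\rho^\vee)$ are Weil numbers of archimedean weight $1$ or $2$; for $\ell\neq p$ they are typically $\ell$-adic units, so the derivative of $F$ at a fixed point is \emph{not} $\ell$-adically contracting and its ``$\ell$-adic weights'' are not bounded away from zero. What is true, and what Lemma \ref{rigidity-lemma} uses, is that these eigenvalues differ from $1$ because their \emph{archimedean} absolute values are $q^{1/2}$ or $q$, so the fixed locus has trivial tangent space at such a point; but establishing the weights requires Lafforgue's theorem that $\rho\otimes\rho^\vee$ is pure of weight zero for irreducible arithmetic $\rho$ over a finite field --- an input absent from your sketch and available only at \emph{irreducible} points of the fixed locus (the relevant cohomology is that of the adjoint of the characteristic-zero point $\rho$, not Ext groups of $\bar\rho^{\mathrm{ss}}$). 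The paper circumvents both the reducible points and your ``upgrade local to global'' worry in one stroke: it first proves (Theorem \ref{rigidity-theorem}) that every semisimple arithmetic $\mathbb{C}_\ell$-representation is defined over $\overline{\mathbb{Q}_\ell}$, and then notes that an infinite analytic fixed locus inside the affinoid would be positive-dimensional by Weierstrass preparation and hence would contain a $\mathbb{C}_\ell$-point not defined over $\overline{\mathbb{Q}_\ell}$, a contradiction. Finally, the reduction from finitely generated $k$ to finite fields needs care you do not supply: the specialization map is an isomorphism only on the pro-$\ell$ quotient of $\pi_1$, which the paper arranges by first passing to the cover trivializing $\bar\rho$.
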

Here $\mathbb{C}_\ell:=\widehat{\overline{\mathbb{Q}_\ell}}$ is the completion of the algebraic closure of $\mathbb{Q}_\ell$.
\begin{remark}
A slight weakening of Theorem \ref{main-theorem} admits a more pithy statement. Namely, Theorem \ref{main-theorem} implies that the set of semisimple arithmetic representations of $\pi_1^{\text{\'et}}(X_{\bar k}, \bar x)\to GL_n(\mathbb{C}_\ell)$ has no limit points.  That is, if $\{\rho_i\}$ is a sequence of semisimple arithmetic representations with $\on{Tr}(\rho_i)\to \on{Tr}(\rho)$ pointwise, then the sequence $\{\rho_i\}$ is eventually constant.
\end{remark}

As a corollary, we have the following finiteness result:
\begin{corollary}\label{main-corollary}
Suppose $L$ is finite over $\mathbb{Q}_\ell$, with residue field $\mathbb{F}_{\ell^r}$, ring of integers $\mathscr{O}_L$, and maximal ideal $\mathfrak{m}_L$.  Then 
\begin{enumerate}
\item If $\bar\rho: \pi_1^{\text{\'et}}(X_{\bar k}, \bar x)\to GL_n(\mathbb{F}_{\ell^r})$ is a continuous representation, the set of semisimple arithmetic representations (resp.~representations which arise from geometry) $$\tilde\rho:  \pi_1^{\text{\'et}}(X_{\bar k}, \bar x)\to GL_n(L)$$ with $\on{Tr}(\tilde\rho)\equiv \on{Tr}(\rho) \bmod \mathfrak{m}_L$ is finite.
\item If $\on{char}(k)=0,$ the set of semisimple arithmetic representations (resp.~representations which arise from geometry) $$\rho: \pi_1^{\text{\'et}}(X_{\bar k}, \bar x) \to GL_n(L)$$ is finite.
\item If $\on{char}(k)>0,$ the set of semisimple tame arithmetic representations (resp.~tame representations which come from geometry) $$\rho: \pi_1^{\text{tame}}(X_{\bar k}, \bar x) \to GL_n(L)$$ is finite.
\end{enumerate}
\end{corollary}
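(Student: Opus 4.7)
The plan is to deduce all three parts from Theorem~\ref{main-theorem} together with standard facts about (tame) \'etale fundamental groups of curves. The key conceptual point is that each condition can be re-expressed in the form appearing in that theorem, and that parts (2) and (3) then reduce to a statement about the set of possible residual representations.

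For part (1), let $e$ be the ramification index of $L/\mathbb{Q}_\ell$, so that $\mathfrak{m}_L$ consists of the elements of $\mathscr{O}_L$ of $\ell$-adic valuation at least $1/e$ (normalized so that $v_\ell(\ell)=1$). Any continuous representation $\tilde\rho \colon \pi_1^{\text{\'et}}(X_{\bar k}, \bar x) \to GL_n(L)$ has compact image, hence preserves some $\mathscr{O}_L$-lattice; in particular $\on{Tr}(\tilde\rho)$ takes values in $\mathscr{O}_L \subset \mathscr{O}_{\mathbb{C}_\ell}$, and the hypothesis $\on{Tr}(\tilde\rho) \equiv \on{Tr}(\bar\rho) \pmod{\mathfrak{m}_L}$ in $\mathscr{O}_L$ is literally the congruence $\on{Tr}(\tilde\rho) \equiv \on{Tr}(\bar\rho) \pmod{\ell^{1/e}}$ appearing in Theorem~\ref{main-theorem}. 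Since the inclusion $GL_n(L) \hookrightarrow GL_n(\mathbb{C}_\ell)$ preserves both semisimplicity and isomorphism classes (semisimple representations being determined by their trace functions), applying the theorem with $c = 1/e \leq 1$ yields the desired finiteness.

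Parts (2) and (3) reduce to part (1) by summing over residual representations. In characteristic zero, $\pi_1^{\text{\'et}}(X_{\bar k}, \bar x)$ is topologically finitely generated---for a curve, it is the profinite completion of a free or surface group---so the set of continuous homomorphisms to the finite group $GL_n(\mathbb{F}_{\ell^r})$, up to isomorphism, is finite. For any semisimple arithmetic $\rho \colon \pi_1^{\text{\'et}}(X_{\bar k}, \bar x) \to GL_n(L)$, pick a stable $\mathscr{O}_L$-lattice and form the semisimplification $\bar\rho^{ss}$ of its mod-$\mathfrak{m}_L$ reduction; then $\on{Tr}(\rho) \equiv \on{Tr}(\bar\rho^{ss}) \pmod{\mathfrak{m}_L}$, so summing the conclusion of part (1) over the finitely many possible $\bar\rho^{ss}$ proves part (2). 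Part (3) is identical, with $\pi_1^{\text{\'et}}$ replaced by $\pi_1^{\text{tame}}$, whose topological finite generation (for a smooth curve in positive characteristic) is due to Grothendieck-Murre. In each part, the assertion for representations arising from geometry follows from the semisimple arithmetic assertion, since such representations are arithmetic by the example preceding the theorem and are semisimple by Deligne's theorem on weights.

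I do not anticipate any serious obstacle beyond Theorem~\ref{main-theorem} itself: the corollary is essentially a change-of-normalization in part~(1) and a counting argument using topological finite generation of (tame) fundamental groups in parts~(2) and~(3). The only mildly subtle step is remembering to pass to semisimplifications of residual representations in the reduction, but this is harmless because semisimple $\ell$-adic representations are determined by their traces.
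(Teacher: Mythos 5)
Your proposal is correct and follows essentially the same route as the paper: part (1) is obtained by converting the congruence modulo $\mathfrak{m}_L$ into a congruence modulo $\ell^c$ for a suitable $c\in(0,1]$ (the paper takes $c$ strictly smaller than the minimal valuation of elements of $\mathfrak{m}_L$, which is an immaterial difference) and invoking Theorem \ref{main-theorem}, while parts (2) and (3) reduce to (1) via the finiteness of residual representations coming from topological finite generation of $\pi_1^{\text{\'et}}$ (resp.\ $\pi_1^{\text{tame}}$), and the geometric case follows from Proposition \ref{comes-from-geometry} together with semisimplicity from Weil II. Your extra step of semisimplifying the residual reduction is harmless but unnecessary, since the trace is unchanged by semisimplification.
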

This is the geometric analogue of the Shafarevich and Fontaine-Mazur finiteness conjectures referred to in the abstract.  
\begin{remark}
Theorem \ref{main-theorem} and the parts of Corollary \ref{main-corollary} about semisimple arithmetic representations are purely group-theoretic statements about the structure of the arithmetic fundamental group $\pi_1^{\text{\'et}}(X, \bar x)$.
\end{remark}
\begin{remark}
Note that if $L$ is replaced by $\overline{\mathbb{Q}_\ell}$ in  Corollary \ref{main-corollary} above, the statement is false, as may be seen by taking $X=\mathbb{G}_m$; in this case $\pi_1^{\text{\'et}}(X_{\bar k}, \bar x)\simeq \widehat{\mathbb{Z}}$, and the arithmetic representations (resp.~representations which arise from geometry) $\pi_1^{\text{\'et}}(X_{\bar k}, \bar x) \to \overline{\mathbb{Q}_\ell}^{\times}$ are precisely the characters of finite order, of which there are infinitely many.
\end{remark}
\begin{remark}
Corollary \ref{main-corollary}(3) is false without the tameness assumption.  For example, one may take $X=\mathbb{A}^1_k$; then $\on{Hom}(\pi_1^{\text{\'et}}(X_{\bar k}, \bar x), \mathbb{F}_p)$ is not finitely generated, and hence there are infinitely many representations $\pi_1^{\text{\'et}}(X_{\bar k}, \bar x)\to GL_p(\mathbb{Q}_\ell)$ with finite image.  Representations with finite image are always arithmetic (and always arise from geometry). One may, however, replace the tame fundamental group with any topologically finitely-generated quotient of $\pi_1^{\text{\'et}}(X_{\bar k}, \bar x)$.
\end{remark}
\begin{remark}
One may prove higher-dimensional analogues of Theorem \ref{main-theorem} and Corollary \ref{main-corollary} by reduction to the case of curves, via a Lefschetz argument.

Moreover, one may deduce results for varieties over arbitrary fields by a standard spreading-out and specialization argument. For example, Corollary \ref{main-corollary} implies that if $X$ is a connected, normal variety over $\mathbb{C}$, the set of reprsentations of $\pi_1(X^{\text{an}})$ into $GL_n(\mathbb{Q}_\ell)$, which arise from geometry, is finite.
\end{remark}
\subsubsection{A weak analogue of the Frey-Mazur conjecture}
Suppose now that $$\bar\rho: \pi_1^{\text{\'et}}(X_{\bar k}, \bar x)\to GL_n(\mathbb{F}_{\ell^r})$$ is geometrically irreducible.  Then Theorem \ref{main-theorem} implies (by \cite[Th\'{e}or\`{e}me 1]{carayol}) that given a semisimple arithmetic representation $$\tilde\rho: \pi_1^{\text{\'et}}(X_{\bar k}, \bar x)\to GL_n(\overline{\mathbb{Z}_\ell})$$ lifting $\bar\rho$, there exists $d\in \mathbb{Q}_{>0}$ such that for any semisimple arithmetic $\tilde\rho'$ with $$\tilde\rho'\simeq \tilde\rho \bmod \ell^d,$$ we have that $\tilde\rho'\simeq \tilde\rho$.  (Here $\overline{\mathbb{Z}_\ell}$ is the valuation ring of $\overline{\mathbb{Q}_\ell}$.)  That is, there is a ball of radius $\ell^{-d}$ around $\tilde\rho$ such that $\tilde\rho$ is the unique semisimple arithmetic lift of $\bar\rho$ within this ball. However, the proof of Theorem \ref{main-theorem} gives no way to effectively compute such a constant $d$.  

Our final main result gives a method to effectively compute such a constant $d>0$, in terms of cohomological invariants of $\tilde\rho$, as long as $\on{char}(k)=0$ and $\tilde\rho$ arises from geometry.  This is a weak  version of the Frey-Mazur conjecture for function fields (see e.g.~\cite{mazur3, bakker-tsimerman}), which asserts that a monodromy representation should be determined by its mod $\ell^d$ reduction, for $\ell^d$ large in terms of the geometric invariants of $X$ and the dimension of $\tilde\rho$, if $\tilde\rho$ arises from the Tate module of an Abelian $X$-scheme.

\begin{theorem}\label{ball-bound}
Let $X$ be a smooth, geometrically connected curve over a finitely generated field $k$ of characteristic zero, and let $\bar x$ be a geometric point of $X$.  Let $$\rho: \pi_1^{\text{\'et}}(X_{\bar k}, \bar x)\to GL_n(\overline{\mathbb{Q}_\ell})$$ be a representation which arises from geometry, lifting a geometrically irreducible residual representation $\bar\rho$.  Then there exists an explicit constant $N=N(c(\rho),\ell)$ such that any semisimple arithmetic representation $$\tilde\rho:  \pi_1^{\text{\'et}}(X_{\bar k}, \bar x)\to GL_n(\overline{\mathbb{Q}_\ell})$$ with $$\on{Tr}(\tilde\rho)\equiv \on{Tr}(\rho)\bmod \ell^N$$ satisfies $\rho\simeq \tilde\rho$.  Here $c(\rho)$ is defined as in Definition \ref{index-of-homothety}.
\end{theorem}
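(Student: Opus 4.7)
The plan is to upgrade the qualitative finiteness of Theorem \ref{main-theorem} to an effective statement by working inside the framed deformation ring of a common residual representation and extracting an explicit contraction rate for the Galois action from the geometric hypothesis on $\rho$. After replacing $\overline{\mathbb{Q}_\ell}$ by a sufficiently large finite extension $L/\mathbb{Q}_\ell$ with ring of integers $\mathscr{O}$, I may assume both $\rho$ and $\tilde\rho$ take values in $GL_n(\mathscr{O})$ with a common residual representation $\bar\rho$, which is geometrically irreducible by hypothesis. Carayol's theorem then promotes the trace congruence $\operatorname{Tr}(\tilde\rho) \equiv \operatorname{Tr}(\rho) \bmod \ell^N$ to an honest congruence $\tilde\rho \equiv \rho \bmod \ell^{N'}$ (after a suitable inner conjugation), where $N'$ is linearly related to $N$. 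Equivalently, $\rho$ and $\tilde\rho$ give $\mathscr{O}$-points of the universal framed deformation ring $R_{\bar\rho}^{\square}$ that are $\ell^{N'}$-close.

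The ring $R_{\bar\rho}^{\square}$ carries a natural action of an open subgroup of $\operatorname{Gal}(\bar k/k)$, and arithmetic representations correspond to fixed points of this action (up to inner twist); this is the viewpoint developed in \cite{litt}. Thus $\rho$ and $\tilde\rho$ are both arithmetic fixed points that lie in a small common ball. The central task is a quantitative local statement: near a \emph{geometric} arithmetic fixed point $\rho$, the Galois action contracts $R_{\bar\rho}^{\square}$ at a rate explicitly computable in terms of $c(\rho)$ (Definition \ref{index-of-homothety}). At the tangent-space level, this contraction is controlled by the eigenvalues of Frobenius on $H^1(\pi_1^{\text{\'et}}(X_{\bar k}, \bar x), \operatorname{ad}(\rho))$, and purity (the Weil conjectures) bounds these eigenvalues away from the unit circle precisely because $\rho$ comes from geometry. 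The invariant $c(\rho)$ is designed to package exactly the cohomological data needed to turn this Frobenius-weight bound into a concrete contraction constant. Since $\tilde\rho$ is itself Galois-fixed, iterating the Galois action on the $\ell^{N'}$-ball around $\rho$ must collapse $\tilde\rho$ onto $\rho$ as soon as $N$ exceeds an explicit function of $c(\rho)$ and the absolute ramification of $\ell$ in $\mathscr{O}$.

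The main obstacle, I expect, will be bootstrapping the linear contraction estimate from the tangent space to arbitrarily high-order truncations of $R_{\bar\rho}^{\square}$ with uniform explicit constants. Theorem \ref{main-theorem} alone yields finiteness via a compactness argument that gives no control on the constant; extracting $N(c(\rho), \ell)$ requires a dyadic successive-approximation argument along the $\ell$-adic filtration of $R_{\bar\rho}^{\square}$, showing that the nonlinear Galois dynamics at level $k+1$ are well-approximated by the linearized dynamics at level $k$ with error terms explicit in $c(\rho)$. The hypothesis that $\rho$ arises from geometry, rather than being merely arithmetic, is essential precisely here: it is what converts the abstract existence of a contracting direction into an effectively computable contraction rate.
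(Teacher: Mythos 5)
There is a genuine gap at the heart of your proposal: the ``contraction'' you want does not exist $\ell$-adically. Purity bounds the Frobenius eigenvalues on $H^1(\pi_1^{\text{\'et}}(X_{\bar k},\bar x), \on{ad}\rho)$ away from $1$ only in the \emph{archimedean} absolute value (the weights are $1$ and $2$); such eigenvalues are typically $\ell$-adic units, so the linearized Galois dynamics on the deformation ring neither contract nor expand, and iterating the action on an $\ell^{N'}$-ball does not collapse anything onto $\rho$. (Note also that $\on{char}(k)=0$, so there is no distinguished Frobenius to iterate; producing any element of $G_k$ with controlled eigenvalues is already a nontrivial step.) The mechanism the paper actually uses is different: since $\alpha^i\neq 1$ for $\alpha\in\mathbb{Z}_\ell^\times$ not a root of unity, any $\sigma_\alpha$-\emph{periodic} point must annihilate every $\sigma_\alpha$-eigenvector with eigenvalue $\alpha^i$, $i\geq 1$. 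One therefore constructs, for $\alpha$ close to $1$, elements $\sigma_\alpha\in G_k$ acting on $\on{gr}^{-i}_W S_\rho$ by $\alpha^i$ (Theorem \ref{pi1-quasiscalar}); this is where geometricity enters, via the de Rham/Hodge--Tate property of $\rho\otimes\rho^\vee$ and Bogomolov's openness theorem, not via an eigenvalue bound. Arithmetic $\tilde\rho$ are $\sigma_\alpha^M$-fixed for some $M$, hence are common zeros of the eigenvectors $e_i$ cutting out $\rho$.

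Your proposal also has no substitute for the step that actually produces the explicit constant: the eigenvectors $e_i$ are a priori only formal power series around $\rho$, and one must bound their denominators relative to the integral structure $R_{\bar\rho}\otimes\overline{\mathbb{Z}_\ell}\to S_\rho$ to show they converge, with controlled Gauss norm, on a ball of radius $\ell^{-s}$ for $s>3C(\alpha)$ (Lemmas \ref{denominators-lemma}--\ref{denominator-estimate}). This is where $c(\rho)$ enters, through the quantity $C(\alpha)$ for the best available $\alpha$; it is not ``cohomological data converting a Frobenius-weight bound into a contraction constant.'' Your closing paragraph correctly identifies that passing from the tangent space to all orders is the hard part, but the dyadic successive-approximation scheme you sketch has nothing to iterate against once the contraction is gone; the paper's replacement is the weight filtration on $S_\rho$ together with the splitting of the sequence $0\to\mathfrak{m}_\rho^2/\mathfrak{m}_\rho^3\to\mathfrak{m}_\rho/\mathfrak{m}_\rho^3\to\mathfrak{m}_\rho/\mathfrak{m}_\rho^2\to 0$, which requires the separate analysis of the inertia contributions at the punctures (Lemma \ref{horrible-lemma}).
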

\begin{remark}
The statement of Theorem \ref{ball-bound} is not purely group-theoretic, as $\rho$ is required to arise from geometry. However, the proof only requires $\rho$ to be \emph{geometric in the sense of Fontaine-Mazur} --- see Remark \ref{fontaine-mazur-remark} for details.
\end{remark}
In general, the constant $c(\rho)$ appearing in Theorem \ref{ball-bound} may be bounded independently of $\ell$, assuming the Tate conjecture; in particular, assuming the Tate conjecture, if $\{\rho_\ell\}$ is a compatible system of representations of $\pi_1^{\text{\'et}}(X_{\bar k}, \bar x)$ arising from geometry, we have $N(c(\rho_\ell), \ell)\to 0$ as $\ell\to \infty$.  See Remark \ref{tate-remark} for details.

Without the Tate conjecture, we do not know how to estimate the constant $c(\rho)$ appearing in Theorem \ref{ball-bound} above; however, if $\rho$ has finite image, we may bound it using a result of Serre \cite[Lettre \`{a} Ken Ribet, p.~60]{serre-oeuvres}.  In this case, we have the following more uniform result, which is a strengthening of the main theorem of \cite{litt}:
\begin{theorem}\label{finite-image-thm}
Let $X, k, \bar x$ be as in Theorem \ref{ball-bound}. Let $$\rho: \pi_1^{\text{\'et}}(X_{\bar k}, \bar x)\to GL_n(\overline{\mathbb{Q}})$$ be an irreducible representation which factors through a finite quotient $G$ of $\pi_1^{\text{\'et}}(X_{\bar k}, \bar x)$.  Then there exists a sequence of constants $N_G(\ell)$ with $N_G(\ell)\to 0$ as $\ell\to\infty$ such that if $$\tilde\rho: \pi_1^{\text{\'et}}(X_{\bar k}, \bar x)\to GL_n(\overline{\mathbb{Q}_\ell})$$ is semisimple arithmetic with $$\on{Tr}(\rho)\equiv \on{Tr}(\tilde\rho)\bmod \ell^{N_G(\ell)},$$ we have $\rho\otimes \overline{\mathbb{Q}_\ell}\simeq \tilde\rho$. 
\end{theorem}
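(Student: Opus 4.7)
My plan is to deduce Theorem~\ref{finite-image-thm} from Theorem~\ref{ball-bound} by bounding the invariant $c(\rho)$ uniformly as $\ell$ varies, exploiting the cited bound of Serre in his letter to Ribet.

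First I verify that Theorem~\ref{ball-bound} applies to $\rho$. Since $\rho$ factors through a finite quotient $G$ of $\pi_1^{\text{\'et}}(X_{\bar k},\bar x)$, it corresponds to a finite \'etale cover $f:Y\to X_{\bar k}$ and is a direct summand of the monodromy representation on $(f_*\overline{\mathbb{Q}_\ell})_{\bar x}$; hence it arises from geometry in the sense of Definition~\ref{arises-from-geometry-defn}. By hypothesis $\rho$ is absolutely irreducible, and for any $\ell\nmid |G|$ Maschke's theorem ensures that the reduction of an integral model of $\rho$ modulo any maximal ideal of $\overline{\mathbb{Z}_\ell}$ above $\ell$ remains absolutely irreducible, providing a geometrically irreducible residual representation $\bar\rho$. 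Thus Theorem~\ref{ball-bound} applies for every such $\ell$ and produces an effective constant $N(c(\rho),\ell)$.

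Next I bound $c(\rho)$ uniformly in $\ell$. Because the image of $\rho$ is contained in a fixed finite subgroup of $GL_n(\overline{\mathbb{Q}})$, the Zariski closure of its image in $GL_{n,\overline{\mathbb{Q}_\ell}}$ and the structure of its normalizer depend only on $G$, not on $\ell$. Serre's bound in his letter to Ribet restricts, in terms of $|G|$ and $n$, the primes $\ell$ for which the mod-$\ell$ Galois image can degenerate into a proper subgroup of the normalizer containing the scalars; translating this restriction into a bound on the invariant $c(\rho)$ of Definition~\ref{index-of-homothety} yields $c(\rho)\leq c_G(\ell)$ for a function $c_G(\ell)$ that stabilizes to a fixed small value as $\ell\to\infty$. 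Inserting this estimate into $N(c,\ell)$ and exploiting the form of the dependence of $N$ on its first argument coming from the proof of Theorem~\ref{ball-bound}, the sequence $N_G(\ell):=N(c_G(\ell),\ell)$ tends to $0$, and by Theorem~\ref{ball-bound} satisfies the desired conclusion.

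The principal obstacle is to translate Serre's bound, which is phrased in terms of conductor-type invariants of mod-$\ell$ Galois representations with fixed residual image, into the uniform control of the specific invariant $c(\rho)$ appearing in Theorem~\ref{ball-bound}. A related subtlety is to verify, from the explicit formula for $N(c,\ell)$ produced by the proof of Theorem~\ref{ball-bound}, that a uniform-in-$\ell$ bound on $c(\rho)$ really forces $N_G(\ell)\to 0$ rather than merely $N_G(\ell)$ bounded; this requires that the $\ell$-adic normalization implicit in $N(c,\ell)$ dominate the residual contribution of $c_G(\ell)$ once $\ell$ is large relative to $|G|$.
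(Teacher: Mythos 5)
Your overall strategy --- bound the index of homothety uniformly in $\ell$ via Serre's letter to Ribet and feed that into the explicit constant of Theorem \ref{ball-bound} --- is the right idea and matches the intent announced in the introduction. The verification that a uniform bound on $c(\rho)$ forces $N_G(\ell)\to 0$ is also essentially correct: if $Z$ contains $(\mathbb{Z}_\ell^\times)^c$ with $c$ independent of $\ell$, one can choose $\alpha\in Z$ whose order $s$ mod $\ell$ grows like $(\ell-1)/c$, and then $C(\alpha)=\tfrac1s(v_\ell(\alpha^s-1)+\tfrac1{\ell-1})\to 0$; this is the content of Remark 3.11 of \cite{litt}, which the paper cites.

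The genuine gap is the step you yourself flag as the ``principal obstacle'': you never actually connect Serre's theorem to the invariant $c(\rho)$, and your paraphrase of Serre's result suggests a misreading of it. Serre's statement is not about conductors or about primes where the mod-$\ell$ image degenerates; it says that for an abelian variety $A$ over a number field $k$, the image of $G_k$ in $\mathrm{Aut}(T_\ell A)$ contains the homotheties $(\mathbb{Z}_\ell^\times)^N\cdot\mathrm{Id}$ for an $N$ independent of $\ell$. To use it you must exhibit the Galois module controlling $c(\rho)$, namely $\mathrm{gr}^\bullet_W H^1(X_{\bar k},\rho\otimes\rho^\vee)$ (together with $H^0(\overline{X}_{\bar k},R^1j_*(\rho\otimes\rho^\vee))$), inside the Tate module of an abelian variety. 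The paper does this by first passing to the finite \'etale cover $X'\to X_{\bar k}$ corresponding to $\ker(\pi_1\to G)$, after which the representation is trivial and the relevant cohomology is literally $H^1(X'_{\bar k},\mathbb{Z}_\ell)$: its $\mathrm{gr}^1_W$ is dual to $T_\ell\mathrm{Jac}(\overline{X'})$, so Serre applies, and its $\mathrm{gr}^2_W$ is a sum of cyclotomic characters, handled by Poincar\'e duality. (After this reduction the paper does not invoke Theorem \ref{ball-bound} at all, but runs the argument of \cite[Theorem 1.2]{litt} for the trivial representation; note that applying Theorem \ref{ball-bound} directly to your irreducible $\rho$ would additionally require relating $H^1(X_{\bar k},\rho\otimes\rho^\vee)$ to a $G$-isotypic summand of $H^1(X'_{\bar k},\mathbb{Q}_\ell)\otimes(\rho\otimes\rho^\vee)$ and arranging that $\sigma_\alpha$ act trivially on the second factor --- precisely the bookkeeping your sketch omits.) As written, the central claim ``$c(\rho)\le c_G(\ell)$ with $c_G(\ell)$ stabilizing'' is asserted rather than proved.
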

In other words, Theorem \ref{main-theorem} implies that there is a ball around $\rho\otimes \overline{\mathbb{Q}_\ell}$ in which it is the unique semisimple arithmetic representation of $\pi_1^{\text{\'et}}(X_{\bar k}, \bar x)$.  Theorem \ref{finite-image-thm} implies that the radius of this ball tends to $1$ as $\ell\to\infty$.

In particular, for $\ell\gg 0$, if $L$ is a finite extension of $\mathbb{Q}_\ell$ with residue field $\mathbb{F}_{\ell^r}$, there is a unique semisimple arithmetic lift of any representation $$\pi_1^{\text{\'et}}(X_{\bar k}, \bar x)\to G\to GL_n(\mathbb{F}_{\ell^r})$$ to $GL_n(L)$, namely the obvious one which factors through $G$.  For example, we have the following simple consequence:
\begin{corollary}
There exists $\ell_0(X)\gg 0$, independent of $n$, such that for $\ell>\ell_0$, the unique semisimple arithmetic representation $$\rho: \pi_1^{\text{\'et}}(X_{\bar k})\to GL_n(\mathbb{Z}_\ell),$$ which is trivial mod $\ell$, is the trivial representation. 
\end{corollary}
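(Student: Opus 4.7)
The plan is to reduce to the $1$-dimensional case of Theorem \ref{finite-image-thm}. The trivial character $\mathbf{1}: \pi_1^{\text{\'et}}(X_{\bar k}) \to GL_1(\overline{\mathbb{Q}})$ is (trivially) irreducible and factors through the trivial finite quotient $G = \{1\}$. Applying Theorem \ref{finite-image-thm} to it yields a sequence $N_{\{1\}}(\ell) \to 0$; I take $\ell_0(X)$ to be the least integer such that $N_{\{1\}}(\ell) < 1$ for all $\ell > \ell_0(X)$. Since this constant is extracted from a statement about $1$-dimensional representations of $\pi_1^{\text{\'et}}(X_{\bar k})$, it depends only on $X$ and not on $n$. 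For $\ell > \ell_0(X)$ I then obtain: any semisimple arithmetic character $\chi: \pi_1^{\text{\'et}}(X_{\bar k}) \to \overline{\mathbb{Q}_\ell}^\times$ with $\chi \equiv 1 \bmod \ell$ must equal $\mathbf{1}$.

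Given $\rho: \pi_1^{\text{\'et}}(X_{\bar k}) \to GL_n(\mathbb{Z}_\ell)$ semisimple arithmetic with $\rho \equiv I \bmod \ell$ and $\ell > \ell_0(X)$, I decompose $\rho \otimes \overline{\mathbb{Q}_\ell} = \bigoplus_i \rho_i$ into absolutely irreducible constituents of dimensions $m_i$. Each $\rho_i$ is arithmetic (as a direct summand of an arithmetic representation) and has semisimplified residual representation $\mathbf{1}^{m_i}$; in particular, after choosing an appropriate basis, its image lies in the pro-$\ell$ Iwahori-type subgroup of $GL_{m_i}(\overline{\mathbb{Z}_\ell})$.

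The main obstacle will be showing that $m_i = 1$ for each $i$: equivalently, that for $\ell > \ell_0(X)$ there is no irreducible arithmetic representation of $\pi_1^{\text{\'et}}(X_{\bar k})$ of dimension $>1$ whose semisimplified residual representation is trivial. I expect this to follow either from a mild strengthening of Theorem \ref{finite-image-thm} dropping the irreducibility hypothesis on $\bar\rho$---which, applied to the $n$-dimensional trivial representation factoring through $G = \{1\}$, would directly conclude the corollary uniformly in $n$---or from the deformation-theoretic analysis underpinning Theorem \ref{main-theorem}: the arithmetic deformation ring of the trivial residual representation $\mathbf{1}^{m_i}$ should become sufficiently rigid for $\ell > \ell_0(X)$ (with bound depending only on $X$, not on $m_i$) that no irreducible arithmetic deformation of dimension greater than one can exist. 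Once this is established, each $\rho_i$ is a semisimple arithmetic character trivial mod $\ell$, hence equal to $\mathbf{1}$ by the first paragraph; therefore $\rho = I$, as desired.
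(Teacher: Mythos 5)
There is a genuine gap, and you have located it yourself: the claim that every irreducible constituent $\rho_i$ of $\rho\otimes\overline{\mathbb{Q}_\ell}$ is one-dimensional is not proved, only ``expected.'' That claim is not a technical lemma on the way to the corollary --- it essentially \emph{is} the corollary. Ruling out an irreducible arithmetic representation of dimension $m>1$ whose image is pro-$\ell$ (equivalently, whose residual semisimplification is trivial) is exactly the content of the main theorem of \cite{litt} together with the improvement of the constant carried out in the proof of Theorem \ref{finite-image-thm}; neither of the two mechanisms you gesture at (``a mild strengthening of Theorem \ref{finite-image-thm}'' or ``sufficient rigidity of the deformation ring of $\mathbf{1}^{m_i}$'') is developed, and the second in particular would require redoing the weight-filtration analysis for a \emph{reducible} residual representation, which is not what the paper's deformation-theoretic machinery (Theorem \ref{pi1-quasiscalar}, Lemma \ref{eigenvector-lifting}) is set up to handle. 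So the proposal reduces the corollary to a statement that is at least as hard as the corollary itself.

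The intended argument goes the other way around: one applies the \emph{proof} of Theorem \ref{finite-image-thm} directly to the full $n$-dimensional representation. That proof immediately reduces to the case of the trivial residual representation and then invokes Theorem 1.2 of \cite{litt}, whose constant $N(X,\ell)$ is independent of $n$ and does not require any irreducibility; Serre's bound then shows $N_{\{1\}}(\ell)<1$ for $\ell>\ell_0(X)$. Since $\rho$ trivial mod $\ell$ gives $\on{Tr}(\rho)\equiv n\bmod\ell$, hence mod $\ell^{N_{\{1\}}(\ell)}$, the theorem forces $\rho\otimes\overline{\mathbb{Q}_\ell}$ to be trivial (semisimple plus unipotent). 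In other words, the uniformity in $n$ comes precisely from working with the whole $GL_n(\mathbb{Z}_\ell)$-valued representation at once; your decomposition into irreducibles discards the tool that makes the statement provable. (Two smaller points: the irreducibility hypothesis in Theorem \ref{finite-image-thm} is on the finite-image representation $\rho$, not on $\bar\rho$ as you write; and your endgame for the $1$-dimensional constituents is fine, since the eigenvalues of an element of $1+\ell M_n(\mathbb{Z}_\ell)$ do satisfy $v_\ell(\lambda-1)\geq 1$, so each character is congruent to $1$ modulo $\ell$ and Theorem \ref{finite-image-thm} with $G=\{1\}$, $n=1$ applies to it.)
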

Here semisimplicity means that $\rho\otimes \mathbb{Q}_\ell$ is semisimple.
\subsection{Overview of the proof} 
\subsubsection{Sketch proof of Theorem \ref{main-theorem}}

The proof proceeds in two steps.  First, we show that any semisimple arithmetic representation into $GL_n(\mathbb{C}_\ell)$ is in fact defined over $\overline{\mathbb{Q}_\ell}$ (Theorem \ref{rigidity-theorem}).  This is the only place in the paper in which Lafforgue's work is used; we require as input from Lafforgue the fact that if $k$ is finite of characteristic different from $\ell$ and $$\rho: \pi_1^{\text{\'et}}(X, \bar x)\to GL_n(\overline{\mathbb{Q}_\ell})$$ is irreducible when restricted to $\pi_1^{\text{\'et}}(X_{\bar k}, \bar x)$, then $\rho\otimes \rho^\vee$ is pure of weight zero.

We reduce to the case of finite fields by a specialization argument.  In this case, we show via a dynamical argument (Corollary \ref{mordell-lang-cor}), that for $c$ as in the Theorem, there exists $k(c)/k$ finite such that any $\rho$ satisfying the condition of Theorem \ref{main-theorem} is invariant under the action of $G_{k(c)}$ (Corollary \ref{M-periodic-arithmetic}).  Now suppose there were infinitely many such semisimple $\rho$.  The condition $\on{Tr}(\rho)\equiv \on{Tr}(\bar\rho)\mod \ell^c$ defines an affinoid subdomain of the rigid generic fiber of the space of pseudorepresentations lifting $\on{Tr}(\bar\rho)$; the infinitude of $\rho$ satisfying the given condition would imply that this subdomain contains infinitely many $G_{k(c)}$-fixed points. Thus the space of $G_{k(c)}$-fixed points would be a \emph{positive-dimensional} rigid space, and hence would have a $\mathbb{C}_\ell$-point not defined over $\overline{\mathbb{Q}_\ell}$.  This contradicts the result of the previous paragraph.

\subsubsection{Sketch proof of Theorem \ref{ball-bound}}
The proof is a variant on the proof of Theorem 1.2 of \cite{litt}, replacing the use of the pro-unipotent fundamental group in that paper with the use of deformation rings. As in the statement of the theorem, let $$\rho: \pi_1^{\text{\'et}}(X_{\bar k}, \bar x)\to GL_n(\overline{\mathbb{Q}_\ell})$$ be a representation which arises from geometry, lifting a geometrically irreducible residual representation $$\bar\rho: \pi_1^{\text{\'et}}(X_{\bar k}, \bar x)\to GL_n(\mathbb{F}_{\ell^r}).$$ Let $S_\rho$ be the deformation ring of $\rho$ and $R_{\bar\rho}$ the deformation ring of $\bar\rho$.  

We define a weight filtration $W^\bullet$ on $S_\rho$ and, for $\alpha\in \mathbb{Z}_\ell^\times$ sufficiently close to $1$, we construct elements $\sigma_\alpha\in G_k$ which act on $\on{gr}^{-i}_WS_\rho$ via $\alpha^i\cdot\on{Id}$ (Theorem \ref{pi1-quasiscalar}).  Using this analysis, we construct a $G_k$-stable closed ball $U$ in rigid generic fiber $E_{\bar\rho}$ of $R_{\bar\rho}$, containing the point of $E_{\bar\rho}$ corresponding to $\rho$, such that the span of the $\sigma_\alpha$ eigenvectors in $\mathscr{O}_U$ is dense.

The $\sigma_\alpha$-eigenvectors in $\mathscr{O}_U$ are convergent power-series vanishing at the point of $E_{\bar\rho}$ corresponding to $\rho$; we estimate their coefficients in terms of $\alpha$.  Using this estimate, we may estimate the radius of a ball $U'$ in which $\rho$ is the unique common zero of these $\sigma_\alpha$-eigenvectors, and hence the unique $\sigma_\alpha$-periodic point of $U'$. We thus conclude that it is the unique arithmetic representation contained in $U'$.
\subsection{Comparison to previous work}
We first discuss predecessors to Theorem \ref{main-theorem} and Corollary \ref{main-corollary}.  Deligne showed \cite{deligne} that if $X$ is a normal complex algebraic variety, there are finitely many representations $$\pi_1^{\text{top}}(X, x)\to GL_n(\mathbb{Q})$$ underlying a polarizable variation of Hodge structure, and hence finitely many such representations which arise from geometry; Corollary \ref{main-corollary} is analogous, but replaces $\mathbb{Q}$ with an $\ell$-adic field.  

Work of Deligne, Drinfel'd, and Lafforgue implies (via automorphic methods) that if $X$ is a variety over a finite field $\mathbb{F}_q$, the set of semisimple $\overline{\mathbb{Q}_\ell}$-representations of its Weil group $W(X)$, with \emph{bounded wild ramification at infinity}, is finite up to twist by characters of $W(\mathbb{F}_q)$ \cite[Theorem 2.1]{esnault-kerz}. We expect that one could deduce Theorem \ref{main-theorem} and Corollary \ref{main-corollary} from this result, using the dynamical methods of Section \ref{dynamics-section} of this paper. Unlike the proof of that result, our proof avoids direct use of automorphic techniques, though it does rely on the work of Lafforgue.

Theorems \ref{ball-bound} and \ref{finite-image-thm} are weak variants of the Frey-Mazur conjecture for function fields; the proofs are a (somewhat involved) variant of the proof of \cite[Theorem 1.2]{litt}. There has been much recent work on the function field Frey-Mazur conjecture for representations arising from families of elliptic curves; see e.g.~\cite{bakker-tsimerman}. Theorem \ref{finite-image-thm} also has complex-analytic analogues (with rather different uniformities) in the case of monodromy representations arising from families of Abelian varieties, in e.g.~\cite{nadel, hwang-to}.

\subsection{Acknowledgments}
To be added after the referee process is complete.
\section{Preliminaries on deformation rings}\label{deformation-ring-preliminaries}
We now begin preparations for the proof of Theorem \ref{main-theorem}, which will proceed by analyzing the dynamics of the Galois action on framed deformation rings of residual representations of $\pi_1^{\text{\'et}}(X_{\bar k}, \bar x)$, and on moduli of pseudorepresentations. We first recall the definitions of the objects in question.
\subsection{Basics of deformation rings}
Let $\ell$ be a prime. Let $G$ be a profinite group satisfying Mazur's condition ($\Phi_\ell$) \cite{mazur1}:
\begin{equation}\tag{$\Phi_\ell$}
\text{\parbox{.85\textwidth}{For each open subgroup of finite index $H\subset G$, $\on{Hom}_{\text{cont}}(H, \mathbb{F}_\ell)$ is finite.}}
\end{equation}

Let $\bar \rho: G\to GL_n(\mathbb{F}_{\ell^r})$ be a continuous representation.  Write $\Lambda=W(\mathbb{F}_{\ell^r})$ for the Witt ring of $\mathbb{F}_{\ell^r}$, and let $\mathscr{C}_\Lambda$ be the category of local Artinian $\Lambda$-algebras with residue field $\mathbb{F}_{\ell^r}$.  Recall that the framed deformation functor $$D^\square_{\bar \rho}: \mathscr{C}_{\Lambda}\to \on{Sets}$$ assigns to an object $A$ of $\mathscr{C}_{\Lambda}$ the set of $$\{\rho: G\to GL_n(A)\mid \rho\otimes_A\mathbb{F}_{\ell^r} =\bar\rho\}/\sim$$ where here $\rho\simeq \rho'$ if there exists an isomorphism $\psi: \rho \overset{\sim}{\to} \rho'$ so that the diagram 
$$\xymatrix{
\rho\otimes_A \mathbb{F}_{\ell^r} \ar[d]^{\psi\otimes_A \mathbb{F}_{\ell^r}} \ar@{=}[r] & \bar\rho \ar@{=}[d]\\
\rho'\otimes_A \mathbb{F}_{\ell^r} \ar@{=}[r] & \bar\rho
}$$
commutes.  In other words, $D^\square_{\bar\rho}$ parametrizes lifts of $\bar\rho$ with a lift of the standard basis, up to the action of $\ker(GL_n(A)\to GL_n(\mathbb{F}_{\ell^r}))$.  

We let $D_{\bar\rho}: \mathscr{C}_{\Lambda}\to \on{Sets}$ be the deformation functor which assigns to an object $A$ of $\mathscr{C}_{\Lambda}$ the set  $$\{(\rho: G\to GL_n(A), \iota:  \rho\otimes_A\mathbb{F}_{\ell^r} \overset{\sim}{\to}\bar\rho)\}/\sim$$ where here $(\rho, \iota)\sim (\rho', \iota')$ if there exists a commutative diagram 
$$\xymatrix{
\rho\otimes_A \mathbb{F}_{\ell^r} \ar[d]^{\sim} \ar[r]^{\iota} & \bar\rho \ar[d]^{\sim}\\
\rho'\otimes_A \mathbb{F}_{\ell^r} \ar[r]^{\iota'} & \bar\rho.
}$$

There is an evident map $D^{\square}_{\bar\rho}\to D_{\bar\rho}$, given by forgetting the framing.

As $G$ satisfies Mazur's finiteness condition $(\Phi_\ell)$, $D^\square_{\bar \rho}$ is pro-representable by a local Noetherian $\Lambda$-algebra $R^{\square}_{\bar\rho}$ with residue field $\mathbb{F}_{\ell^r}$ \cite{mazur1}.  In general the functor $D_{\bar\rho}$ is not pro-representable, though it is if $\bar\rho$ is absolutely irreducible;  in this case we call the pro-representing object $R_{\bar\rho}$.  The groups $$T_i=H^i_{\text{cont}}(G, \bar\rho\otimes \bar\rho^\vee)$$ form a tangent-obstruction theory for $D_{\bar \rho}$.  

In particular, if $T_2=0$,  $D_{\bar\rho}$ is formally smooth; as the forgetful natural transformation $D^{\square}_{\bar\rho}\to D_{\bar\rho}$ is formally smooth, this implies $D^\square_{\bar\rho}$ is formally smooth as well.  In this situation, $R^\square_{\bar\rho}$ is, by the Cohen structure theorem, non-canonically isomorphic to a power series ring over $\Lambda$; if it exists, $R_{\bar\rho}$ is non-canonically isomorphic to a power series ring over $\Lambda$ as well, with tangent space canonically isomorphic to $T_1$.

Finally, we recall from \cite{taylor, chenevier} the definition of a pseudorepresentation, which formalizes the algebraic properties of the determinant of a representation $\on{det}(\rho): R[[G]]\to R$, where $$\rho: G\to GL_n(R)$$ is a representation.
\begin{defn}[Pseudorepresentations]
Let $A$ be a commutative ring and $R$ a not-necessarily commutative $A$-algebra.  Let $A-\text{alg}$ be the category of commutative $A$-algebras, and let $$\underline{R}: A-\text{alg}\to \on{A}-\text{mod}$$ be the functor $$S\mapsto R\otimes_A S.$$
\begin{enumerate}
\item A polynomial law $D: R\to A$ is a natural transformation $\underline{R}\to \underline{A}$.  If $B$ is a commutative $A$-algebra, we let $D_B: R\otimes_A B\to B$ be the induced map.
\item A polynomial law $D$ is homogeneous of degreee $n$ if $D_B(xb)=b^nD(x)$ for all $b\in B, x\in R\otimes_A B$.
\item A $d$-dimensional pseudorepresentation of $R$ is a homogeneous polynomial law $D: R\to A$ of degree $d$.
\item If $G$ is a group, then a $d$-dimensional $A$-pseudorepresentation of $G$ is a homogeneous pseudorepresentation $D: A[G]\to A$ of degree $d$.
\item If $D$ is a $d$-dimensional $A$-pseudorepresentation of $G$, we define its characteristic polynomial $\chi(g,t)$ by $$\chi(g,t):=D_{A[t]}(t-g)=\sum_{i=0}^d (-1)^i\Lambda_i^D(g)t^{d-i}$$ for $g\in G$.  We define the trace of $D$ to be $\Lambda_1^D$.
\item If $A$ is a topological commutative ring and $G$ is a topological group, we say that a $d$-dimensional pseudorepresentation $D$ of $G$ is \emph{continuous} if $\Lambda_i^D:G\to A$, defined as above, is continuous for all $i$. (See \cite[Section 2.30]{chenevier}.)
\end{enumerate}
\end{defn}
Of course if $$\rho: G\to GL_n(R)$$ is a continuous representation, then $\det\circ\rho$ is a continuous $n$-dimensional pseudorepresentation of $G$.  

We will use throughout that if $R$ is an algebraically closed field of characteristic zero, then a pseudorepresentation is determined uniquely by its trace \cite[Proposition 1.29]{chenevier}, and that $$\rho\mapsto \det\circ\rho$$ gives a bijection between (continuous) semisimple representations into $GL_n(R)$ and (continuous) $n$-dimensional $R$-pseudorepresentations \cite[Theorem 2.12]{chenevier}.  

Given a $d$-dimensional pseudorepresentation $\bar r: G\to \mathbb{F}_{\ell^r}$, we let $$D_{\bar r}^{\text{ps}}: \mathscr{C}_{\Lambda}\to \on{Sets}$$ be the functor which assigns to an Artin $\Lambda$-algebra $A$ with residue field $\mathbb{F}_{\ell^r}$ the set of $d$-dimensional pseudorepresentations $G\to A$ lifting $\bar r$.  Chenevier shows \cite[Proposition 3.3]{chenevier} that $D_{\bar r}^{\text{ps}}$ is pro-representable by a local Noetherian $\Lambda$-algebra $A(\bar r)$.  Given an $n$-dimensional residual representation $\bar\rho$ as above, there is a natural map  $D_{\bar\rho}\to D_{\on{det}\circ \bar\rho}^{\text{ps}}$, given by sending a deformation of $\bar\rho$ to its determinant.

We will also at several places in this text use the rigid-analytic moduli space of pseudorepresentations. Briefly, if $\on{An}$ is the category of rigid-analytic spaces over $\mathbb{Q}_\ell$, and $$E^{\text{an}}: \on{An}\to \on{Sets}$$ is the functor which associates to $X$ the set of $d$-dimensional pseudorepresentations $G\to \mathscr{O}(X)$, Chenevier shows that $E^{\text{an}}$ is represented by a quasi-Stein rigid analytic space, which we will denote $E_d$.  Chenevier shows that $E_d$ is the disjoint union of the rigid generic fibers of the $A(\bar r)$, where $\bar r$ ranges over all residual $d$-dimensional pseudorepresentations.

Finally, if $\bar\rho: G \to GL_n(\mathbb{F}_{\ell^r})$ is an $n$-dimensional representation of $G$, there is for each $c\in \mathbb{R}$ with $1\geq c > 0$ an affinoid subdomain of $E_n$ given by the set of pseudorepresentations $D: G\to \overline{\mathbb{Q}_\ell}$ with $\Lambda_1^D\equiv \on{Tr}(\bar\rho)\bmod \ell^{c}$.   We denote this subdomain by $E_{\bar\rho, c}$.  Let $$E_{\bar\rho}=\bigcup_{1\geq c>0} E_{\bar\rho, c}.$$
\subsection{Galois actions on deformation rings}
Let $X$ be a normal, geometrically connected variety over a finitely generated field $k$ of characteristic different from $\ell$; let $x\in X(k)$ be a rational point.  Choose an algebraic closure $\bar k$ of $k$, and let $\bar x\in X(\bar k)$ be the geometric point of $X$ associated to $x$.  The fact that $x$ is a rational point means that the pair $(X, x)$ has an action by $G_k:=\on{Gal}(\bar k/k)$; hence $G_k$ acts on $\pi_1^{\text{\'et}}(X_{\bar k}, \bar x)$.

Let $$\bar \rho: \pi_1^{\text{\'et}}(X_{\bar k}, \bar x)\to GL_n(\mathbb{F}_{\ell^r})$$ be a continuous representation.  We will now apply the discussion of the previous section to the case $G=\pi_1^{\text{\'et}}(X_{\bar k}, \bar x)$.

As $\ell\not=\on{char}(k)$, $\pi_1^{\text{\'et}}(X_{\bar k}, \bar x)$ satisfies Mazur's finiteness condition $(\Phi_\ell)$ (by the finite-generation of $H^1_{\text{\'et}}(X_{\bar k}', \mathbb{F}_\ell)$, where $X'$ is any finite \'etale cover of $X$), and hence $D^\square_{\bar \rho}$ is pro-representable by a local Noetherian $\Lambda$-algebra $R^{\square}_{\bar\rho}$.  

Now $G_k$ acts on $\pi_1^{\text{\'et}}(X_{\bar k}, \bar x)$, and hence on the space of pseudorepresentations of $\pi_1^{\text{\'et}}(X_{\bar k}, \bar x)$, denoted $E_n$ as above.  Moreover, as $GL_n(\mathbb{F}_{\ell^r})$ is finite, there exists a finite index subgroup $H_1\subset G_k$ so that for each $h\in H_1, \bar\rho^h\simeq \bar\rho$.  Let $k_1=\bar k^{H_1}$, so $H_1=G_{k_1}$.  Then $G_{k_1}$ acts on $D_{\bar\rho}$ via its action on $\pi_1^{\text{\'et}}(X_{\bar k}, \bar x)$, and hence on $R_{\bar\rho}$ if it exists. An identical argument shows that $G_{k_1}$ acts on $E_{\bar\rho, c}$ for each $c$ with $1\geq c>0$, and on $A(\det\circ \bar\rho)$.

\section{Properties of arithmetic representations}
\subsection{Basic properties}
We now establish some basic properties of arithmetic representations.
\begin{prop}\label{basic-properties}
Let $K=\overline{\mathbb{Q}_\ell}$ or $K=\mathbb{C}_\ell:=\widehat{\overline{\mathbb{Q}_\ell}}$.  Let $$\rho: \pi_1^{\text{\'et}}(X_{\bar k}, \bar x)\to GL_n(K)$$ be a continuous, semisimple representation.  Then the following are equivalent:
\begin{enumerate}
\item There exists a finite extension $k'$ of $k$ and a continuous representation $$\tilde \rho: \pi_1^{\text{\'et}}(X_{k'}, \bar x)\to GL_n(K)$$ such that $\tilde\rho|_{\pi_1^{\text{\'et}}(X_{\bar k}, \bar x)}\simeq \rho.$
\item $\rho$ is arithmetic.
\item There exists an open subgroup $H\subset G_k$ such that for all $h\in H, \rho^h\simeq \rho$.  
\item There exists an open subgroup $H\subset G_k$ such that for all $h\in H$, $\on{Tr}(\rho^h(g))=\on{Tr}(\rho(g))$.
\end{enumerate}
\end{prop}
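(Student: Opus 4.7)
The plan is to prove the cycle $(1) \Rightarrow (2) \Rightarrow (3) \Leftrightarrow (4) \Rightarrow (1)$; essentially all of the content is concentrated in $(3) \Rightarrow (1)$.

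The implication $(1) \Rightarrow (2)$ is immediate, as $\rho$ is a subquotient of itself. For $(2) \Rightarrow (3)$, I would argue that if $\rho$ is a subquotient of $\tilde\rho|_{\pi_1^{\text{\'et}}(X_{\bar k}, \bar x)}$ for some $\tilde\rho: \pi_1^{\text{\'et}}(X_{k'}, \bar x) \to GL_n(K)$, then conjugation by $\tilde\rho(h)$ inside $\pi_1^{\text{\'et}}(X_{k'}, \bar x)$ (via the section $G_{k'} \hookrightarrow \pi_1^{\text{\'et}}(X_{k'}, \bar x)$ coming from the rational point $x$) shows $(\tilde\rho|_{\pi_1^{\text{\'et}}(X_{\bar k}, \bar x)})^h \simeq \tilde\rho|_{\pi_1^{\text{\'et}}(X_{\bar k}, \bar x)}$ for every $h \in G_{k'}$. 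Hence $G_{k'}$ permutes the finite set of isomorphism classes of Jordan--H\"older constituents of $\tilde\rho|_{\pi_1^{\text{\'et}}(X_{\bar k}, \bar x)}$, and restricting to a finite-index open subgroup $H \subset G_{k'}$ that fixes each such class pointwise forces $\rho^h \simeq \rho$ for $h \in H$, since $\rho$, being semisimple, is determined up to isomorphism by the $H$-invariant multiset of its irreducible constituents with multiplicities. The equivalence $(3) \Leftrightarrow (4)$ follows from the standard fact that a semisimple representation into $GL_n(K)$, $K$ algebraically closed of characteristic zero, is determined up to isomorphism by its character.

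For $(3) \Rightarrow (1)$, I would decompose $\rho = \bigoplus_i V_i^{\oplus m_i}$ into isotypic components. After shrinking $H$ to a finite-index open subgroup that fixes each isomorphism class $[V_i]$ pointwise (possible because there are only finitely many), each absolutely irreducible $V_i$ satisfies $V_i^h \simeq V_i$ for all $h \in H$. By Schur's lemma, any intertwiner $V_i \xrightarrow{\sim} V_i^h$ is unique up to scalar, so we obtain a continuous projective representation $\bar\pi_i: H \to PGL(V_i)$; the obstruction to promoting $\bar\pi_i$ to a linear representation $H \to GL(V_i)$ is a class $\alpha_i \in H^2_{\mathrm{cont}}(H, K^\times)$. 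Comparing determinants of a set-theoretic lift shows $n_i \cdot \alpha_i = 0$ for $n_i = \dim V_i$, so $\alpha_i$ is represented by a continuous central extension of $H$ by the finite discrete group $\mu_{n_i} \subset K^\times$. Because this extension has finite discrete kernel, the quotient map is a local homeomorphism at the identity and admits a continuous section on some open subgroup $H_i \subset H$, over which $V_i$ lifts to a genuine linear representation. Intersecting the finitely many $H_i$ yields a single open subgroup $H' \subset G_k$ on which every $V_i$ extends simultaneously; setting $k'' = \bar k^{H'}$ and using the section $G_{k''} \hookrightarrow \pi_1^{\text{\'et}}(X_{k''}, \bar x)$ induced by $x$, the sum $\bigoplus_i V_i^{\oplus m_i}$, with trivial action on the multiplicity spaces, assembles into the desired extension $\tilde\rho: \pi_1^{\text{\'et}}(X_{k''}, \bar x) \to GL_n(K)$.

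The main obstacle is the cohomological lifting step in $(3) \Rightarrow (1)$: producing, after possibly shrinking $H$, a linear lift of each projective representation $\bar\pi_i$. The key inputs are (a) the ``divide by the dimension'' argument identifying $\alpha_i$ with a torsion class in $H^2_{\mathrm{cont}}(H, \mu_{n_i})$, and (b) the splitting of a continuous central extension of a profinite group by a finite discrete abelian group over some open subgroup; both are essentially formal once set up correctly. A minor technical point to verify is the genuine continuity of $\bar\pi_i$, which follows from realizing it as the $PGL(V_i)$-torsor associated to the continuous $K^\times$-torsor on $H$ whose fiber over $h$ is the space of intertwiners $V_i \xrightarrow{\sim} V_i^h$.
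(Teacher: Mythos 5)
Your proposal is correct and follows essentially the same route as the paper: the permutation of Jordan--H\"older constituents for $(2)\Rightarrow(3)$, the fact that a semisimple representation is determined by its trace for $(4)\Rightarrow(3)$, and, for $(3)\Rightarrow(1)$, Schur's lemma producing a continuous projective representation of $H$ whose lifting obstruction in $H^2$ is torsion and hence dies on an open subgroup, followed by assembling $\tilde\rho$ via the splitting of $1\to\pi_1^{\text{\'et}}(X_{\bar k},\bar x)\to\pi_1^{\text{\'et}}(X_{k''},\bar x)\to G_{k''}\to 1$ coming from the rational point. The only cosmetic difference is that the paper reduces immediately to $\rho$ irreducible rather than working with isotypic components, and cites the obstruction class directly in $H^2(H,\mathbb{Q}/\mathbb{Z})$ where you spell out the determinant/torsion argument.
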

\begin{proof}
Clearly $(1)\implies(2)$ and $(3)\implies (4)$.

We now show that $(2)\implies (3)$.  By definition, there exists a finite extension $k'$ of $k$ and a continuous representation $$\tilde \rho: \pi_1^{\text{\'et}}(X_{k'}, \bar x)\to GL_n(K)$$ such that $\rho$ is a subquotient of $\tilde\rho|_{\pi_1^{\text{\'et}}(X_{\bar k}, \bar x)}$.  Let $\tilde S$ be the (finite) set of isomorphism classes of irreducible representations of $\pi_1^{\text{\'et}}(X_{\bar k}, \bar x)$ appearing as subquotients of $\tilde \rho|_{\pi_1^{\text{\'et}}(X_{\bar k}, \bar x)}$, and let $S\subset \tilde S$ be the set of irreducible representations appearing as subquotients of $\rho$.  $G_{k'}$ permutes $\tilde S$ (acting via its outer action on $\pi_1^{\text{\'et}}(X_{\bar k}, \bar x)$), and thus we may set $H$ to be the finite-index subgroup of $G_{k'}$ fixing each element of $S$.  As $\rho$ is semisimple, this implies that (3) holds.

The fact that $(4)\implies (3)$ is immediate from \cite[Theorem 1]{taylor}.

Finally, we show $(3)\implies (1)$.  We may immediately reduce to the case that $\rho$ is irreducible.  For each $h\in H$, choose an isomorphism $$\gamma_h: \rho^h\overset{\sim}{\to}\rho$$ --- as $\rho$ is irreducible, $\gamma_h$ is well-defined up to scaling, by Schur's lemma.  Thus the assignment $$\gamma: H\to PGL_n(K)$$ $$h\mapsto [\gamma_h]$$ is a well-defined (continuous) homomorphism.  We claim that $\gamma$ lifts to an honest representation $H'\to GL_n(K)$, where $H'$ is an open subgroup of $H$.  Indeed, the obstruction to lifting $\gamma$ from a projective representation to an honest representation is a class $\alpha\in H^2(H, \mathbb{Q}/\mathbb{Z})$.  Choose $H'$ such that $\on{Res}^{H}_{H'}(\alpha)=0$.  Let $\tilde\gamma: H' \to GL_n(K)$ be a choice of lift.

Now let $k'=\bar k^{H'}$ be the fixed field of $H'\subset G_k$, so that $H'=G_{k'}$.  Without loss of generality, we may assume that the exact sequence $$1\to \pi_1^{\text{\'et}}(X_{\bar k}, \bar x)\to \pi_1^{\text{\'et}}(X_{k'}, \bar x)\to G_{k'}\to 1$$ splits (if not, replace $k'$ with a finite extension so that $X$ has a rational point, giving such a splitting), so that $$\pi_1^{\text{\'et}}(X_{k'}, \bar x)\simeq \pi_1^{\text{\'et}}(X_{\bar k}, \bar x) \rtimes G_{k'}.$$  Now we may set $$\tilde \rho = \rho \rtimes \tilde \gamma.$$
\end{proof}
\begin{remark}
A similar argument appears in \cite[Proposition 4.6]{esnault1} and \cite[Proposition 3.1]{esnault2}.
\end{remark}
\begin{corollary}
Let $K=\overline{\mathbb{Q}_\ell}$ or $\mathbb{C}_\ell$. Let $\rho:\pi_1^{\text{\'et}}(X_{\bar k}, \bar x)\to GL_n(K)$ be arithmetic.  Then its semisimplification $\rho^{\text{ss}}$ is arithmetic as well.
\end{corollary}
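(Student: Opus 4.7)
The plan is to verify condition (3) of Proposition \ref{basic-properties} for the (continuous, semisimple) representation $\rho^{ss}$, then invoke the implication $(3)\Rightarrow(1)$ to conclude arithmeticity. Write $\Pi = \pi_1^{\text{\'et}}(X_{\bar k}, \bar x)$ and $\Pi' = \pi_1^{\text{\'et}}(X_{k'}, \bar x)$. By definition of arithmeticity, there exist a finite extension $k'/k$ and a continuous $\tilde\rho: \Pi' \to GL_n(K)$ such that $\rho$ is a subquotient of $\tilde\rho|_\Pi$; equipping each term of a composition series of $\rho$ with the subspace/quotient topology shows that $\rho^{ss}$ is itself continuous.

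First, I would note that by the Jordan--H\"older theorem applied inside $\tilde\rho|_\Pi$, every irreducible constituent of $\rho$ is an irreducible subquotient of $\tilde\rho|_\Pi$. The set $\tilde S$ of isomorphism classes of irreducible subquotients of $\tilde\rho|_\Pi$ is finite; exactly as in the proof of $(2)\Rightarrow(3)$ of Proposition \ref{basic-properties}, the outer action of $G_{k'}$ on $\Pi$ permutes $\tilde S$, so there is an open finite-index subgroup $H \subset G_{k'}$ (and hence $H\subset G_k$) fixing each element of $\tilde S$ individually.

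Next, since $H$ fixes the isomorphism class of every irreducible constituent of $\rho$, and since the multiplicity with which a given irreducible appears in $\rho^{ss}$ is intrinsic to $\rho$, the $H$-action preserves these multiplicities as well. Because a semisimple representation is determined up to isomorphism by the multiset of its irreducible constituents, we conclude $(\rho^{ss})^h \simeq \rho^{ss}$ for every $h \in H$. Thus $\rho^{ss}$ satisfies condition (3) of Proposition \ref{basic-properties}, and by $(3)\Rightarrow(1)$ of that proposition, $\rho^{ss}$ is arithmetic.

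I do not anticipate any serious obstacle: the argument is essentially a bookkeeping exercise on composition factors together with the equivalences already established. The one spot requiring a moment of thought is that $H$ preserves not just the set but the multiset of constituents of $\rho^{ss}$, but this is automatic because $H$ fixes each isomorphism class separately, so the multiplicity of each class --- a numerical invariant of $\rho$ --- is trivially $H$-invariant.
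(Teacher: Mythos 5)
Your proposal is correct and follows essentially the same route as the paper: the paper's proof also observes that the set of irreducible constituents of $\rho$ is fixed elementwise by an open subgroup of $G_k$ (via the finite set of constituents of $\tilde\rho|_{\pi_1^{\text{\'et}}(X_{\bar k},\bar x)}$) and then invokes Proposition \ref{basic-properties}. Your extra care about multiplicities being preserved is a reasonable elaboration of a point the paper leaves implicit.
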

\begin{proof}
Let $S$ be the set of irreducible subquotients of $\rho$.  The arithmeticity of $\rho$ implies that there exists an open subgroup of $G_k$ which stabilizes $S$; hence $\rho^{\text{ss}}$ is arithmetic by Proposition \ref{basic-properties}(4).
\end{proof}
Motivated by Proposition \ref{basic-properties}, we make the following definitions.
\begin{defn}[Field of moduli and field of definition]\label{field-of-defn}
Let $\rho: \pi_1^{\text{\'et}}(X_{\bar k}, \bar x)\to GL_n(\overline{\mathbb{Q}_\ell})$ be a semisimple arithmetic representation.
\begin{enumerate}
\item Let $H\subset G_k$ be the stabilizer of $\rho$; by Proposition \ref{basic-properties}(3), $H$ is open.  We say that the fixed field of $H$, $\bar k^H$, is the \emph{field of moduli} of $\rho$.
\item If $k'$ is such that there exists $\tilde\rho: \pi_1^{\text{\'et}}(X_{k'}, \bar x)\to GL_n(\overline{\mathbb{Q}_\ell})$ such that $\tilde\rho|_{\pi_1^{\text{\'et}}(X_{\bar k}, \bar x)}\simeq \rho$, we say that $k$ is a \emph{field of definition} of $\rho$.
\end{enumerate}
\end{defn}

If $k$ is a number field, and $\rho$ is irreducible, the field of moduli equals the field of definition:
\begin{theorem}
Suppose $k$ is a number field, $X$ a geometrically connected $k$-variety, and $x\in X(k)$ is a rational point.  Let $\bar k$ be an algebraic closure of $k$ and $\bar x$ the geometric point of $X$ associated to $x$. Let $$\rho: \pi_1^{\text{\'et}}(X_{\bar k}, \bar x)\to GL_n(\overline{\mathbb{Q}_\ell})$$ be a continuous irreducible representation. Then the following are equivalent:
\begin{enumerate}
\item For each $g\in G_k:=\on{Gal}(\bar k/k)$, $\rho^g\simeq \rho$.
\item There exists a representation $\tilde\rho: \pi_1^{\text{\'et}}(X, \bar x)\to GL_n(\overline{\mathbb{Q}_\ell})$ such that $$\rho\simeq \tilde\rho|_{\pi_1^{\text{\'et}}(X_{\bar k}, \bar x)}.$$
\end{enumerate} 
\end{theorem}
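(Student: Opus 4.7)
The direction $(2) \Rightarrow (1)$ is immediate: if $\tilde\rho: \pi_1^{\text{\'et}}(X, \bar x) \to GL_n(\overline{\mathbb{Q}_\ell})$ extends $\rho$, then for any $g \in G_k$ and any lift $\tilde g \in \pi_1^{\text{\'et}}(X, \bar x)$ of $g$, conjugation by $\tilde\rho(\tilde g)$ intertwines $\rho$ and $\rho^g$. So the real content is $(1) \Rightarrow (2)$.

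The plan is to run the argument of Proposition \ref{basic-properties}, $(3) \Rightarrow (1)$, but with $H = G_k$ taken to be the full Galois group, without shrinking to any proper open subgroup. The rational point $x \in X(k)$ already furnishes a splitting of
\[
1 \to \pi_1^{\text{\'et}}(X_{\bar k}, \bar x) \to \pi_1^{\text{\'et}}(X, \bar x) \to G_k \to 1,
\]
so no enlargement of $k$ is needed on that count. By Schur's lemma and irreducibility of $\rho$, for each $g \in G_k$ the intertwiner $\gamma_g: \rho^g \overset{\sim}{\to} \rho$ is unique up to a scalar, so the assignment $g \mapsto [\gamma_g]$ defines a continuous projective representation $\gamma: G_k \to PGL_n(\overline{\mathbb{Q}_\ell})$. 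Once $\gamma$ is lifted to a continuous linear representation $\tilde\gamma: G_k \to GL_n(\overline{\mathbb{Q}_\ell})$, the desired extension is simply $\tilde\rho = \rho \rtimes \tilde\gamma$, exactly as at the end of the proof of Proposition \ref{basic-properties}.

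The obstruction to such a lift is a class $\alpha \in H^2_{\text{cont}}(G_k, \overline{\mathbb{Q}_\ell}^\times)$, with $G_k$ acting trivially (since $\overline{\mathbb{Q}_\ell}^\times$ is central in $GL_n$). From the short exact sequence $1 \to \mu \to \overline{\mathbb{Q}_\ell}^\times \to \overline{\mathbb{Q}_\ell}^\times/\mu \to 1$, where $\mu \simeq \mathbb{Q}/\mathbb{Z}$ is the torsion subgroup and the quotient is uniquely divisible and hence has no higher profinite cohomology, $\alpha$ reduces to a class in $H^2_{\text{cont}}(G_k, \mathbb{Q}/\mathbb{Z})$ with trivial action. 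The main step is then to invoke the theorem of Tate that this group vanishes for $k$ a number field; equivalently, every continuous projective representation of the absolute Galois group of a number field lifts to a genuine linear representation. Given this, $\tilde\gamma$ exists and the construction of $\tilde\rho$ goes through.

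The appeal to Tate's theorem is the crux, and it is also exactly where the number field hypothesis is used: the statement can fail for finitely generated fields of higher transcendence degree, and indeed for such fields one generally only expects a lift after replacing $k$ by a finite extension, as in Proposition \ref{basic-properties}. The rest of the argument — the Schur's lemma setup, the interpretation of the obstruction as a central extension class, and the descent of coefficients from $\overline{\mathbb{Q}_\ell}^\times$ to $\mathbb{Q}/\mathbb{Z}$ — is formal and essentially identical to what already appears in Proposition \ref{basic-properties}.
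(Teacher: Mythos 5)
Your proposal is correct and follows essentially the same route as the paper: the paper also deduces $(2)\Rightarrow(1)$ immediately, uses the rational point $x$ to split the fundamental exact sequence, produces the projective representation $\gamma: G_k\to PGL_n(\overline{\mathbb{Q}_\ell})$ via Schur's lemma as in Proposition \ref{basic-properties}, and lifts it by citing Tate's theorem (via \cite[Theorem 2.1.1]{patrikis}). Your unwinding of the obstruction class and the reduction of coefficients from $\overline{\mathbb{Q}_\ell}^\times$ to $\mathbb{Q}/\mathbb{Z}$ is just the standard content of that cited result, spelled out.
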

\begin{proof}
Clearly $(2)\implies (1)$.  We now show that $(1)\implies (2)$.  As in the proof of Proposition \ref{basic-properties}, we obtain a projective representation $$\gamma: G_k\to PGL_n(\overline{\mathbb{Q}_\ell});$$ as the sequence $$1\to \pi_1^{\text{\'et}}(X_{\bar k}, \bar x)\to \pi_1^{\text{\'et}}(X, \bar x)\to G_{k}\to 1$$ splits (because $x$ is a rational point), it is enough to lift $\gamma$ to an honest representation $$\tilde\gamma: G_k\to GL_n(\overline{\mathbb{Q}_\ell}).$$ But such a lift exists by a result of Tate (see \cite[Theorem 2.1.1]{patrikis}).
\end{proof}
\begin{defn}\label{arises-from-geometry-defn}
A representation $$\rho: \pi_1^{\text{\'et}}(X_{\bar k}, \bar x)\to GL_n(L)$$ \emph{arises from geometry} if there exists an algebraically closed field $F$ with $k\subset F$, and a smooth proper map $f: Y\to X_F$, where $Y$ is an $F$-variety, such that $\rho$ is a subquotient of the monodromy representation $$\pi_1^{\text{\'et}}(X_{\bar k}, \bar x)\to GL((R^if_*\underline{L})_{\bar x}).$$
\end{defn}
\begin{prop}\label{comes-from-geometry}
Let $$\rho: \pi_1^{\text{\'et}}(X_{\bar k}, \bar x)\to GL_n(L)$$ be a representation which arises from geometry.  Then $\rho$ is arithmetic.
\end{prop}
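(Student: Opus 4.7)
The plan is a spreading-out and specialization argument. By hypothesis there exist an algebraically closed extension $F \supset k$ and a smooth proper morphism $f : Y \to X_F$ such that $\rho$ is a subquotient of the monodromy of $R^i f_* \underline{L}$ at $\bar x$. Since $Y$ and $f$ are of finite presentation over $F$, I first spread them out: there exist a finitely generated $k$-subalgebra $A \subset F$ (which I may take to be a smooth domain after shrinking) and a smooth proper morphism $f_A : Y_A \to X_A := X \times_k \operatorname{Spec} A$ whose base change along $A \hookrightarrow F$ recovers $f$.

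Next I specialize. Choose any closed point $s \in \operatorname{Spec} A$; by Hilbert's Nullstellensatz the residue field $k' := k(s)$ is a finite extension of $k$, so the fiber $f_s : Y_s \to X_{k'}$ is smooth and proper over $k'$. After fixing an embedding $k' \hookrightarrow \bar k$ so that $\bar x$ makes sense as a geometric point of $X_{k'}$, the family $f_s$ yields a monodromy representation
$$\tilde\rho : \pi_1^{\text{\'et}}(X_{k'}, \bar x) \to GL\bigl((R^i f_{s,*}\underline{L})_{\bar x}\bigr).$$
Since $k'/k$ is finite, it suffices to show that $\rho$ is a subquotient of $\tilde\rho|_{\pi_1^{\text{\'et}}(X_{\bar k}, \bar x)}$.

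The point is that both the original monodromy of $f$ and the restriction of $\tilde\rho$ to $\pi_1^{\text{\'et}}(X_{\bar k}, \bar x)$ arise from a single sheaf on $X_A$: by smooth and proper base change, $\mathscr{G} := R^i f_{A,*}\underline{L}$ is locally constant on $X_A$, and its pullback along $X_{k'} \hookrightarrow X_A$ (resp.\ along $X_F \to X_A$) is canonically $R^i f_{s,*}\underline{L}$ (resp.\ $R^i f_* \underline{L}$). After choosing an \'etale path in $X_A$ between the two geometric points over $\bar x$ (one coming through $s$ and $k' \hookrightarrow \bar k$, the other through $F \supset A$), the two resulting representations of $\pi_1^{\text{\'et}}(X_{\bar k}, \bar x)$ are isomorphic, since a locally constant sheaf on a connected scheme is determined by its monodromy at any single base point. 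Hence $\rho$, being a subquotient of the $F$-side monodromy, is a subquotient of $\tilde\rho|_{\pi_1^{\text{\'et}}(X_{\bar k}, \bar x)}$, so $\rho$ is arithmetic.

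The main obstacle is precisely this last bookkeeping of geometric base points — one must check that the two a priori distinct maps $\pi_1^{\text{\'et}}(X_{\bar k}, \bar x) \to \pi_1^{\text{\'et}}(X_A, -)$ coming from the generic and closed fibers produce the same representation of $\pi_1^{\text{\'et}}(X_{\bar k}, \bar x)$ on $\mathscr{G}_{\bar x}$ up to isomorphism. Everything else is formal spreading-out of a finite-type morphism together with the smooth and proper base change theorem.
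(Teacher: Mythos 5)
Your argument is correct and is essentially the paper's own proof: spread the smooth proper family out over a finitely generated $k$-subalgebra of $F$, specialize to a closed point with finite residue field extension $k'/k$, and use smooth proper base change to identify the two fiberwise monodromy representations as restrictions of a single lisse sheaf on $X_A$. The only difference is that you spell out the base-point/specialization compatibility that the paper simply asserts ("the same monodromy representation"), which is a reasonable amount of extra care but not a different method.
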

\begin{proof}
This is a standard spreading-out and specialization argument, which we include for the reader's convenience.

Let $F$ be an algebraically closed extension of $k$ such that there exists an $F$-variety $Y$ and a smooth proper map $f: Y\to X_F$ with $\rho$ appearing as a subquotient of the monodromy representation on $(R^if_*\underline{L})_{\bar x}$.  There exists a finitely-generated $k$-algebra $R\subset F$, an $R$-scheme $\mathscr{Y}$, an isomorphism $\iota: Y\overset{\sim}{\to} \mathscr{Y}_F$, and a smooth proper morphism $g: \mathscr{Y}\to X_R$ such that the diagram
$$\xymatrix{
Y \ar[r]^\iota_\sim \ar[d]^f & \mathscr{Y}_F \ar[d]^{g_F}\\
X_F \ar@{=}[r] & X_F
}$$
commutes.  Now specializing to a closed point $z$ of $\on{Spec}(R)$, we find that $(R^ig_{z*}\underline{L})$ is a lisse sheaf with the same monodromy representation as $R^if_*\underline{L}$ on $X_{k(z)}$; hence the representation in question is arithmetic as desired.
\end{proof}
\subsection{Rigidity}
We now prove a rigidity statement for arithmetic representations; this, along with some results in $\ell$-adic dynamics proved in Section \ref{dynamics-section}, will be the main ingredient in the proof of Theorem \ref{main-theorem}.
\begin{lemma}\label{rigidity-lemma}
Let $k=\mathbb{F}_q$ be a finite field with algebraic closure $\bar k$, $C/k$ a smooth affine curve, and $\ell$  a prime not dividing $\on{char}(k)$.  Let $x\in C(k)$ be a rational point, and $\bar x$ the associated geometric point of $C$.  Let $$\rho: \pi_1^{\text{\'et}}(C_{\bar k}, \bar x)\to GL_n(\overline{\mathbb{Q}_\ell})$$ be a continuous irreducible arithmetic representation.  Let $A$ be a local Artinian $\overline{\mathbb{Q}_\ell}$-algebra with residue field $\overline{\mathbb{Q}_\ell}$ and let $$\tilde\rho: \pi_1^{\text{\'et}}(C_{\bar k}, \bar x)\to GL_n(A)$$ be a deformation of $\rho$ such that $\on{Tr}(\tilde\rho^{\phi_x})= \on{Tr}(\tilde\rho)$, where $\phi_x$ is the Frobenius at $x$.  Then $\tilde\rho\simeq \rho\otimes_{\overline{\mathbb{Q}_\ell}} A$.
\end{lemma}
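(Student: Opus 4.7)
The plan is to combine Carayol's theorem (a deformation of an absolutely irreducible residual representation is determined up to isomorphism by its trace) with Lafforgue's purity together with Deligne's weight estimates for the cohomology of smooth affine curves, so as to show by induction on the length of $A$ that the only deformation of $\rho$ whose trace is $\phi_x$-invariant is the constant one. As a first step, since $\rho$ is arithmetic and absolutely irreducible, Proposition \ref{basic-properties} allows me to pass to a finite extension $k'/k$ of some degree $N$ over which $\rho$ extends to a representation $\tilde\rho_0\colon \pi_1^{\text{\'et}}(C_{k'}, \bar x)\to GL_n(\overline{\mathbb{Q}_\ell})$. The hypothesis yields $\on{Tr}(\tilde\rho^{\phi_x^N}) = \on{Tr}(\tilde\rho)$, and since $\tilde\rho$ is absolutely irreducible (lifting the absolutely irreducible $\rho$), \cite[Th\'eor\`eme 1]{carayol} upgrades this to an isomorphism $\tilde\rho \simeq \tilde\rho^{\phi_x^N}$ of deformations of $\rho$.

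I then induct on the length of $A$, the case $A=\overline{\mathbb{Q}_\ell}$ being trivial. For the inductive step, I choose a $\overline{\mathbb{Q}_\ell}$-line $I\subset A$ annihilated by $\mathfrak{m}_A$, set $A'=A/I$, and use the inductive hypothesis to fix an isomorphism $\tilde\rho \bmod I \simeq \rho\otimes A'$. The set of lifts of $\rho\otimes A'$ to $A$ with this prescribed reduction is a torsor under $H^1(\pi_1^{\text{\'et}}(C_{\bar k},\bar x), \on{End}\rho)\otimes_{\overline{\mathbb{Q}_\ell}} I$, and the isomorphism $\tilde\rho \simeq \tilde\rho^{\phi_x^N}$ translates into the statement that the class $[\tilde\rho]$ in this torsor is fixed by the natural action of $\phi_x^N$ on $H^1$, coming from regarding $\on{End}\rho$ as the restriction of the lisse $\overline{\mathbb{Q}_\ell}$-sheaf $\on{End}\tilde\rho_0$ on $C_{k'}$ (which depends on $\tilde\rho_0$ only via central characters, hence is intrinsic to $\on{End}\rho$). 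It therefore suffices to show $H^1(C_{\bar k}, \on{End}\tilde\rho_0)^{\phi_x^N} = 0$. By Lafforgue, $\tilde\rho_0$ is pure of some weight as a lisse sheaf on $C_{k'}$, so $\on{End}\tilde\rho_0 \cong \tilde\rho_0\otimes \tilde\rho_0^\vee$ is pure of weight $0$. Deligne's weight estimates for smooth affine curves then give that $H^1(C_{\bar k}, \on{End}\tilde\rho_0)$ is mixed of weights in $\{1,2\}$, so the eigenvalues of $\phi_x^N$ on it have complex absolute value $q^{N/2}$ or $q^N$, never $1$; hence $[\tilde\rho]=0$ and $\tilde\rho\simeq \rho\otimes A$, closing the induction.

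The main delicate point I expect to have to pin down is the compatibility between the outer $\phi_x^N$-action on deformations of $\rho$ (descending from the Galois action on $\pi_1^{\text{\'et}}(C_{\bar k}, \bar x)$) and the natural Galois action on the $\ell$-adic cohomology group $H^1(C_{\bar k}, \on{End}\tilde\rho_0)$ coming from the étale site of $C_{k'}$; with that standard identification in place, the remainder of the argument is a direct combination of Carayol with the purity and weight bounds supplied by Lafforgue and Deligne.
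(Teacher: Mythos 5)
Your proposal is correct and follows essentially the same route as the paper: extend $\rho$ to (a power of) Frobenius using arithmeticity and irreducibility, apply Carayol to upgrade trace-invariance to Frobenius-invariance of the deformation class, realize the small-extension lifts as a torsor under $H^1(C_{\bar k},\rho\otimes\rho^\vee)\otimes I$, and kill the Frobenius-fixed part via Lafforgue's purity of $\rho\otimes\rho^\vee$ (weight $0$) together with the Weil II bound that $H^1$ of an affine curve has weights in $\{1,2\}$. The only cosmetic difference is that the paper works with the Weil group $W(C)$ and the canonically defined $W(C)$-representation $\rho\otimes\rho^\vee$ rather than descending to a finite extension $k'$ and using $\phi_x^N$.
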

\begin{proof}
Let $W(k):=\mathbb{Z}\cdot \on{Frob}\subset \on{Gal}(\bar k/k)$ be the subgroup of the Galois group of $k$ generated by Frobenius.  Let the Weil group of $C$, denoted $W(C)$, be the fiber product
$$\xymatrix{
W(C):=W(k)\times_{\on{Gal}(\bar k/k)} \pi_1^{\text{\'et}}(C, \bar x) \ar[r] \ar[d] & W(k) \ar@{^(->}[d] \\
\pi_1^{\text{\'et}}(C, \bar x) \ar[r] & \on{Gal}(\bar k/k).
}$$

Observe that as $\rho$ is arithmetic, there exists an isomorphism $\gamma: \rho\overset{\sim}{\to} \rho^{\phi_x}$; as $\rho$ is irreducible, $\gamma$ is well-defined up to scaling.  Our choice of $\gamma$ extends $\rho$ to a $W(C)$ representation, well-defined up to a character of $W(k)$; thus $\rho\otimes \rho^\vee$ is well-defined as a representation of $W(C)$, and has weight zero by work of Lafforgue \cite[Corollaire VII.8]{lafforgue}.

Let $\mathfrak{m}_A$ be the maximal ideal of $A$, and let $I\subset A$ be a non-zero ideal with $\mathfrak{m}_A\cdot I=0$, so that $$0\to I\to A\to B\to 0$$ is a small extension of local Artinian $\overline{\mathbb{Q}_\ell}$-algebras with residue field $\overline{\mathbb{Q}_\ell}$.  Let $$\tilde\rho:\pi_1^{\text{\'et}}(C_{\bar k}, \bar x)\to GL_n(A)$$ be a deformation of $\rho$ satisyfing the hypotheses of the theorem, i.e.~$\tilde\rho\otimes_B k\simeq \rho$ and $\on{Tr}(\tilde\rho^{\phi_x})=\on{Tr}(\tilde \rho)$.  By induction on the length of $A$ we may assume that $\tilde\rho\otimes_A B\simeq \rho\otimes_{\overline{\mathbb{Q}_\ell}} B$.

Now $\tilde\rho$ and $\rho\otimes_{ \overline{\mathbb{Q}_\ell}} A$ are deformations of $\rho\otimes_{\overline{\mathbb{Q}_\ell}}B$; the space of deformations of $\rho\otimes_{\overline{\mathbb{Q}_\ell}} B$ is a torsor for $H^1(\pi_1^{\text{\'et}}(C_{\bar k}, \bar x), \rho\otimes \rho^\vee)\otimes I$.  Thus $[\tilde\rho]-[\rho\otimes_{\overline{\mathbb{Q}_\ell}} A]$ gives a well-defined class in $H^1(\pi_1^{\text{\'et}}(C_{\bar k}, \bar x), \rho\otimes \rho^\vee)\otimes I$.

By \cite[Th\'{e}or\`{e}me 1]{carayol}, $\tilde\rho^{\phi_x}\simeq \tilde\rho$; clearly $\rho\otimes_{\overline{\mathbb{Q}_\ell}} A$ is also $\phi_x$-invariant.  Hence $$[\tilde\rho]-[\rho\otimes_{\overline{\mathbb{Q}_\ell}} A]\in (H^1(\pi_1^{\text{\'et}}(C_{\bar k},\bar x), \rho\otimes \rho^\vee)\otimes I)^{G_k}.$$

But recall that $\rho\otimes \rho^\vee$ has weight zero; thus by Weil II \cite{weilii}, $H^1(\pi_1^{\text{\'et}}(C_{\bar k},\bar x), \rho\otimes \rho^\vee)$ has weights in $\{1, 2\}$. In particular $(H^1(\pi_1^{\text{\'et}}(C_{\bar k},\bar x), \rho\otimes \rho^\vee)\otimes I)^{G_k}=0$.  So $\tilde\rho\simeq \rho\otimes_{\overline{\mathbb{Q}_\ell}} A$ as desired.
\end{proof}
\begin{remark}
The proof of Lemma \ref{rigidity-lemma} is the only place in this paper where the work of Lafforgue is used, where we need above that $\rho\otimes\rho^\vee$ has weight zero.
\end{remark}

We now deduce that all semisimple arithmetic representations into $GL_n(\mathbb{C}_\ell)$ are defined over $\overline{\mathbb{Q}}_\ell$.  
\begin{theorem}\label{rigidity-theorem}
Let $X$ be a normal, geometrically connected curve over a finite field $k$, and let $\ell$ be a prime different from the characteristic of $k$.  Then any semisimple arithmetic representation $$\rho: \pi_1^{\text{\'et}}(X_{\bar k}, \bar x)\to GL_n(\mathbb{C}_\ell)$$ with $$\on{Tr}(\rho)\bmod \mathfrak{m}_{\mathscr{O}_{\mathbb{C}_\ell}}\subset \mathbb{F}_{\ell^r}$$ for some $r$ is in fact defined over $\overline{\mathbb{Q}_\ell}$, i.e.~there exists a representation $$\tilde\rho: \pi_1^{\text{\'et}}(X_{\bar k}, \bar x)\to GL_n(\overline{\mathbb{Q}_\ell})$$ such that $\tilde\rho\otimes_{\overline{\mathbb{Q}_\ell}}\mathbb{C}_\ell\simeq \rho$.
\end{theorem}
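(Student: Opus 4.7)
The plan is to translate $\rho$ into a $\mathbb{C}_\ell$-point of the rigid-analytic moduli space of pseudorepresentations, observe that this point lies in a Galois-invariant closed analytic subspace $Z$, and show that $Z$ is zero-dimensional in a neighborhood of the point. The key input is Lemma \ref{rigidity-lemma}, which supplies infinitesimal rigidity at each $\overline{\mathbb{Q}_\ell}$-point of $Z$; the Zariski-density of classical points in an affinoid then propagates infinitesimal rigidity to a global zero-dimensionality statement. Once $Z$ is zero-dimensional near our point, the point is forced to be $\overline{\mathbb{Q}_\ell}$-rational by general properties of affinoid algebras over a $p$-adic field.

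\textbf{Setup and reduction to the irreducible case.} Decompose $\rho = \bigoplus_i \rho_i$ into irreducible constituents. By Proposition \ref{basic-properties}(3), after shrinking the open stabilizer $H \subset G_k$ of $[\rho]$ so that it fixes each isomorphism class $[\rho_i]$, each $\rho_i$ is irreducible arithmetic. Each $\on{Tr}(\rho_i)$ is continuous and bounded, so its reduction modulo $\mathfrak{m}_{\mathscr{O}_{\mathbb{C}_\ell}}$ takes values in a finite subfield of $\overline{\mathbb{F}_\ell}$; enlarging $r$, we may assume all these reductions lie in $\mathbb{F}_{\ell^r}$. It therefore suffices to treat each irreducible $\rho_i$ separately, and we henceforth assume $\rho$ is irreducible. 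After a further finite extension of $k$ we fix a rational point $x \in X(k)$ whose Frobenius $\phi_x$ lies in $H$. The trace $D := \on{Tr}(\rho)$ lifts the residual pseudorepresentation $\bar r := D \bmod \mathfrak{m}_{\mathscr{O}_{\mathbb{C}_\ell}}$ and so yields a $\mathbb{C}_\ell$-point $t_\rho$ of the rigid generic fiber $E_{\bar r}^{\text{rig}}$ of $A(\bar r)$, a rigid-analytic space over $L_0 := \on{Frac}(\Lambda)$. Proposition \ref{basic-properties}(4) places $t_\rho$ in the closed analytic subspace $Z := (E_{\bar r}^{\text{rig}})^H$ of $H$-fixed pseudorepresentations, defined over $L_0$.

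\textbf{Main step: zero-dimensionality of $Z$ at $t_\rho$, and conclusion.} Choose an $L_0$-affinoid neighborhood $U \subset Z$ of $t_\rho$ contained in the absolutely irreducible locus of the pseudorepresentation moduli --- possible because $t_\rho$ is absolutely irreducible and this locus is open in Chenevier's theory. The classical points of $U$ are Zariski-dense in $U_{\text{red}}$, and each corresponds, via the bijection between irreducible pseudorepresentations and semisimple representations over $\overline{\mathbb{Q}_\ell}$, to an irreducible arithmetic $\rho_0: \pi_1^{\text{\'et}}(X_{\bar k}, \bar x) \to GL_n(\overline{\mathbb{Q}_\ell})$, which is $H$-fixed and thus has $\phi_x$-invariant trace. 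Lemma \ref{rigidity-lemma} applied to $\rho_0$ shows that every deformation of $\rho_0$ to a local Artinian $\overline{\mathbb{Q}_\ell}$-algebra with $\phi_x$-invariant trace is trivial; passing to pseudorepresentations this says that the formal completion of $Z$ at each such classical point is zero-dimensional. Since a Zariski-dense set of classical points witnesses local dimension zero, $U$ itself must be zero-dimensional. Its connected component at $t_\rho$ is then $\on{Sp}(L)$ for some finite extension $L/L_0$; the $\mathbb{C}_\ell$-point $t_\rho$ factors through an embedding $L \hookrightarrow \mathbb{C}_\ell$, whence $D$ is valued in $\overline{\mathbb{Q}_\ell}$, and Chenevier's equivalence produces the desired semisimple $\tilde\rho$ over $\overline{\mathbb{Q}_\ell}$ with $\tilde\rho \otimes_{\overline{\mathbb{Q}_\ell}} \mathbb{C}_\ell \simeq \rho$.

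\textbf{Main obstacle.} The heart of the argument is the transfer from Lemma \ref{rigidity-lemma}'s formal-algebraic rigidity at $\overline{\mathbb{Q}_\ell}$-points to a dimension bound on the global rigid space $Z$. This transfer rests on two standard but delicate facts: the Zariski-density of classical (i.e.\ $\overline{\mathbb{Q}_\ell}$-valued) points in any $L_0$-affinoid, and the openness of the absolutely irreducible locus in Chenevier's moduli of pseudorepresentations, so that Lemma \ref{rigidity-lemma}'s irreducibility hypothesis is satisfied at every classical point near $t_\rho$. Some care is also required in defining $Z$ as a closed analytic subspace of $E_{\bar r}^{\text{rig}}$ over $L_0$, using the continuity of the outer $G_k$-action on $\pi_1^{\text{\'et}}(X_{\bar k}, \bar x)$.
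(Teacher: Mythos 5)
Your proposal is correct in substance and relies on the same essential inputs as the paper --- Lemma \ref{rigidity-lemma} applied at classical points, the openness of the absolutely irreducible locus in Chenevier's moduli of pseudorepresentations, and the equivalence between pseudorepresentations and semisimple representations --- but the mechanism by which you propagate rigidity from classical points to the $\mathbb{C}_\ell$-point is genuinely different. You form the $H$-fixed locus $Z$ in the rigid generic fiber, show that its completed local ring at every classical point of a suitable affinoid $U$ is just the residue field, and conclude $\dim U=0$ from the Zariski-density of classical points (the Jacobson/Nullstellensatz property of affinoid algebras), so that $t_\rho$ factors through a finite extension of $\mathbb{Q}_\ell$. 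The paper never forms the Galois-fixed locus here: it takes the image $S$ of the framed deformation ring $R^\square_{\bar\rho}$ under the classifying map to $\mathbb{C}_\ell$, observes that $S$ is an integral domain and hence injects into its completion $\widehat{S}$ at a \emph{single} well-chosen classical point $z$ of $\Spec(S[1/\ell])$ lying in the irreducible locus, and applies Lemma \ref{rigidity-lemma} at $z$ alone to force the traces, as elements of $S\subset\widehat{S}$, into $\overline{\mathbb{Q}_\ell}$. The paper's route is leaner (one classical point, no global rigid geometry); yours buys a statement --- zero-dimensionality of the fixed locus --- that is morally the same as what the paper proves later in Step 1 of Theorem \ref{main-theorem} via Weierstrass preparation, so the two arguments end up sharing that flavor. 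Two small points to tidy in your write-up: Lemma \ref{rigidity-lemma} is stated for affine curves with a rational point, so you should delete a point of $X$ (after a finite extension of $k$) before invoking it, as the paper does; and identifying $\phi_x$-fixed pseudodeformations of $D_z$ with deformations of $\rho_0$ having $\phi_x$-invariant trace over Artinian $\overline{\mathbb{Q}_\ell}$-algebras uses the Nyssen--Rouquier/Chenevier equivalence at absolutely irreducible points over Artinian (not merely field) coefficients, which deserves an explicit citation.
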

\begin{proof}
We first claim it suffices to prove the theorem for $\rho$ irreducible. Indeed $\rho$ is a direct sum of irreducible constituents; if each is defined over $\overline{\mathbb{Q}_\ell}$, then their direct sum is as well.  Hence we assume $\rho$ is irreducible.  Moreover, we may assume $X$ is affine,  by deleting a point. 

Let $$\rho_{int}: \pi_1^{\text{\'et}}(X_{\bar k}, \bar x)\to GL_n(\mathscr{O}_{\mathbb{C}_\ell})$$ be an integral model of $\rho$, and let $$\bar\rho: \pi_1^{\text{\'et}}(X_{\bar k}, \bar x)\to GL_n(\overline{\mathbb{F}_\ell})$$ be the residual representation.  By the assumption on $\on{Tr}(\rho)$, $\bar\rho$ factors through $GL_n(\mathbb{F}_{\ell^{r'}})$ for some $r'$; we rename $r'$ as $r$ and abuse notation to refer to the representation $$\pi_1^{\text{\'et}}(X_{\bar k}, \bar x)\to GL_n(\mathbb{F}_{\ell^r})$$ as $\bar\rho$ as well. Let $R^\square_{\bar\rho}$ be the framed deformation ring of $\bar\rho$, as in Section \ref{deformation-ring-preliminaries}.  The representation $\rho$ is classified by a map $$\tilde f_\rho: R^\square_{\bar\rho}\to \mathbb{C}_\ell;$$ let $S$ be the image of $\tilde f_\rho$ inside of $\mathbb{C}_\ell$.  The map $R^\square_{\bar \rho}\to S$ gives a representation $$\rho_S: \pi_1^{\text{\'et}}(X_{\bar k}, \bar x)\to GL_n(S)$$ such that $\rho_S\otimes_S \mathbb{C}_\ell\simeq \rho$.  That is, $\rho_S$ is a model of $\rho$ over a Noetherian local $W(\mathbb{F}_{\ell^r})$-algebra.

There exists a finite-index subgroup $H\subset \on{Gal}(\bar k/k)$ such that for each $h\in H$, $\on{Tr}(\rho_S^h)=\on{Tr}(\rho_S)$, as the same is true for $\rho$.  Now by e.g.~\cite[Proposition G]{chenevier}, there exists a Zariski-open subset $U^{\text{irr}}\subset \on{Spec}(S[1/\ell])$ such that for each point $z: \on{Spec}(\overline{\mathbb{Q}_\ell})\to U^{\text{irr}}$ of $U^{\text{irr}}$, the representation $\rho_z:=\rho_S\otimes_S \overline{\mathbb{Q}_\ell}$ is irreducible.  Choose such a point, and let $\widehat S$ be the completion of $S$ at $z$; as $S$ is an integral domain, $S$ injects into $\widehat S$.  But by Lemma \ref{rigidity-lemma}, $\on{Tr}(\rho_S\otimes_S \widehat S)(g)\subset \overline{\mathbb{Q}_\ell}$ for each $g\in \pi_1^{\text{\'et}}(X_{\bar k}, \bar x)$.  Hence the same is true for $\rho$, and thus $\rho$ is defined over $\overline{\mathbb{Q}_\ell}$ by \cite[Theorem 1]{taylor}.
\end{proof}
\section{Dynamics of deformation rings}\label{dynamics-section}
\subsection{The local dynamical Mordell-Lang conjecture}
We now prove some general facts about continuous pro-finite group actions on $\Lambda[[x_1, \cdots, x_N]]$, which will be used in the proof of Theorem \ref{main-theorem} and related results.  The main technical tool is a uniform local version of the dynamical Mordell-Lang conjecture (Lemma \ref{mordell-lang}); it is surely well-known to experts, but we include a proof as we were unable to find a version with the required uniformities in the literature.

As above, let $\Lambda=W(\mathbb{F}_{\ell^r})$; endow $\Lambda$ with the usual absolute value $|\cdot|_\ell$, so that $|\ell|_\ell=1/\ell$.  Let $R=\Lambda[[x_1, \cdots, x_N]]$.  Let $\Lambda\langle x_1, \cdots, x_N\rangle$ be the Tate algebra on $N$ variables, i.e.~$\Lambda\langle x_1, \cdots, x_N\rangle\subset R$ is the set of power series $$\sum_{I\in \mathbb{Z}_{\geq 0}^N} a_Ix^I$$ with $|a_I|_\ell\to 0$ as $|I|=\sum_{j=1}^N i_j\to \infty$.

We now recall the ``$\ell$-adic analytic arc lemma," in a form due to Poonen \cite{poonen}, building on results of Bell, Ghioca, and Tucker \cite{bell-ghioca-tucker}:
\begin{lemma}[$\ell$-adic analytic arc lemma, {\cite[Theorem 1]{poonen}}]\label{poonen-lemma}
Let $f\in \Lambda\langle x_1, \cdots, x_N\rangle^N$ satisfy $f(\mathbf{x})=\mathbf{x} \bmod \ell^c$ for some $c>\frac{1}{\ell-1}$.  Then there exists $g\in \Lambda\langle x_1, \cdots, x_N, n\rangle^N$ with $g(\mathbf{x}, m)=f^m(\mathbf{x})$ for each $m\in \mathbb{Z}_{\geq 0}$.
\end{lemma}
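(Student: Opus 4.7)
I would interpolate the discrete orbit $m \mapsto f^m(\mathbf{x})$ analytically by Mahler-expanding it in the basis $\{\binom{n}{k}\}_{k\geq 0}$. Writing $f(\mathbf{x}) = \mathbf{x} + \ell^c h(\mathbf{x})$ with $h \in \Lambda\langle x_1, \dots, x_N\rangle^N$, and letting $\Delta$ denote the forward-difference operator $(\Delta\phi)(m) := \phi(m+1) - \phi(m)$ on sequences $\phi \colon \mathbb{Z}_{\geq 0} \to \Lambda\langle x_1, \dots, x_N\rangle^N$, apply $\Delta$ to $\phi(m) := f^m(\mathbf{x})$. The first difference is $(\Delta\phi)(m) = f^{m+1}(\mathbf{x}) - f^m(\mathbf{x}) = \ell^c h(f^m(\mathbf{x})) \in \ell^c \Lambda\langle x_1,\dots,x_N\rangle^N$.

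The key technical estimate is that $(\Delta^k\phi)(m) \in \ell^{ck}\Lambda\langle x_1,\dots,x_N\rangle^N$ for every $k \geq 0$ and $m \geq 0$. I would prove this by induction on $k$. The inductive step exploits the following observation: for any power series $H \in \Lambda\langle x_1,\dots,x_N\rangle^N$, the difference $H(f(\mathbf{y})) - H(\mathbf{y})$ is divisible by $\ell^c$, because $f(\mathbf{y}) - \mathbf{y} = \ell^c h(\mathbf{y})$ and $H$ has integral Taylor coefficients (as $H$ is a convergent power series over $\Lambda$). Applying this principle with $H$ taken to be the iterated difference yields a gain of an extra factor of $\ell^c$ at each induction step.

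Having established this divisibility, the Mahler--Newton expansion gives
$$f^m(\mathbf{x}) = \sum_{k=0}^\infty \binom{m}{k}(\Delta^k\phi)(0)$$
for every $m \in \mathbb{Z}_{\geq 0}$. Viewing $\binom{n}{k}$ as a polynomial of degree $k$ in $n$, Legendre's formula $v_\ell(k!) = (k - s_\ell(k))/(\ell-1) \leq k/(\ell-1)$ (with $s_\ell(k)$ the base-$\ell$ digit sum of $k$) gives $\binom{n}{k} \in \ell^{-k/(\ell-1)}\Lambda\langle n\rangle$. Combined with the bound $(\Delta^k\phi)(0) \in \ell^{ck}\Lambda\langle x_1,\dots,x_N\rangle^N$, the $k$th term of the series lies in $\ell^{k(c-1/(\ell-1))}\Lambda\langle x_1,\dots,x_N, n\rangle^N$. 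Since $c > 1/(\ell-1)$ by hypothesis, the series converges in $\Lambda\langle x_1,\dots,x_N, n\rangle^N$ and defines the desired $g$.

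\textbf{Main obstacle.} The delicate point is the inductive bound on $(\Delta^k\phi)(m)$: a naive argument only produces the uniform bound $\ell^c$, which is not enough for convergence. The gain of $\ell^c$ per step reflects that iterating $f$ acts essentially as the identity on analytic functions modulo $\ell^c$, so successive differences accumulate divisibility. A more conceptual route to the same conclusion is the ``formal flow'' picture: one realizes $f$ as the time-$1$ map of a vector field $V \in \ell^c \Lambda\langle x_1,\dots,x_N\rangle^N$ given by the formal composition-logarithm $V = \log f := \sum_{k\geq 1} (-1)^{k-1}(f - \mathrm{id})^{\circ k}/k$, and sets $g(\mathbf{x}, n) := \exp(nV)(\mathbf{x})$. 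Both $\log$ and $\exp$ converge in this setting precisely under the hypothesis $c > 1/(\ell-1)$, matching the $\ell$-adic radius of convergence of the exponential; this explains why the numerical threshold in the hypothesis is sharp.
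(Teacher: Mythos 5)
Your argument is correct, and it is essentially the proof given in the cited source: the paper itself does not prove this lemma but quotes it from Poonen, whose proof is exactly your Mahler--Newton expansion, i.e.\ writing the composition operator as $I+\Delta$ with $\Delta$ increasing valuations by $c$ and expanding $(I+\Delta)^n=\sum_k\binom{n}{k}\Delta^k$, with convergence from $v_\ell(k!)\leq k/(\ell-1)<ck$. The one point worth making fully explicit in your inductive step is that the ``integral Taylor coefficients'' are the divided derivatives $\partial^J H/J!$, which are integral because they are sums of coefficients of $H$ times binomial coefficients.
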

In other words, iterates of analytic self-maps of the closed unit ball which are sufficiently close to the identity may be $\ell$-adically interpolated.

Using Lemma \ref{poonen-lemma}, we will prove a uniform local version of the dynamical Mordell-Lang conjecture.  Let $R=\Lambda[[x_1, \cdots, x_n]]$, and let $U(R)$ be the rigid generic fiber of $R$; this is the open unit ball.  If $L$ is a $\ell$-adic field, an $L$-point of $U(R)$ is a continuous ring homomorphism $R\to L$.  For $1\geq c>0$ a positive real number, let $U_c(R)\subset U(R)$ be the closed ball of radius $\ell^{-c}$ around any $\Lambda[1/\ell]$-point of $U(R)$.  Note that $U_c(R)$ is independent of the choice of $\Lambda[1/\ell]$-point, by the ultrametric inequality.  In particular, if $$\varphi: \Lambda[[x_1, \cdots, x_N]]\overset{\sim}{\to} \Lambda[[x_1, \cdots, x_N]]$$ is a continuous automorphism, then $\varphi$ restricts to an automorphism of $U_c(R)$ for all $c\leq 1$.  An $L$-point $z: R\to L$ of $U(R)$ lands in $U_c(R)$ if $|z(x_i)|_\ell \leq \ell^{-c}$ for all $i$.  Each $U_c(R)$ is an affinoid subdomain of $U(R)$.

If $\mathscr{I}\subset \mathscr{O}_{\mathbb{C}_\ell}[[x_1, \cdots, x_N]]$ is an ideal, we let $V(\mathscr{I})\subset U(R)$ be the set $$V(\mathscr{I}):=\{z\mid z(\mathscr{I})=\{0\}\}.$$  We call a set which arises this way a \emph{closed analytic subset} of $U(R)$.  The dynamical Mordell-Lang conjecture asks for a characterization of the set of $m\in \mathbb{Z}_{\geq 0}$ such that $\varphi^m(z)\subset V(\mathscr{I})$, where $\varphi$ is an analytic automorphism of $U(R)$ and $z\in U(R)$ and $\mathscr{I}$ --- it asserts that this set is semilinear.  
\begin{defn}[Semilinear sets]
A set $A\subset \mathbb{Z}_{\geq 0}$ is \emph{semilinear with period $M$} if it is the union of a finite set with finitely many residue classes modulo $M$.  That is, it is a union $$S\cup \{a_i+jM\mid a_i\subset T, j\in \mathbb{Z}_{\geq 0}\}$$ where $S\subset \mathbb{Z}_{\geq 0}$ is finite and $T\subset \{0,1, \cdots, M-1\}$.
\end{defn}

\begin{lemma}[Uniform local dynamical Mordell-Lang conjecture]\label{mordell-lang}
Let $c\in \mathbb{R}$ be a real number with $1>c>0$.  Then there exists $M=M(c, \ell^r, N)$ with the following property: if $$\varphi: R=\Lambda[[x_1, \cdots, x_N]]\overset{\sim}{\to} \Lambda[[x_1, \cdots, x_N]]$$ is a continuous automorphism, $z\in U_c(R)$, and $\mathscr{I}\subset \mathscr{O}_{\mathbb{C}_\ell}[[x_1, \cdots, x_N]]$ is a proper ideal, then the set  $$\{m\in \mathbb{Z}_{\geq 0}\mid \varphi^m(z)\in V(\mathscr{I})\}$$ is semilinear with period $M$.
\end{lemma}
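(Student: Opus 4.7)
The plan is to produce an integer $M$ depending only on $c$, $\ell^r$, $N$ such that $\varphi^M$ acts on the affinoid algebra of $U_c(R)$ as the identity modulo some fixed $\ell^{c'}$ with $c' > 1/(\ell-1)$; then to apply Poonen's arc lemma (Lemma \ref{poonen-lemma}) to interpolate the iterates $\varphi^{Mk}(z)$ analytically in $k$; and finally to apply Strassmann's theorem residue-class-by-residue-class modulo $M$ to conclude semilinearity.

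\textbf{Producing $M$.} Fix an integer $c' > 1/(\ell-1)$, and, for simplicity, assume $c \in \mathbb{Z}_{>0}$ (the general case being handled by extending $\Lambda$ to a tamely ramified extension containing $\ell^c$). Introduce rescaled coordinates $T_i = x_i/\ell^c$, so that the affinoid algebra of $U_c(R)$ is $B_c := \Lambda\langle T_1, \ldots, T_N\rangle$. Expanding $\varphi(x_i) = \sum_J a_{i,J} x^J$ with $a_{i,J} \in \Lambda$, one computes $\varphi^*(T_i) = \sum_J a_{i,J}\ell^{c(|J|-1)} T^J \in B_c$. Any multi-index $J$ with $|J| > c'/c$ contributes a term divisible by $\ell^{c'}$, so the reduction $\varphi^*(T_i)\bmod \ell^{c'}$ is a polynomial in $T$ of total degree at most $D := \lfloor c'/c \rfloor$ with coefficients in the finite ring $\Lambda/\ell^{c'}$. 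The tuple $(\varphi^*(T_i)\bmod \ell^{c'})_{i=1}^N$ thus lies in a set of cardinality at most $S := (\ell^{rc'})^{N\binom{D+N}{N}}$. Since $\varphi^*$ is an automorphism of $B_c$, its reduction mod $\ell^{c'}$ is invertible, and pigeonhole applied to the sequence $\{\varphi^{*m}\bmod \ell^{c'}\}_{m \geq 0}$ forces $\varphi^{*m_0} \equiv \on{id} \bmod \ell^{c'}$ for some $m_0 \leq S$. Taking $M := \on{lcm}(1, 2, \ldots, S)$ yields a single integer, depending only on $c, \ell^r, N$, for which $\varphi^{*M} \equiv \on{id} \bmod \ell^{c'}$ whatever the particular automorphism $\varphi$.

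\textbf{Interpolation and Strassmann.} Because $c' > 1/(\ell-1)$, Lemma \ref{poonen-lemma} applied to $\varphi^{*M}$ produces $g \in \Lambda\langle T_1, \ldots, T_N, n\rangle^N$ with $g(T, k) = \varphi^{*Mk}(T)$ for every $k \in \mathbb{Z}_{\geq 0}$. The ideal $\mathscr{I} \cdot \mathbb{C}_\ell\langle T\rangle$ is finitely generated (Tate algebras over $\mathbb{C}_\ell$ are Noetherian), say by $f_1, \ldots, f_s \in \mathscr{O}_{\mathbb{C}_\ell}\langle T\rangle$ after rescaling. For each $i \in \{0, 1, \ldots, M-1\}$, let $\tau_i \in \mathscr{O}_{\mathbb{C}_\ell}^N$ be the $T$-coordinates of $\varphi^i(z)$. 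Then $\varphi^{jM+i}(z)$ has $T$-coordinates $g(\tau_i, j)$, and the condition $\varphi^{jM+i}(z) \in V(\mathscr{I})$ becomes $h_{i,l}(j) := f_l(g(\tau_i, j)) = 0$ for every $l \in \{1, \ldots, s\}$. Each $h_{i,l}$ is an element of $\mathscr{O}_{\mathbb{C}_\ell}\langle n\rangle$, so by Strassmann's theorem it has either finitely many zeros in $\mathbb{Z}_{\geq 0}$ or vanishes identically. Taking the union over $i$ exhibits $\{m : \varphi^m(z) \in V(\mathscr{I})\}$ as a union of a finite set with a subset of the residue classes modulo $M$, i.e., as semilinear with period $M$.

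\textbf{Main obstacle.} The crux is the degree bound in the second paragraph: one must use the strict positivity of $c$ together with the rescaling by $\ell^c$ to force the reduction $\varphi^*(T_i)\bmod \ell^{c'}$ into a finite set, so that the pigeonhole bound $S$ depends only on $c, \ell^r, N$. A secondary technicality is the non-integer $c$ case, which requires enlarging $\Lambda$ by a tamely ramified extension containing $\ell^c$; one must verify that neither the finiteness of $S$ nor the hypothesis of Poonen's lemma is affected by this extension.
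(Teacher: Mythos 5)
Your proposal is correct and follows essentially the same route as the paper: rescale $U_c$ to the closed unit ball over a ramified extension of $\Lambda$ (the paper first replaces $c$ by a nearby rational $c'$ and picks $\varpi$ with $|\varpi|_\ell=\ell^{-c'}$, which is what your ``assume $c\in\mathbb{Z}_{>0}$'' device must really be, since $0<c<1$), use finiteness of bounded-degree reductions mod $\ell^{c'}$ to produce a uniform $M$ with $\varphi^{*M}\equiv\operatorname{id}$, interpolate via Poonen's lemma, and finish with Strassmann residue class by residue class. The only cosmetic divergence is at the end, where you invoke Noetherianity of the Tate algebra to reduce to finitely many generators of $\mathscr{I}$, whereas the paper argues with every $f\in\mathscr{I}$ and observes that an arbitrary intersection of semilinear sets of period $M$ is again semilinear of period $M$; your explicit pigeonhole for $M$ fills in a step the paper only asserts.
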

\begin{proof}
Let $c'$ be a rational number with $c>c'>0$, so that $U_{c'}(R)$ contains $U_c(R)$; it suffices to prove the theorem with $c$ replaced by $c'$.  Choose a (possibly ramified) finite extension $\Lambda'$ of $\Lambda$ so that there exists $\varpi\in \Lambda'$ with $|\varpi|_\ell=\ell^{-c'}$.  There exists $M_1$ depending only on $c', N, \ell^r$ such that $\varphi^{M_1}(\mathbf{0})=\mathbf{0}\bmod \varpi$.  Let $$\tilde\varphi(\mathbf{x})=\frac{1}{\varpi}\varphi^{M_1}(\varpi\cdot \mathbf{x}).$$  Note that $\tilde\varphi$ lies in $\Lambda'\langle x_1, \cdots, x_N\rangle$.  Then there exists $M_2>0$ depending only on $c', \ell^r,N$ such that $\tilde\varphi^{M_2}$ satisfies the hypotheses of Lemma \ref{poonen-lemma}; let $\vartheta\in \Lambda'\langle x_1, \cdots, x_N, n\rangle$ be such that $\vartheta(\mathbf{x}, m)=\tilde\varphi^{M_2m}(\mathbf{x})$ for each $m\in \mathbb{Z}_{\geq 0}$, and let $M=M_1M_2$.

Let $f\in \mathscr{I}$.  Without loss of generality $|f(\mathbf{0})|_\ell\leq \ell^{-c'}$, as otherwise $V(\mathscr{I})\cap U_{c'}(R)=\emptyset$; in particular, $\vartheta(\frac{1}{\varpi}f, m)$ converges for any $m\in \Lambda'$.  We have 
\begin{align*}
\{m\in \mathbb{Z}_{\geq 0}\mid \varphi^m(z)\in V(f)\} &=\{m\in \mathbb{Z}_{\geq 0}\mid z\circ \varphi^m(f)=0\}\\
&= \bigcup_{j=0}^{M-1} \{j+Mm\in \mathbb{Z}_{\geq 0}\mid z\circ \varphi^{j+Mm}(f)=0, m\in \mathbb{Z}_{\geq 0}\}\\
&= \{j+Mm\in \mathbb{Z}_{\geq 0}\mid z\circ \varphi^j \circ(\varpi\cdot \vartheta(\frac{1}{\varpi}\cdot f, m))=0, m\in \mathbb{Z}_{\geq 0}\}.
\end{align*}
But the functions $m\mapsto z\circ \varphi^j \circ(\varpi\cdot \vartheta(\frac{1}{\varpi}\cdot f, m)), j=0, \cdots M-1$ are $\ell$-adic analytic in $m$; hence they either have finitely many zeroes in $\mathbb{Z}_{\geq 0}$ or are identically zero. 

Thus for each $f\in\mathscr{I}$, the set of $m\in \mathbb{Z}_{\geq 0}$ such that $\varphi^m(z)\in V(f)$ is semilinear with period $M$. We have  $$\{m\in \mathbb{Z}_{\geq 0}\mid \varphi^m(z)\in V(\mathscr{I})\}=\bigcap_{f\in\mathscr{I}} \{m\in \mathbb{Z}_{\geq 0}\mid \varphi^m(z)\in V(f)\}.$$ But an arbitrary intersection of sets which are semilinear with period $M$ is semilinear with period $M$, from which we may conclude the result.
\end{proof}
\begin{remark}
Note that the constant $M$ of Lemma \ref{mordell-lang} is independent of $\varphi$.
\end{remark}
\begin{corollary}\label{mordell-lang-cor}
Let $R$ be a local Noetherian $\Lambda$-algebra.  Then for any affinoid subdomain $U$ of the rigid generic fiber of $R$, there exists an integer $M=M(U)$ such that: If $\varphi: R\overset{\sim}{\to} R$ is an automorphism so that $U$ is $\varphi$-stable, any $\varphi$-periodic point $z$ of $U$ satisfies $\varphi^M(z)=z$.
\end{corollary}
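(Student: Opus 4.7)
The plan is to reduce to Lemma~\ref{mordell-lang} applied to an ambient power series ring, by lifting $\varphi$ to an automorphism of that ring.

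First, I would apply the Cohen structure theorem to present $R$ (which we may assume $\mathfrak{m}_R$-adically complete, as $U$ only depends on the completion) as a quotient $\pi: \Lambda[[x_1, \ldots, x_N]] \twoheadrightarrow R$, taking $N = \dim_{\mathbb{F}_{\ell^r}} \mathfrak{m}_R/(\mathfrak{m}_R^2 + \ell R)$ to be the minimal embedding dimension. This induces a closed immersion $U(R) \hookrightarrow U(\Lambda[[x_1, \ldots, x_N]])$ of rigid generic fibers. Since $U$ is affinoid, hence quasi-compact, there exists $c$ with $0 < c < 1$ such that $U$ is contained in the closed ball $U_c(\Lambda[[x_1, \ldots, x_N]])$.

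Next, I would lift $\varphi$ to an automorphism $\tilde\varphi$ of the ambient power series ring. Setting $r_i := \pi(x_i)$, pick any lifts $\tilde r_i \in (x_1, \ldots, x_N)$ of the elements $\varphi(r_i) \in \mathfrak{m}_R$, and define $\tilde\varphi$ by $x_i \mapsto \tilde r_i$. Because $N$ is the minimal embedding dimension, $\varphi$ acts invertibly on $\mathfrak{m}_R/(\mathfrak{m}_R^2 + \ell R) \simeq \mathbb{F}_{\ell^r}^N$; the Jacobian of $\tilde\varphi$ at the origin reduces to this invertible matrix modulo $\ell$, so the formal inverse function theorem ensures $\tilde\varphi$ is a continuous $\Lambda$-algebra automorphism. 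By construction $\pi \circ \tilde\varphi = \varphi \circ \pi$, so $\tilde\varphi$ preserves $\ker(\pi)$ and restricts to $\varphi$ on $U(R) \supset U$.

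Finally, let $z \in U$ be a $\varphi$-periodic point of minimal period $p$; extending scalars, view $z$ as a $\mathbb{C}_\ell$-point and set $\mathscr{I} := \ker(z) \subset \mathscr{O}_{\mathbb{C}_\ell}[[x_1, \ldots, x_N]]$, so that $V(\mathscr{I}) = \{z\}$. Applying Lemma~\ref{mordell-lang} to $\tilde\varphi$, $z$, and $\mathscr{I}$, the set $\{m \in \mathbb{Z}_{\geq 0} : \tilde\varphi^m(z) = z\} = p\mathbb{Z}_{\geq 0}$ is semilinear with period $M := M(c, \ell^r, N)$. An elementary check shows that $p\mathbb{Z}_{\geq 0}$ is semilinear with period $M$ if and only if $p \mid M$, so $p$ divides $M$ and $\varphi^M(z) = \tilde\varphi^M(z) = z$. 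The constant $M$ depends only on $c$, $\ell^r$, and $N$, all of which are determined by $U$, so we may take $M(U) := M$.

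I expect the main obstacle to be producing the lift $\tilde\varphi$ as a genuine automorphism rather than just a continuous endomorphism: Lemma~\ref{mordell-lang} is stated for automorphisms, and it is the minimality of $N$ (equivalently, the fact that $\varphi$ is itself an automorphism, not merely an endomorphism) that lets the formal inverse function theorem apply to $\tilde\varphi$.
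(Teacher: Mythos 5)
Your proof is correct and follows essentially the same route as the paper's: present $R$ as a quotient of $\Lambda[[x_1, \cdots, x_N]]$, lift $\varphi$ to a continuous automorphism of the power series ring, use quasicompactness of the affinoid $U$ to place it inside some $U_c$, and apply Lemma \ref{mordell-lang} to the ideal cutting out the periodic point $z$. The only difference is that you supply details the paper leaves implicit, namely the verification (via the minimal embedding dimension and the formal inverse function theorem) that the lift is genuinely an automorphism, and the elementary observation that semilinearity of $p\mathbb{Z}_{\geq 0}$ with period $M$ forces $p\mid M$.
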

\begin{proof}
Write $R=S/\mathscr{J}$, where $S=\Lambda[[x_1, \cdots, x_N]]$ and $\mathscr{J}\subset S$ is an ideal, so the rigid generic fiber of $R$ is a closed analytic subset of the open unit ball.  We may lift $\varphi$ to an automorphism $\tilde\varphi$ of $\Lambda[[x_1, \cdots, x_N]]$.  Now by quasicompactness of affinoids, $U$ is contained in $U_c(S)$ for some $c$ with $1\geq c>0$.  Now let $z: R\to L$ be a $\varphi$-periodic point of $U$; it is a $\tilde\varphi$-periodic point of $U_c(S)$.  Let $\mathscr{I}\subset S$ be the ideal cutting out $z$, i.e.~$\mathscr{I}=\ker(S\to R\overset{z}{\to} L)$.  Now the result follows from Lemma \ref{mordell-lang}, applied to $z, \mathscr{I}$; note that the integer $M$ coming from Lemma \ref{mordell-lang} only depends on $U$, and not on $\varphi, z,$ etc.
\end{proof}
We now apply this result to the case where $R$ is a deformation ring. Recall from Section \ref{deformation-ring-preliminaries} that if $\bar\rho$ is a residual representation of $\pi_1^{\text{\'et}}(X_{\bar k}, \bar x)$ and $\det\circ \bar\rho$ the associated pseudorepresentation, we denoted the deformation ring of $\det\circ\bar\rho$ by $A(\det\circ\bar\rho)$.  For $c$ with $1\geq c>0$, we let $E_{\bar\rho, c}$ be the affinoid of the rigid generic fiber of $A(\det\circ\bar\rho)$ consisting of pseudorepresentations with trace equal to the trace of $\bar\rho$ mod $\ell^c$.
\begin{corollary}\label{M-periodic-arithmetic}
Let $X$ be a smooth, geometrically connected curve over a finite field $k$ of characteristic $p,$ and let $\ell$ be a prime different from $p$; let $\on{Frob}\in G_k$ be the Frobenius element.  Let $$\bar\rho: \pi_1^{\text{\'et}}(X_{\bar k}, \bar x)\to GL_n(\mathbb{F}_{\ell^r})$$ be a continuous representation such that $\bar\rho^{\on{Frob}}\simeq \bar\rho$, so $\on{Frob}$ acts on $E_{\bar\rho}$.  Then for any $c$ with $1\geq c>0$, there exists $M\in\mathbb{Z}_{>0}$ such that any $\on{Frob}$-periodic point of $E_{\bar\rho, c}$ is fixed by $\on{Frob}^M$. 
\end{corollary}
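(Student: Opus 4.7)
The plan is to apply Corollary~\ref{mordell-lang-cor} essentially as a black box, with $R = A(\det\circ\bar\rho)$ and affinoid $U = E_{\bar\rho, c}$. Three small verifications are needed: that $\on{Frob}$ genuinely acts as a continuous $\Lambda$-algebra automorphism of $A(\det\circ\bar\rho)$, that this automorphism preserves $E_{\bar\rho, c}$, and that $E_{\bar\rho, c}$ is an affinoid subdomain of the rigid generic fiber of $A(\det\circ\bar\rho)$ in the sense required by Corollary~\ref{mordell-lang-cor}.

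For the first point, since $\bar\rho^{\on{Frob}}\simeq\bar\rho$, taking determinants gives $(\det\circ\bar\rho)^{\on{Frob}} = \det\circ\bar\rho$ as pseudorepresentations. Hence the action of $\on{Frob}\in G_k$ on $\pi_1^{\text{\'et}}(X_{\bar k}, \bar x)$ fixes the residual pseudorepresentation and, by the universal property of $A(\det\circ\bar\rho)$, induces a continuous $\Lambda$-algebra automorphism of the deformation ring. For the second, $E_{\bar\rho, c}$ is by definition the locus where $\Lambda_1^D\equiv\on{Tr}(\bar\rho)\bmod\ell^c$, a condition manifestly preserved by the $\on{Frob}$-action (which only permutes trace values fixing $\on{Tr}(\bar\rho)$ residually). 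For the third, this was recorded in the preliminaries: $E_n$ is the disjoint union of the rigid generic fibers of the $A(\bar r)$, and $E_{\bar\rho, c}$ is the affinoid subdomain cut out by the mod-$\ell^c$ trace condition.

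Given these three points, Corollary~\ref{mordell-lang-cor} applied to $\varphi = \on{Frob}$ yields an integer $M = M(E_{\bar\rho, c})\in\mathbb{Z}_{>0}$, depending only on the affinoid $E_{\bar\rho, c}$ and not on the automorphism, such that every $\on{Frob}$-periodic point of $E_{\bar\rho, c}$ is fixed by $\on{Frob}^M$. This $M$ is the required constant.

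There is no real obstacle here; the result is purely a specialization of the uniform local Mordell--Lang statement to the deformation-theoretic setting, with the nontrivial work already encapsulated in Lemma~\ref{mordell-lang} and the observation (made in Section~\ref{deformation-ring-preliminaries}) that the Galois action and the affinoids $E_{\bar\rho, c}$ interact well.
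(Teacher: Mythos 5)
Your proposal is correct and follows the paper's proof exactly: the paper likewise observes that $\bar\rho^{\on{Frob}}\simeq\bar\rho$ gives an action of $\on{Frob}$ on $A(\det\circ\bar\rho)$ and then invokes Corollary \ref{mordell-lang-cor} with $R=A(\det\circ\bar\rho)$, $\varphi=\on{Frob}$, and $U=E_{\bar\rho,c}$. The three verifications you spell out are exactly the points the paper leaves implicit, so your write-up is if anything slightly more complete.
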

\begin{proof}
By the assumption that $\bar\rho^{\on{Frob}}\simeq \bar\rho$, $\on{Frob}$ acts on $A(\det\circ \bar\rho)$; now we are in precisely the situation of Corollary \ref{mordell-lang-cor}, setting $R=A(\det\circ\bar\rho), \varphi=\on{Frob},$ and $U=E_{\bar\rho, c}$.
\end{proof}
\begin{remark}
Unwinding the proof, we used Lemma \ref{poonen-lemma} to interpolate the action of the powers of $\on{Frob}$ on $A(\det\circ\bar\rho)$. We knew a priori that there was a continuous interpolation (coming from the action of $G_k$ on $A(\det\circ\bar\rho)$) --- the input of Lemma \ref{poonen-lemma} is required to see that this action is locally analytic.
\end{remark}
\subsection{Finiteness}
We now prove Theorem \ref{main-theorem} and Corollary \ref{main-corollary}.
\begin{proof}[Proof of Theorem \ref{main-theorem}]
The proof proceeds by reduction to the case where $k$ is a finite field.

\emph{Step 1.} We first prove the theorem under the assumption that $k$ is finite. Let $$\bar\rho: \pi_1^{\text{\'et}}(X_{\bar k}, \bar x)\to GL_n(\mathbb{F}_{\ell^r})$$ be a residual representation as in the statement of the theorem; let $A(\det\circ\bar\rho)$ be the deformation ring of the pseudorepresentation corresponding to $\bar\rho$, defined in Section \ref{deformation-ring-preliminaries}.  Let $E_{\bar\rho, c}$ be the affinoid of the rigid generic fiber of $A(\det\circ\bar\rho)$, also defined in Section \ref{deformation-ring-preliminaries}. After extending $k$, we have that $\bar\rho$ is $G_k$-fixed (as it has finite image) and hence that $G_k$ acts on $A(\det\circ\bar\rho)$ and $E_{\bar\rho, c}$; arithmetic representations correspond to $G_k$-periodic $\mathbb{C}_\ell$-points of $E_{\bar\rho, c}$ (after Theorem \ref{rigidity-theorem}, we may assume they are $\overline{\mathbb{Q}_\ell}$-points).  It suffices to show that there are finitely many such points of $E_{\bar\rho, c}$.

But we are in the situation of Corollary \ref{M-periodic-arithmetic} --- there exists $M$ such that any $G_k$-periodic point of $E_{\bar\rho, c}$ is fixed by $\on{Frob}^M$.  Consider the set of all $\on{Frob}^M$-fixed points; this is an analytic subset of $E_{\bar\rho, c}$.  By the Weierstrass preparation theorem, if it is infinite, it is in fact a \emph{positive-dimensional} rigid space. Hence it contains a $\mathbb{C}_\ell$-point not defined over $\overline{\mathbb{Q}_\ell}$.  But such a point would be the pseudorepresentation associated to a semisimple arithmetic representation (by \cite[Theorem 1]{taylor}) over $\mathbb{C}_\ell$, not defined over $\overline{\mathbb{Q}_\ell}$, contradicting Theorem \ref{rigidity-theorem}.

\emph{Step 2.} We now reduce to the case of finite fields.  Let $X'$ be the finite \'etale cover of $X_{\bar k}$ defined by $\on{ker}(\bar\rho)\subset \pi_1(X_{\bar k}, \bar x)$.  After replacing $k$ with a finite extension, we may assume that $X'$ and the finite \'etale map $X'\to X_{\bar k}$ are in fact defined over $k$.  Let $\bar x'$ be a geometric point of $X'$ lying over $\bar x$.  It suffices to prove the theorem with $X$ replaced by $X'$ and $\bar \rho$ replaced with the trivial representation. Indeed, any semisimple representation $\rho$ of $\pi_1^{\text{\'et}}(X_{\bar k}, \bar x)$ is a subquotient of $\on{Ind}_{\pi_1^{\text{\'et}}(X'_{\bar k}, \bar x')}^{\pi_1^{\text{\'et}}(X_{\bar k}, \bar x)}(\rho|_{\pi_1^{\text{\'et}}(X'_{\bar k}, \bar x')})$. There are finitely many such subquotients.  So we rename $X'$ as $X$ and assume $\bar\rho$ is trivial.

Now let $\overline{X}$ be the unique smooth geometrically connected curve containing $X$, and let $D=\overline{X}\setminus X$; after extending $k$ we may assume $D=\{x_1, \cdots, x_i\}$ where the $x_i$ are rational points of $\overline{X}$; after a further extension, we may assume the geometric point $\bar x$ of $X$ comes from a $k$-rational point $x$ of $X$.  There exists an algebra $R\subset k$, finitely generated over $\mathbb{Z}$, and a smooth curve $\overline{\mathscr{X}}/R$ with disjoint $R$-points $\xi_0, \xi_1, \cdots, \xi_i$ so that $(\overline{\mathscr{X}}, \xi_0, \xi_1, \cdots, \xi_i)_{k}\simeq (X,  x, x_1, \cdots, x_i)$.  For any closed point $z\in \on{Spec}(R)$ with $\on{char}(\kappa(z))\not=\ell$, the specialization map 
\begin{equation}\label{specialization-iso}
\pi_1^{\ell}(X_{\bar k}, \bar x)\to \pi_1^\ell((\overline{\mathscr{X}}\setminus\{\xi_1, \cdots, \xi_i\})_{\overline{\kappa(z)}}, (\xi_0)_{\overline{\kappa(z)}})
\end{equation}
 is an isomorphism, where $\pi_1^\ell$ denotes the pro-$\ell$ completion of $\pi_1^{\text{\'et}}$ (see \cite[Theorem A.10]{lieblich-olsson}, for example). Choose such a $z$,  let $\overline{\kappa(z)}$ be an algebraic closure of $\kappa(z)$, $\bar z\in \on{Spec}(S)(\overline{\kappa(z)})$ the asssociated geometric point of $\on{Spec}(S)$, and let $F\in \pi_1^{\text{\'et}}(\on{Spec}(S), \bar z)$ be the Frobenius element.

Now we claim that any arithmetic representation $\rho$ of $\pi_1^{\text{\'et}}(X_{\bar k}, \bar x)$ which is residually trivial (and hence factors through $\pi_1^{\ell}(X_{\bar k}, \bar x)$) gives an arithmetic representation of  $\tilde\rho: \pi_1^{\text{\'et}}((\overline{\mathscr{X}}\setminus\{\xi_1, \cdots, \xi_i\})_{\overline{\kappa(z)}}, (\xi_0)_{\overline{\kappa(z)}})$, via the isomorphism \ref{specialization-iso}. Indeed, it suffices to show that there exists $M$ such that $\tilde\rho^{F^M}\simeq \tilde\rho$.  But the action of $G_k$ on $\rho$ factors through its action on $\pi_1^\ell(X_{\bar k}, \bar x)$ and hence through $\pi_1^{\text{\'et}}(\on{Spec}(S))$; by arithmeticity, there is a finite index subgroup of $\pi_1^{\text{\'et}}(\on{Spec}(S))$ fixing $\rho$.  Thus $F^M$ fixes $\rho$ and hence $\tilde\rho$.

Thus we may replace $k$ with $\kappa(z)$ and $X$ with $\overline{\mathscr{X}}\setminus\{\xi_1, \cdots, \xi_i\}$; as $\kappa(z)$ is finite, we have reduced to the case of finite fields, which was proven in Step 1.
\end{proof}
We now deduce Corollary \ref{main-corollary}.
\begin{proof}[Proof of Corollary \ref{main-corollary}]
We first prove the statements about semisimple arithmetic representations.

Let $L$ be a finite extension of $\mathbb{Q}_\ell$, as in the statement, with valuation ring $\mathscr{O}_L$, maximal ideal $\mathfrak{m}_L$, and residue field $\mathbb{F}_{\ell^r}$. There are finitely many continuous representations $$\pi_1^{\text{\'et}}(X_{\bar k}, \bar x)\to GL_n(\mathbb{F}_{\ell^r}),$$ (resp.~$\pi_1^{\text{tame}}(X_{\bar k}, \bar x)\to GL_n(\mathbb{F}_{\ell^r})$ in characteristic $p>0$) as $\pi_1^{\text{\'et}}(X_{\bar k}, \bar x)$ (resp.~~$\pi_1^{\text{tame}}(X_{\bar k}, \bar x)$) is topologically finitely-generated.  Given this, (2) and (3) follow from (1).  So we fix $\bar\rho$ as in the statement; we wish to show that there are finitely many semisimple arithmetic $GL_n(L)$-valued representations $\rho$ with $\on{Tr}(\rho)\equiv \on{Tr}(\bar\rho)\bmod \mathfrak{m}_L$.

Let $c\in \mathbb{R}$ be such that $0<c<v_\ell(x)$ for any $x\in \mathfrak{m}_L$; such a $c$ exists because $L$ is discretely valued.  Now any arithmetic representation $\rho$ admits a $\pi_1^{\text{\'et}}(X_{\bar k}, \bar x)$-stable $\mathscr{O}_L$-lattice $M$; let $\bar\rho_M=M/\mathfrak{m}_LM$.  By our choice of $c$, we have $$\on{Tr}(\rho)\equiv \on{Tr}(\bar \rho)\bmod \ell^c.$$ Hence there are only finitely many possibilities for $\rho$, by Theorem \ref{main-theorem}.

We now deduce the required statements for representations which arise from geometry.  But such representations are arithmetic by Proposition \ref{comes-from-geometry}, and semisimple by \cite[Corollaire 3.4.13]{weilii}.
\end{proof}
\section{An analogue of the Frey-Mazur conjecture}
We now begin preparations for the proof of Theorem \ref{ball-bound}.
\subsection{Weight filtrations on deformation rings}
Let $C$ be a smooth, affine, geometrically connected curve over a finitely generated field $k$ of characteristic $0$, and let $c$ be a rational point of $C$; choosing an algebraic closure of $k$, $c$ gives rise to a geometric point $\bar c$.  Let $\overline{C}$ be the unique smooth, proper, geometrically connected $k$-curve containing $C$ as an open subscheme.  Let $$\rho: \pi_1^{\text{\'et}}(C_{\bar k}, \bar c)\to GL_n(\overline{\mathbb{Q}_\ell})$$ be an arithmetic representation of $\pi_1^{\text{\'et}}(C_{\bar k}, \bar c)$.  The Leray spectral sequence for the inclusion $$j: C\hookrightarrow \overline{C}$$ gives an exact sequence 
\begin{equation}\label{leray-sequence}
0\to H^1(\overline{C}_{\bar k, \text{\'et}}, j_*\rho) \to H^1(C_{\bar k,\text{\'et}}, \rho)\to H^0(\overline{C}_{\bar k, \text{\'et}}, R^1j_*\rho)\to H^2(\overline{C}_{\bar k, \text{\'et}}, j_*\rho)
\end{equation}
where we here identify $\rho$ with the associated lisse $\ell$-adic sheaf.  
\begin{defn}\label{h1-weight-filtration}
The \emph{weight filtration} on $H^1(C_{\bar k,\text{\'et}}, \rho)$ is defined by $$W^iH^1(C_{\bar k,\text{\'et}}, \rho)=0 \text{ for } i\leq 0$$ $$W^1H^1(C_{\bar k,\text{\'et}}, \rho)=H^1(\overline{C}_{\bar k, \text{\'et}}, j_*\rho)$$ $$W^iH^1(C_{\bar k,\text{\'et}}, \rho)=H^1(C_{\bar k,\text{\'et}}, \rho) \text{ for } i\geq 2.$$
\end{defn}
By \cite[Th\'{e}or\`{e}me 2]{weilii}, this agrees with the usual, geometrically-defined weight filtration on $H^1(C_{\bar k,\text{\'et}}, \rho)$ if $\rho$ arises from geometry and is pure of weight zero.

Now suppose $\rho$ is irreducible, and let $S_\rho$ be its deformation ring.  Note that for two given basepoints $\bar c, \bar c'$ of $C$, the associate deformation rings are canonically isomorphic, so we do not include $\bar c$ in the notation.  Let $$\rho_{\text{univ}}: \pi_1^{\text{\'et}}(C_{\bar k}, \bar c)\to GL_n(S_\rho)$$ be the universal deformation of $\rho$.

Because $C$ is an affine curve, $$S_{\rho}\simeq \overline{\mathbb{Q}_\ell}[[x_1, \cdots, x_N]]$$ non-canonically, where $N=\dim H^1(C_{\bar k, \text{\'et}}, \rho\otimes \rho^\vee)$.  Let $\mathfrak{m}_\rho$ be the maximal ideal of $S_\rho$.  We have $$\mathfrak{m}_\rho/\mathfrak{m}_\rho^2\simeq H^1(C_{\bar k, \text{\'et}}, \rho\otimes \rho^\vee)^\vee$$ canonically; let $W^i(\mathfrak{m}_\rho/\mathfrak{m}_\rho^2)$ be the dual filtration to the filtration given in Definition \ref{h1-weight-filtration}.
\begin{defn}[Weight filtration on $S_\rho$]
Let $$W^iS_\rho=S_\rho \text{ for } i\geq 0,$$ $$W^{-1}S_\rho=\mathfrak{m}_\rho$$ $$W^{-2}S_\rho=\mathfrak{m}_\rho^2+W^{-2}(\mathfrak{m}_\rho/\mathfrak{m}_\rho^2)$$ and $$W^{-m}S_\rho=\sum_{i+j=m} (W^{-i}S_\rho)\cdot (W^{-j}S_\rho) \text{ for } m>2.$$
\end{defn}
\begin{remark}
If $C$ is proper, $W^{-i}=\mathfrak{m}_{\rho}^i$ for $i\geq 0$. In general, it is immediate from the definition that $$W^{-2i}\subset \mathfrak{m}_\rho^i\subset W^{-i}$$ for $i\geq 0$.
\end{remark}
Replace $k$ with a finite extension so that $G_k$ acts on $S_\rho$. The goal of this section is to construct, for $\alpha\in \mathbb{Z}_\ell^\times$ sufficiently close to $1$, an element $\sigma_\alpha\in G_k$ such that $\sigma_\alpha$ acts on $\on{gr}^{-i}_W S_\rho$ via the scalar $\alpha^i$.  We first prove the analogous statement for $H^1(C_{\bar k,\text{\'et}}, \rho\otimes \rho^\vee)$, where $\rho$ is irreducible and arises from geometry.

So we assume $\rho$ is irreducible and arises from geometry. Note that, as $\rho$ is irreducible, there exists (after replacing $k$ with a finite extension) a representation $$\tilde\rho: \pi_1^{\text{\'et}}(C_{k}, \bar c)\to GL_n(\overline{\mathbb{Q}_\ell})$$ such that $\tilde\rho|_{\pi_1^{\text{\'et}}(C_{\bar k}, \bar c)}\simeq \rho,$ and such that the representation $\rho_G$ of $G_{k}$ given by restricting $\tilde\rho$ to the decomposition group at $c$ is de Rham at primes above $\ell$, and unramified almost everywhere.  Moreover, the irreducibility of $\rho$ implies that $\rho_G\otimes \rho_G^\vee$ is independent of the choice of $\rho'$; in particular, $G_k$ acts canonically on $H^1(C_{\bar k}, \rho\otimes \rho^\vee)$, and as $\rho\otimes\rho^\vee$ is pure of weight zero, Weil II \cite{weilii} implies $H^1(C_{\bar k}, \rho\otimes \rho^\vee)$ is mixed with weights in $\{1,2\}$, with the weight filtration given as in Definition \ref{h1-weight-filtration}.
\begin{lemma}\label{h1-quasiscalar}
Let $$\rho: \pi_1^{\text{\'et}}(C_{\bar k}, \bar c)\to GL_n(\overline{\mathbb{Q}_\ell})$$ be an irreducible representation which arises from geometry.  Then for $\alpha\in \mathbb{Z}_\ell^\times$ sufficiently close to $1$, there exists $\sigma_\alpha\in G_k$ such that $\sigma_\alpha$ acts on $\on{gr}^1_W H^1(C_{\bar k}, \rho\otimes \rho^\vee)$ via $\alpha\cdot \on{Id}$, and on $$(\on{gr}^2_WH^1(C_{\bar k}, \rho \otimes \rho^\vee))\oplus H^0(\overline{C}_{\bar k, \text{\'et}}, R^1j_*(\rho\otimes \rho^\vee))$$ via $\alpha^2\cdot \on{Id}.$
\end{lemma}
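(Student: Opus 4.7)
\emph{Plan.} The argument produces $\sigma_\alpha$ via Bogomolov's theorem on homotheties in the Galois image for the Jacobian of an auxiliary cover, and then uses the Weil pairing on that Jacobian to force the correct cyclotomic compatibility on the weight-$2$ piece.

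By Weil~II applied to the pure weight-$0$ lisse sheaf $\rho\otimes\rho^\vee$, the piece $\on{gr}^1_W H^1 = H^1(\overline{C}_{\bar k},j_*(\rho\otimes\rho^\vee))$ is pure of weight $1$. In characteristic zero the geometric tame inertia at a cusp $x$ is canonically $\widehat{\mathbb{Z}}(1)$, so
$$H^0(\overline{C}_{\bar k},R^1 j_*(\rho\otimes\rho^\vee)) \;=\; \bigoplus_{x\in D}(\rho\otimes\rho^\vee)_{I_x}(-1)$$
is pure of weight $2$, and so is its $G_k$-subquotient $\on{gr}^2_W H^1$.

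Choose a finite étale cover $\overline{C}'\to\overline{C}$ over which the restriction of $\rho$ to $C'$ becomes trivial. Then $\on{gr}^1_W H^1$ is a $G_k$-subquotient of a finite direct sum of copies of $H^1(\overline{C}'_{\bar k},\mathbb{Q}_\ell)$. After enlarging $k$ finitely, Bogomolov's theorem for $\on{Jac}(\overline{C}')$ produces an open subgroup $U\subset\mathbb{Z}_\ell^\times$ containing $1$ such that for every $\alpha\in U$ there is $\sigma_\alpha\in G_k$ acting as $\alpha\cdot\on{Id}$ on $H^1(\overline{C}'_{\bar k},\mathbb{Q}_\ell)$, and hence on the subquotient $\on{gr}^1_W H^1$. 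Comparing determinants via the Weil pairing $\wedge^{2g}H^1(\overline{C}'_{\bar k},\mathbb{Q}_\ell)\cong\mathbb{Q}_\ell(-g)$ then forces $\chi_\ell(\sigma_\alpha)=\alpha^{-2}$.

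Purity of $\rho\otimes\rho^\vee$ together with Kronecker's theorem implies that each coinvariant space $(\rho\otimes\rho^\vee)_{I_x}$, viewed as a $G_k$-representation, has Frobenius eigenvalues that are roots of unity of bounded order; by a Burnside-type argument its $G_k$-image is finite, so after one further finite extension of $k$ the action of $G_k$ on $(\rho\otimes\rho^\vee)_{I_x}$ is trivial for every cusp $x$. Consequently the $G_k$-action on the entire weight-$2$ piece $\on{gr}^2_W H^1 \oplus H^0(\overline{C}_{\bar k},R^1j_*(\rho\otimes\rho^\vee))$ factors through $\chi_\ell^{-1}$, and thus $\sigma_\alpha$ acts on it as $\chi_\ell(\sigma_\alpha)^{-1} = \alpha^2\cdot\on{Id}$. \emph{Main obstacle:} The construction requires a \emph{single} $\sigma_\alpha$ acting correctly on both graded pieces simultaneously; Bogomolov's theorem directly yields only the Jacobian-side scalar, and the match on the cyclotomic side is \emph{forced} (not freely chosen) by the Weil pairing. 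The technical heart is arranging one finite extension of $k$ that simultaneously makes the Galois image on $\on{Jac}(\overline{C}')$ contain the requisite homotheties and trivializes the residual $G_k$-action on every coinvariant $(\rho\otimes\rho^\vee)_{I_x}$.
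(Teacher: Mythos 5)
Your reduction to the Jacobian of an auxiliary curve has a fatal gap at its first step: you ``choose a finite \'etale cover $\overline{C}'\to\overline{C}$ over which the restriction of $\rho$ to $C'$ becomes trivial.'' No such cover exists in general. The lemma is about an arbitrary irreducible representation arising from geometry, and such representations typically have \emph{infinite} image (e.g.\ the monodromy of a non-isotrivial family of abelian varieties); a finite \'etale cover can only trivialize a representation with finite image. Everything downstream --- realizing $\on{gr}^1_W H^1(C_{\bar k},\rho\otimes\rho^\vee)$ as a subquotient of copies of $H^1(\overline{C}'_{\bar k},\mathbb{Q}_\ell)$, invoking Bogomolov's homothety theorem for $\on{Jac}(\overline{C}')$, and using the Weil pairing to pin down $\chi_\ell(\sigma_\alpha)$ --- therefore collapses outside the finite-image case. (That special case is essentially Theorem \ref{finite-image-thm}, where the paper does reduce to the Jacobian and quotes Serre.) A secondary soft spot: your claim that $G_k$ acts through a finite group on each $(\rho\otimes\rho^\vee)_{I_x}$ ``by Kronecker plus a Burnside-type argument'' is not a proof; to get finiteness of the image of a global Galois representation from its Frobenius eigenvalues being roots of unity you need more (almost-everywhere unramifiedness, integrality at all places, and a Hodge--Tate weight $0$ statement), none of which you establish.

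The paper's actual argument avoids any auxiliary cover. Writing $V=H^1(C_{\bar k},\rho\otimes\rho^\vee)\oplus H^0(\overline{C}_{\bar k},R^1j_*(\rho\otimes\rho^\vee))$ with its Galois action $\gamma$, Step 1 shows the \emph{Zariski closure} of $\on{im}(\gamma)$ contains the desired quasi-scalars: one takes a Frobenius $F$, looks at the torus part $T$ of the identity component of $\overline{\{F^n\}}$, and checks that every multiplicative relation $\prod_i\lambda_i^{a_i}=1$ among the Frobenius eigenvalues forces $\sum_{i\in I_1}a_i+2\sum_{j\in I_2}a_j=0$ by taking archimedean absolute values and using purity of the weight-graded pieces; hence $\alpha\cdot\on{Id}_{\on{gr}^1}\oplus\alpha^2\cdot\on{Id}_{\on{gr}^2}$ lies in $T$ for every $\alpha$. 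Step 2 then upgrades ``in the Zariski closure'' to ``in the image'' by Bogomolov's \emph{openness} theorem for Hodge--Tate representations (not the homothety theorem for abelian varieties), the Hodge--Tate/de Rham property of $V$ being supplied by relative $p$-adic Hodge theory for de Rham local systems (Liu--Zhu, Lan--Liu--Zhu). If you want to salvage your strategy, you would need to replace the cover by something like this two-step ``weights pin down the torus, then openness of the image'' argument.
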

\begin{proof}
This is similar to \cite[Lemma 2.10]{litt}, \cite[Lemme 12]{hindry}, or \cite[Theorem 3]{bogomolov-points}, with some mild additional complications arising from the fact that we do not assume $\rho$ arises from the monodromy action on the Tate module of an Abelian $C$-scheme. For simplicity of notation, we set $$V=H^1(C_{\bar k}, \rho \otimes \rho^\vee)\oplus H^0(\overline{C}_{\bar k, \text{\'et}}, R^1j_*\rho),$$ and we let $$\gamma: G_k\to GL(V)$$ be the Galois representation we are studying. We assume $k$ is a number field; the general case follows by the argument of \cite[Letter to Ribet of 1/1/1981, \S 1]{serre-oeuvres}.  The weight filtration on $V$ is inherited from that of $H^1(C_{\bar k}, \rho \otimes \rho^\vee)$; $H^0(\overline{C}_{\bar k, \text{\'et}}, R^1j_*\rho)$ is pure of weight $2$.

\emph{Step 1.} We first show that for any $\alpha\in \mathbb{Q}_\ell^\times$, the Zariski-closure of $\on{im}(\gamma)$ contains elements $\sigma_\alpha$ acting as required.  Let $F\in G_k$ be a Frobenius element acting on $\on{gr}^i_W(V)$ with weight $i$; let $Z\subset \overline{\on{im}(\gamma)}$ be the identity component of Zariski-closure of $\{F^n\}_{n\in \mathbb{Z}}$.  $Z$ is a commutative, connected, algebraic group over a field of characteristic zero, and hence $Z\simeq T\times U$ canonically, where $T$ is a torus and $U$ is unipotent.  After replacing $F$ with a power, we may assume it lies in $Z$.  In particular $F$ admits a unique decomposition $F=F_sF_u$, where $F_s\in T$ is semisimple and $F_u\in U$; $F_s$ and $F$ have the same eigenvalues.  Choose a basis of eigenvectors $\{e_i\}_{i=1, \cdots ,\dim(V)}$ for $F_s$, with eigenvalues $\{\lambda_i\}$, and let $D\subset GL(V)$ be the diagonal torus for this basis.  Let $I_1\subset \{1, \cdots, \dim(V)\}$ be the set of indices $i$ such that $\lambda_i$ has weight $1$, and $I_2$ the set of indices $i$ such that $\lambda_i$ has weight $2$.  The inclusion $T\hookrightarrow D$ induces a surjection on cocharacter lattices $X(D)\twoheadrightarrow X(T)$ with kernel $K$; $T$ is precisely the subtorus of $D$ cut out by the characters in $K$. If we identify $D$ with $\mathbb{Z}^{\dim V}$ via the choice of basis $\{e_i\}$, $K$ consists of the vectors $\underline{a}=(a_1, \cdots, a_{\dim V})$ such that $$\prod_i \lambda_i^{a_i}=1.$$ But if $\underline{a}\in K$, $$\prod_i |\lambda_i|^{a_i}=1,$$ where $|\cdot|$ denotes any Archimedean absolute value on $\overline{\mathbb{Q}}$, by the multiplicativity of absolute value.  In other words, $$\prod_{i\in I_1} q^{a_i/2} \cdot \prod_{j\in I_2} q^{a_j}=1,$$ where $q$ is the size of the residue field of the prime corresponding to $F$, and hence $$\sum_{i\in I_1} a_i+2\sum_{j\in I_2} a_j=0.$$

Hence if $V=\on{gr}^1_W(V)\oplus \on{gr}^2_W(V)$ is the unique $F_s$-equivariant splitting, we have that $$\alpha\cdot\on{Id}_{\on{gr}^1_W(V)}\bigoplus \alpha^2\cdot\on{Id}_{\on{gr}^2_W(V)}\in T,$$ as desired.

\emph{Step 2.}
Let $\tilde\rho: \pi_1^{\text{\'et}}(C_{k}, \bar c)\to GL_n(\overline{\mathbb{Q}_\ell})$ be an extension of $\rho$ to the arithmetic fundamental group, as in the discussion before the statement of the lemma; then $\tilde\rho$ is defined over $L$ for some $L/\mathbb{Q}_\ell$ finite. We abuse notation and write $$\tilde\rho: \pi_1^{\text{\'et}}(C_{k}, \bar c)\to GL_n(L)$$ for some choice of descent of our original repesentation to a model over $L$.   Let $V_L$ be the descent of $V$ to $L$ obtained from $\tilde\rho$, and let $$\tilde \gamma: G_k\to GL(V_L)$$ be the associated Galois representation.

Let $$\gamma'=W_{L/\mathbb{Q}_\ell}\tilde\gamma$$ be the representation obtained by Weil restriction of $\tilde\gamma$ to $\mathbb{Q}_\ell$. 

We wish to show that the image of $\gamma'$ is open in the $\mathbb{Z}_\ell$-points of its Zariski-closure; by Step 1, this suffices. By \cite[Th\'{e}or\`{e}me 1]{bogomolov}, it is enough to check that $\gamma'$ is Hodge-Tate at primes above $\ell$.  But for any prime $v$ above $\ell$, the lisse sheaf associated to $\tilde\rho\otimes \tilde\rho^\vee$ is a de Rham local system on $C_{k_v}$ in the sense of \cite{liu-zhu}, and hence its cohomology group $$H^1(C_{\bar k}, \tilde\rho \otimes \tilde\rho^\vee)$$ is de Rham by \cite[Theorem 1.1]{lan-liu-zhu}, for example. 

Moreover, we claim that $H^0(\overline{C}_{\bar k, \text{\'et}}, R^1j_*\rho)$ is de Rham. This is a local computation at each point of $\overline{C}\setminus C$; after replacing the local ring at each point by a ramified extension, we may assume $\rho$ comes from the cohomology of a semistable $\overline{C}$-scheme and conlcude by e.g.~the weight spectral sequence for this semistable scheme.  (Alternately, this can be deduced from \cite[Lemma 5.57]{lan-liu-zhu}.) Hence it is Hodge-Tate, which completes the proof. 
\end{proof}
\begin{remark}\label{fontaine-mazur-remark}
This lemma is the only place in the proof of Theorem \ref{ball-bound} in which the geomtricity of $\rho$ is used.  In fact, the weaker condition that $\rho$ extends to a local system on $C$ which is \emph{geometric in the sense of Fontaine-Mazur} suffices --- see e.g. the conjecture on page 2 of \cite{liu-zhu} for a discussion of this notion.  In this case one may deduce the desired $p$-adic Hodge-theoretic properties for $H^1(C_{\bar k}, \rho\otimes \rho^\vee), H^0(\overline{C}_{\bar k}, R^1j_*(\rho\otimes \rho^\vee))$ for such $\rho$ from \cite[Theorem 1.1 and Lemma 5.57]{lan-liu-zhu}, for example.
\end{remark}

The set of $\alpha$ for which the desired $\sigma_\alpha$ exist is an important invariant of the representation $\rho$, which we record in the following definition.

\begin{defn}[Index of homothety]\label{index-of-homothety}
Let $\rho$ be as in Lemma \ref{h1-quasiscalar}.  Let $Z\subset \mathbb{Z}_\ell^{\times}$ be the set of $\alpha$ for which there exists $\sigma_\alpha$ satisfying the conclusions of Lemma \ref{h1-quasiscalar}.  Then the \emph{index of homothety} of $\rho$, denoted $c(\rho)$, is the index of $Z$ in $\mathbb{Z}_\ell^\times$. By Lemma \ref{h1-quasiscalar}, $Z$ is open in $\mathbb{Z}_\ell^\times$, so $c(\rho)$ is finite.
\end{defn}

We now give the analogous statement for $S_\rho$.
\begin{theorem}\label{pi1-quasiscalar}
Let $$\rho: \pi_1^{\text{\'et}}(C_{\bar k}, \bar c)\to GL_n(\overline{\mathbb{Q}_\ell})$$ be an irreducible representation which arises from geometry.  Then for $\alpha\in \mathbb{Z}_\ell^\times$ sufficiently close to $1$, there exists $\sigma_\alpha\in G_k$ such that $\sigma_\alpha$ acts on $\on{gr}^i_W S_\rho$ via $\alpha^i\cdot \on{Id}$.
\end{theorem}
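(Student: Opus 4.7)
The plan is to reduce Theorem~\ref{pi1-quasiscalar} to Lemma~\ref{h1-quasiscalar} by dualizing, and then to propagate the scalar action to every graded piece of $S_\rho$ using the fact that the weight filtration on $S_\rho$ is multiplicatively generated in weights $-1$ and $-2$. The key bridge is Mazur's canonical, $G_k$-equivariant identification
$$\mathfrak{m}_\rho/\mathfrak{m}_\rho^2 \simeq H^1(\pi_1^{\text{\'et}}(C_{\bar k},\bar c),\rho\otimes\rho^\vee)^\vee,$$
which carries the filtration of Definition~\ref{h1-weight-filtration} on $H^1$ to the weight filtration on $\mathfrak{m}_\rho/\mathfrak{m}_\rho^2$ under sign reversal. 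Consequently, for $\alpha\in\mathbb{Z}_\ell^\times$ in the open neighborhood of $1$ produced by Lemma~\ref{h1-quasiscalar}, dualizing that lemma shows that the associated $\sigma_\alpha\in G_k$ acts on $\on{gr}^i_W(\mathfrak{m}_\rho/\mathfrak{m}_\rho^2)$ via $\alpha^i\cdot\on{Id}$ for $i\in\{-1,-2\}$. The summand $H^0(\overline{C}_{\bar k},R^1j_*(\rho\otimes\rho^\vee))$ appearing in Lemma~\ref{h1-quasiscalar} plays no role here, since only $H^1$ controls deformations of $\rho$ as a $\pi_1^{\text{\'et}}(C_{\bar k},\bar c)$-representation.

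I would then argue by induction on $m\geq 0$ that $\sigma_\alpha$ acts on $\on{gr}^{-m}_W S_\rho$ via $\alpha^{-m}\cdot\on{Id}$. The cases $m=0$ and $m=1$ are immediate: $\on{gr}^0_W S_\rho = \overline{\mathbb{Q}_\ell}$ with trivial action, while $\on{gr}^{-1}_W S_\rho = \on{gr}^{-1}_W(\mathfrak{m}_\rho/\mathfrak{m}_\rho^2)$ directly from the definition of $W^{-2}S_\rho$. For $m\geq 2$, the defining formulas for $W^{-m}S_\rho$ together with the inclusion of a $G_k$-stable lift of $W^{-2}(\mathfrak{m}_\rho/\mathfrak{m}_\rho^2)$ into $W^{-2}S_\rho$ yield a $G_k$-equivariant surjection
$$\on{gr}^{-m}_W(\mathfrak{m}_\rho/\mathfrak{m}_\rho^2)\;\oplus\;\bigoplus_{\substack{i+j=m\\ i,j\geq 1}}\on{gr}^{-i}_W S_\rho\otimes_{\overline{\mathbb{Q}_\ell}}\on{gr}^{-j}_W S_\rho\;\twoheadrightarrow\;\on{gr}^{-m}_W S_\rho,$$
in which the first summand vanishes for $m>2$ (since $(H^1)^\vee$ is concentrated in weights $-1,-2$) and the tensor summands are induced by the multiplication maps $W^{-i}\otimes W^{-j}\to W^{-m}$. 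By the preceding paragraph, $\sigma_\alpha$ acts as $\alpha^{-m}$ on the first summand; by induction it acts as $\alpha^{-i}\cdot\alpha^{-j}=\alpha^{-m}$ on each tensor summand. Hence $\sigma_\alpha$ acts as $\alpha^{-m}\cdot\on{Id}$ on $\on{gr}^{-m}_W S_\rho$ as desired.

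All of the genuine substance is packaged into Lemma~\ref{h1-quasiscalar}, which is where Bogomolov's openness theorem and the $p$-adic Hodge theory of de Rham local systems enter the argument; the above reduction is essentially formal. The only points meriting real care are the $G_k$-equivariance of Mazur's cotangent-space isomorphism (which holds because the $G_k$-action on $S_\rho$ is induced by functoriality from its action on $\pi_1^{\text{\'et}}(C_{\bar k},\bar c)$, and thus matches the Galois action on group cohomology) and the verification that the displayed surjection really exhausts $\on{gr}^{-m}_W S_\rho$ for $m=2$, where one must check that $\mathfrak{m}_\rho^2$ and the lift of $W^{-2}(\mathfrak{m}_\rho/\mathfrak{m}_\rho^2)$ together span $W^{-2}S_\rho$ modulo $W^{-3}S_\rho$; both follow directly from the definitions given just before the theorem.
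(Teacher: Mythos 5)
Your reduction works for every graded piece except the one that actually requires an argument, and the gap sits exactly where the paper spends most of its effort. For $m>2$ the definition $W^{-m}S_\rho=\sum_{i+j=m}(W^{-i}S_\rho)\cdot(W^{-j}S_\rho)$ does make the induction formal, and the base case $m=1$ is immediate. But at $m=2$ your argument invokes ``the inclusion of a $G_k$-stable lift of $W^{-2}(\mathfrak{m}_\rho/\mathfrak{m}_\rho^2)$ into $W^{-2}S_\rho$,'' and the existence of such a lift is precisely the non-formal content of the theorem. What is given canonically is an exact sequence
$$0\to \on{im}\bigl(\on{gr}^{-1}_WS_\rho\otimes \on{gr}^{-1}_WS_\rho\bigr)\to \on{gr}^{-2}_WS_\rho\to W^{-2}(\mathfrak{m}_\rho/\mathfrak{m}_\rho^2)\to 0,$$
in which $\sigma_\alpha$ acts by $\alpha^2$ on the sub \emph{and} by $\alpha^2$ on the quotient. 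Because the two eigenvalues coincide, you cannot split this equivariantly by a weight/eigenvalue argument; all you get formally is that $(\sigma_\alpha-\alpha^2\cdot\on{Id})^2=0$ on $\on{gr}^{-2}_WS_\rho$, which leaves open a nontrivial Jordan block mixing $W^{-2}(\mathfrak{m}_\rho/\mathfrak{m}_\rho^2)$ with the image of $\on{Sym}^2$ of the weight $-1$ part. Your closing remark that the needed spanning statement ``follows directly from the definitions'' conflates two things: that a lift, if it exists, spans (true and easy) versus that an \emph{equivariant} lift exists (the hard point).

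The paper closes this gap using the boundary of the curve: it identifies $W^{-2}(\mathfrak{m}_\rho/\mathfrak{m}_\rho^2)$ with the image of $\bigoplus_i\mathfrak{m}_i/\mathfrak{m}_i^2$, where $S_i$ is the deformation ring of $\rho$ restricted to the inertia group at the $i$-th puncture, and uses Lemma \ref{horrible-lemma} to produce Galois-equivariant injections $\gamma_i^*\colon S_i\to S_\rho$. Since $\sigma_\alpha$ acts on $\mathfrak{m}_i^r/\mathfrak{m}_i^{r+1}$ by the pairwise distinct scalars $\alpha^{2r}$, each $S_i$ decomposes into $\sigma_\alpha$-eigenspaces, and the composite $\mathfrak{m}_i/\mathfrak{m}_i^2\to S_i\to S_\rho$ furnishes the equivariant lift. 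Note also that this is where the second half of Lemma \ref{h1-quasiscalar} --- the action on $H^0(\overline{C}_{\bar k,\text{\'et}},R^1j_*(\rho\otimes\rho^\vee))$, which you dismissed as playing no role --- is actually used, since $\bigoplus_i H^1(z_i,\eta_i^*(\rho\otimes\rho^\vee))$ is exactly that $H^0$. To repair your proof you must either reproduce this local analysis or give some substitute construction of a $\sigma_\alpha$-equivariant section (e.g.\ via the matrix entries of $\log\rho_{\text{univ}}(\iota_i^N)$ for generators $\iota_i$ of inertia, as in the paper's remark following the proof).
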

Before giving the proof, we need to analyze the contribution of the inertia subgroups of $\pi_1^{\text{\'et}}(C_{\bar k}, \bar c)$ to the geometry of $S_\rho$.

Recall that $\overline{C}$ is the smooth curve compactifying $C$. Let $D=\overline{C}\setminus C$, and pass to a finite extension of $k$ so that $D=\{x_1, \cdots, x_n\}$ for rational points $x_i\in \overline{C}(k)$.  For $i=1, \cdots, n$, let $R_i$ be the local ring of $\overline{C}$ at $x_i$; by the Cohen structure theorem, each $R_i\simeq k[[t]]$. Let $\overline{R_i}$ be the completion of $R_i\otimes \overline{k}$ at its maximal ideal, and let $z_i= \on{Spec}(\on{Frac}(\overline{R_i}))$.  Let $$\eta_i:z_i \to C_{\bar k}$$ be the natural inclusion.  A local computation shows that $$H^0(\overline{C}_{\bar k}, R^1j_*(\rho\otimes \rho^\vee))\simeq \bigoplus_i H^1(z_i, \eta_i^*(\rho \otimes \rho^\vee)),$$ and that under this identification, the map 
\begin{equation}\label{tangent-map}
H^1(C_{\bar k}, \rho\otimes \rho^\vee)\to H^0(\overline{C}_{\bar k}, R^1j_*(\rho\otimes \rho^\vee)\simeq \bigoplus_i H^1(z_i, \eta_i^*(\rho \otimes \rho^\vee))
\end{equation}
 is given by $\oplus_i \eta_i^*$.

Let $\overline{K_i}$ be an algebraic closure of $K_i:=\on{Frac}(\overline{R_i})$, and let $\bar z_i$ be the geometric point of $C$ associated to $z_i$ by this choice.  The inclusion $\eta_i: z_i\to C_{\bar k}$ induces a galois-equivariant inclusion $\gamma_i: \widehat{\mathbb{Z}}(1)\simeq\on{Gal}(\overline{K_i}/K_i)\hookrightarrow \pi_1^{\text{\'et}}(C_{\bar k}, \bar z_i)$. Let $\rho_i=\rho\circ \gamma_i$ be the restriction of $\rho$ to $\on{Gal}(\overline{K_i}/K_i)$.  (Here we view $\rho$ as a representation of  $\pi_1^{\text{\'et}}(C_{\bar k}, \bar z_i)$ rather than $\pi_1^{\text{\'et}}(C_{\bar k}, \bar c)$ via a change-of-basepoint isomorphism, well-defined up to conjugacy. Recall from before that for any two choices of basepoint, the resulting deformation rings $S_\rho$ are canonically isomorphic, so this indeterminacy will not affect our later constructions.)

Let $S_i$ be a hull for the deformation functor $D_{\rho_i}$ for $\rho_i$ (that is, the functor which associates to an Artinian $\overline{\mathbb{Q}_\ell}$-algebra $A$ the set of isomorphism classes of representations $\on{Gal}(\overline{K_i}/K_i)\to GL_n(A)$ lifting $\rho_i$).  As $K_i$ has cohomological dimension $1$, $S_i$ is a smooth complete local Noetherian $\overline{\mathbb{Q}_\ell}$-algebra, with $$\mathfrak{m}_i/\mathfrak{m}_i^2\simeq H^1(z_i, \eta_i^*(\rho \otimes \rho^\vee))^\vee$$ canonically, where $\mathfrak{m}_i$ is the maximal ideal of $S_i$.

The map $\gamma_i$ gives a $S_\rho$-point of $D_{\rho_i}$; we may choose a lift to $S_i$, giving a map $\gamma_i^*: S_i\to S_\rho$ for each $i$. We must deal with some subtle issues coming from the non-canonicity of this choice.
\begin{lemma}\label{horrible-lemma}
Suppose $|D|\geq 2$. Then 
\begin{enumerate}
\item the natural transformation $D_{\rho}\to D_{\rho_i}$ is an epimorphism,
\item any lift of this map to a map $\gamma_i^*: S_i\to S_\rho$ is injective, 
\item its image is stable under the action of $G_k$, and
\item the induced $G_k$-action on $S_i$ lifts its action on $D_{\rho_i}$.
\end{enumerate}
\end{lemma}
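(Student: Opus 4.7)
The plan is to exploit the fact that, in characteristic zero and with $|D| \geq 2$, the geometric fundamental group $\pi_1^{\text{\'et}}(C_{\bar k}, \bar c)$ is a topologically free profinite group in which a topological generator $c_i$ of the inertia at $x_i$ can be taken to be part of a free generating set. Indeed, if $g$ is the genus of $\overline{C}$ and $n = |D|$, then $\pi_1^{\text{\'et}}(C_{\bar k}, \bar c)$ is the profinite completion of a free group on $2g + n - 1$ generators, and the standard presentation with relation $\prod_j [a_j, b_j] \prod_\ell c_\ell = 1$ lets us solve for any one $c_\ell$; when $n \geq 2$ we may therefore arrange, for any given $i$, that $c_i$ itself appears as one of the free generators.

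Given this, I would first observe that both $D_\rho$ and $D_{\rho_i}$ are formally smooth: for $D_\rho$ this follows from $H^2_{\text{\'et}}(C_{\bar k}, \rho\otimes\rho^\vee) = 0$ since $C$ is an affine curve, and for $D_{\rho_i}$ from the fact that $K_i$ has cohomological dimension one. Hence $S_\rho$ and $S_i$ are both (non-canonically) power series rings over $\overline{\mathbb{Q}_\ell}$. Under the identification of \'etale and group cohomology, the tangent map $H^1(C_{\bar k,\text{\'et}}, \rho\otimes\rho^\vee) \to H^1(z_i, \rho_i\otimes\rho_i^\vee)$ is the restriction from the free profinite group $\pi_1^{\text{\'et}}(C_{\bar k}, \bar c)$ to the subgroup $\langle c_i\rangle$, and the free-generator property from the first paragraph makes it surjective: any cocycle on $\langle c_i\rangle$ extends by setting its values on the other free generators to zero. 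Surjectivity on tangent spaces together with formal smoothness of $D_\rho$ over $D_{\rho_i}$ then yields (1). Dualizing gives an injection of cotangent spaces $\mathfrak{m}_i/\mathfrak{m}_i^2 \hookrightarrow \mathfrak{m}_\rho/\mathfrak{m}_\rho^2$; choosing generators of $\mathfrak{m}_i$ whose images lie in a regular system of parameters for $S_\rho$ then identifies $S_i$ with a closed power series subring of $S_\rho$, proving (2).

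For (3) and (4), the essential point is that the natural transformation $D_\rho \to D_{\rho_i}$ is $G_k$-equivariant: $G_k$ acts on $\pi_1^{\text{\'et}}(C_{\bar k}, \bar c)$ permuting conjugacy classes of inertia subgroups, the $k$-rationality of $x_i$ fixes the conjugacy class at $x_i$, and on this subgroup the action is the cyclotomic one on $\widehat{\mathbb{Z}}(1) \cong \on{Gal}(\overline{K_i}/K_i)$. Writing $\psi_\sigma \in \on{Aut}(S_\rho)$ for the action of $\sigma \in G_k$, this equivariance together with the hull property of $S_i$ supplies an automorphism $\tau_\sigma \in \on{Aut}(S_i)$ satisfying $\psi_\sigma \circ \gamma_i^* = \gamma_i^* \circ \tau_\sigma$. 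Taking images gives $\psi_\sigma(\on{im}(\gamma_i^*)) = \on{im}(\gamma_i^*)$, which is (3); transporting structure along the injection of (2) endows $S_i$ with the automorphism $\tau_\sigma$, and by construction this $G_k$-action on $S_i$ lifts the action on $D_{\rho_i}$, giving (4).

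The main obstacle I anticipate is the bookkeeping in the last paragraph: $\tau_\sigma$ is a priori only defined up to a hull automorphism of $S_i$ fixing the universal deformation class, and some care is needed to check that, once $S_i$ is identified with $\on{im}(\gamma_i^*) \subset S_\rho$ via (2), this ambiguity collapses and the map $\sigma \mapsto \tau_\sigma$ is a genuine $G_k$-action by ring automorphisms. The hypothesis $|D|\geq 2$ is essential in the tangent-surjectivity step: for $n = 1$ the inertia loop is a commutator of the remaining free generators and restriction to $H^1(z_i, \rho_i\otimes\rho_i^\vee)$ need not be surjective, so both (1) and (2) can genuinely fail.
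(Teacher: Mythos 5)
Your treatment of (1) and (2) matches the paper's in substance: the paper also uses the standard presentation of $\pi_1^{\text{\'et}}(C_{\bar k},\bar z_i)$ and solves the single relation for one of the \emph{other} boundary generators, which is exactly where $|D|\geq 2$ enters; it does this directly on $A$-points for every Artinian $A$ rather than on tangent spaces plus smoothness, but your version is an acceptable repackaging of the same idea, and (2) is the same cotangent-space argument in both.

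The gap is in (3)--(4). The identity $\psi_\sigma\circ\gamma_i^*=\gamma_i^*\circ\tau_\sigma$ does not follow from equivariance of $D_\rho\to D_{\rho_i}$ together with the hull property of $S_i$. Versality only gives you a $\tau_\sigma$ such that the two ring maps $\psi_\sigma\circ\gamma_i^*$ and $\gamma_i^*\circ\tau_\sigma$ induce the same composite $\on{Hom}(S_\rho,-)\to\on{Hom}(S_i,-)\to D_{\rho_i}$; since $\rho_i$ need not be irreducible, the hull map $\on{Hom}(S_i,-)\to D_{\rho_i}$ is not a monomorphism, so you cannot conclude that the two ring maps coincide. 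Moreover, deducing (3) by ``taking images'' from this identity is circular: the assertion $\psi_\sigma(\on{im}\gamma_i^*)=\on{im}\gamma_i^*$ \emph{is} statement (3). You correctly flag this as the main obstacle, but it is not mere bookkeeping; it needs an input your argument does not supply. The paper's resolution is a concrete description of the image: $\on{im}(\gamma_i^*)$ is the closed subring of $S_\rho$ topologically generated by the matrix entries of $\rho_{\text{univ}}$ evaluated on the inertia subgroup $\on{im}(\gamma_i)\subset\pi_1^{\text{\'et}}(C_{\bar k},\bar z_i)$. Because the basepoint is taken at $z_i$, $G_k$ preserves this subgroup on the nose (not merely its conjugacy class), so stability of the image is immediate; one then \emph{defines} $\tau_\sigma$ by transport of structure along the injection of (2), and (4) follows from a diagram chase using that $D_\rho\to\on{Hom}(S_i,-)$ is an epimorphism. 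To repair your argument, prove (3) first by this direct description and only afterwards construct $\tau_\sigma$.
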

\begin{proof}
We first prove (1).  It is enough to show that for each Artin $\overline{\mathbb{Q}_\ell}$-algebra $A$, the map $D_\rho\to D_{\rho_i}$ is surjective.  Let $$\tilde\rho_i: \on{Gal}(\overline{K_i}/K_i)\to GL_n(A)$$ be an $A$-point of $D_{\rho_i}$.  We wish to lift it to an $A$-point of $D_\rho$.

Recall that if $C$ has genus $g$, then $$\pi_1^{\text{\'et}}(C_{\bar k}, \bar z_i)\simeq \langle a_1,b_1, \cdots, a_g, b_g,\lambda_1, \cdots, \lambda_{|D|} \big| \prod_{i=1}^g [a_i, b_i] \cdot \prod_{j=1}^{|D|} \lambda_j\rangle^{\widehat{}},$$ 
where without loss of generality, $\on{Gal}(\overline{K_i}/K_i)\subset \pi_1^{\text{\'et}}(C_{\bar k}, \bar z_i)$ is topologically generated by $\lambda_1$.  We wish to find a representation $\tilde\rho$ of $\pi_1^{\text{\'et}}(C_{\bar k}, \bar z_i)$ into $GL_n(A)$, lifting $\rho$, where the value of $\lambda_1$ is specified, say $\tilde\rho(\lambda_1)=M$.  But we may choose arbitrary lifts of $\rho(a_1), \rho(b_1), \cdots, \rho(a_g), \rho(b_g)$, set $\tilde\rho(\lambda_1)=M,$ choose $\tilde(\rho(\lambda_i))$ arbitrary lifts of $\rho(\lambda_i)$ for $i=2, \cdots, |D|-1$, and set $$\tilde\rho(\lambda_{|D|})=\left(\prod_{i=1}^g [\tilde\rho(a_i), \tilde\rho(b_i)] \cdot \prod_{j=1}^{|D|-1} \tilde\rho(\lambda_j)\right)^{-1}.$$

We now prove (2).  $S_i, S_\rho$ are both power series rings over $\overline{\mathbb{Q}_\ell}$, since $\on{Gal}(\overline{K_i}/K_i), \pi_1^{\text{\'et}}(C_{\bar k}, \bar z_i)$ have cohomological dimension $1$.  So it is enough to show that the induced map $\mathfrak{m}_i/\mathfrak{m}_i^2\to \mathfrak{m}_\rho/\mathfrak{m}_\rho^2$ is injective.  But this follows by applying (1) to the map $$D_\rho(\overline{\mathbb{Q}_\ell}[\epsilon]/\epsilon^2)\to D_{\rho_i}(\overline{\mathbb{Q}_\ell}[\epsilon]/\epsilon^2).$$

We now prove (3). The image of $\gamma_i^*$ is generated by the matrix entries of $\rho_{\text{univ}}(\on{im}(\gamma_i))$.  But $G_k$  preserves $\on{im}(\gamma_i)$ by our choice of basepoint. Hence the image is stable under the $G_k$-action, as desired. Hence we have a $G_k$-action on $S_i$ so that $\gamma_i^*$ is $G_k$-equivariant, by the injectivity of $\gamma_i^*$.

Finally, we prove (4). For $\sigma\in G_k$, we wish to show that the diagram
$$\xymatrix{
S_i\ar[r]^{\sigma} \ar[d] & S_i\ar[d]\\
D_{\rho_i} \ar[r]^{\sigma} & D_{\rho_i}
}$$
commutes, where $\sigma$ acts as described in the previous paragraph.  But this follows from the fact that the diagram 
$$\xymatrix{
D_\rho \ar[r]^{\sigma} \ar[d] & D_\rho\ar[d]\\
D_{\rho_i} \ar[r]^{\sigma} & D_{\rho_i}
}$$
commutes, and the fact that $$D_\rho\overset{\gamma_i^*}{\to}  \on{Hom}(S_i, -)$$ is an epimorphism, by (2). 
\end{proof}
\begin{proof}[Proof of Theorem \ref{pi1-quasiscalar}]
It suffices to prove the theorem after deleting several closed points of $C$, so we may assume that $|D_{\bar k}|\geq 2$, where $D=\overline{C}\setminus C$.

From Lemma \ref{h1-quasiscalar}, we know that for $\alpha\in \mathbb{Z}_\ell^{\times}$ sufficiently close to $1$, there exists $\sigma_\alpha\in G_k$ acting on $$\on{gr}^i_W \mathfrak{m}_\rho/\mathfrak{m}_\rho^2=\on{gr}^i_W H^1(C_{\bar k}, \rho\otimes \rho^\vee)^\vee$$ via $\alpha^i\cdot\on{Id}$.  We claim that such a $\sigma_\alpha$ also acts on $\on{gr}^i_W S_\rho$ in the desired manner.

\emph{Step 1.}  We first claim that it suffices to show that the exact sequence 
\begin{equation} \label{m-sequence}
0\to \mathfrak{m}_\rho^2/\mathfrak{m}_\rho^3\to \mathfrak{m}_\rho/\mathfrak{m}_\rho^3\to \mathfrak{m}_\rho/\mathfrak{m}_\rho^2\to 0
\end{equation}
splits $\sigma_\alpha$-equivariantly.  Indeed, multiplication gives an isomorphism $$\on{Sym}^i(\mathfrak{m}_\rho/\mathfrak{m}_\rho^2)\to \mathfrak{m}_\rho^i/\mathfrak{m}_\rho^{i+1},$$ so the eigenvalues of the $\sigma_\alpha$ action on $\mathfrak{m}_\rho^i/\mathfrak{m}_\rho^{i+1}$ are contained in $\{\alpha^i, \cdots, \alpha^{2i}\}$.  Thus (by the completeness of $S_\rho$), $\mathfrak{m}_\rho\to \mathfrak{m}_\rho/\mathfrak{m}_\rho^2$ splits $\sigma_\alpha$-equivariantly if and only if sequence (\ref{m-sequence}) does, because neither $\alpha$ nor $\alpha^2$ appears as a generalized eigenvalue for the $\sigma_\alpha$-action on $\mathfrak{m}_\rho^3$.  Such a splitting induces a $\sigma_\alpha$-equivariant isomorphism $$\on{Sym}^*(\mathfrak{m}_\rho/\mathfrak{m}_\rho^2)\overset{\sim}{\to} S_\rho,$$ which respects the weight filtrations, where the weight filtration on $\on{Sym}^*(\mathfrak{m}_\rho/\mathfrak{m}_\rho^2)$ is induced from the filtration on $\mathfrak{m}_\rho/\mathfrak{m}_\rho^2$, by the multiplicativity of the weight filtration. The element $\sigma_\alpha\in G_k$ clearly acts on $\on{Sym}^*(\mathfrak{m}_\rho/\mathfrak{m}_\rho^2)$ as desired (again by multiplicativity), so we are done.

\emph{Step 2.} We now show the sequence (\ref{m-sequence}) does indeed split $\sigma_\alpha$-equivariantly.  Write $$\mathfrak{m}_\rho/\mathfrak{m}_\rho^2=V_1\oplus V_2,$$ where $V_i$ is the $\alpha^i$-eigenspace of $\sigma_\alpha$.  The $\sigma_\alpha$-action on $\mathfrak{m}_\rho^2/\mathfrak{m}_\rho^3$ has eigenvalues in $\{\alpha^2, \alpha^3, \alpha^4\}$, so the projection $$\mathfrak{m}_\rho/\mathfrak{m}_\rho^3\to \mathfrak{m}_\rho/\mathfrak{m}_\rho^2\to V_1$$ splits $\sigma_\alpha$-equivariantly. Thus it suffices to show that the projection $$\mathfrak{m}_\rho/\mathfrak{m}_\rho^3\to \mathfrak{m}_\rho/\mathfrak{m}_\rho^2\to V_2$$ splits $\sigma_\alpha$-equivariantly.

We first give an explicit description of the subspace $V_2\subset \mathfrak{m}_\rho/\mathfrak{m}_\rho^2$. Recall that $\mathfrak{m}_\rho/\mathfrak{m}_\rho^2\simeq H^1(C_{\bar k}, \rho\otimes \rho^\vee)^\vee$ canonically.  As before, let $\overline{C}$ be the unique smooth, geometrically connected, proper curve containing $C$, and $j: C\hookrightarrow \overline{C}$ the natural inclusion.  By sequence (\ref{leray-sequence}), $V_2$ is the image of the dual of the natural map $$H^1(C_{\bar k}, \rho\otimes \rho^\vee)\to H^0(\overline{C}_{\bar k}, R^1j_*(\rho\otimes \rho^\vee))$$ (the second map in sequence (\ref{leray-sequence})).  Recall that this map is described in line (\ref{tangent-map}), i.e. it is given by $\oplus_i \eta_i^*,$ where we identify $$H^0(\overline{C}_{\bar k}, R^1j_*(\rho\otimes \rho^\vee)\simeq \bigoplus_i H^1(z_i, \eta_i^*(\rho \otimes \rho^\vee))$$ and $$\eta_i^*: H^1({C}_{\bar k}, \rho\otimes \rho^\vee)\to H^1(z_i, \eta_i^*(\rho\otimes \rho^\vee))$$ is induced by the inclusion $$\eta_i: \overline{z_i}\to C_{\bar k}.$$

Now let $S_i, \mathfrak{m}_i$ be as in Lemma \ref{horrible-lemma}. The map $\mathfrak{m}_i/\mathfrak{m}_i^2\to \mathfrak{m}_\rho/\mathfrak{m}_\rho^2$ induced by $\gamma_i$ is dual to $\eta_i^*$, so $$\bigoplus_i \eta_i^{*\vee}: \bigoplus_i \mathfrak{m}_i/\mathfrak{m_i}^2\to V_2$$ is surjective.  Thus it suffices to lift this map to a $\sigma_\alpha$-equivariant map $$\bigoplus_i \mathfrak{m}_i/\mathfrak{m_i}^2\to S_\rho.$$

By Lemma \ref{horrible-lemma}, we have a Galois-equivariant map $$\gamma_i^*: S_i\to S_\rho,$$ equivariantly lifting the map $D_\rho\to D_{\rho_i}$.  The element $\sigma_\alpha\in G_k$ acts on $\mathfrak{m}_i^r/\mathfrak{m}_i^{r+1}$ via $\alpha^{2i}\cdot \on{Id}$, so the decomposition of $S_i$ into $\sigma_\alpha$-eigenspaces gives an isomorphism $$S_i\simeq \prod_{r\geq 0} \mathfrak{m}_i^r/\mathfrak{m}_i^{r+1}.$$  Let $p_i: \mathfrak{m}_i/\mathfrak{m}_i^2\to S_\rho$  the map induced by this decomposition.

Then the map $$\bigoplus_i p_i: \bigoplus_i \mathfrak{m}_i/\mathfrak{m}_i^2\to \mathfrak{m}_\rho\to S_\rho$$ is the desired lift of $\bigoplus_i \eta_i^{*\vee}$, completing the proof.
\end{proof}
\begin{remark}
We could have split the short exact sequence (\ref{m-sequence}) directly, giving a slightly more explicit proof of Theorem \ref{pi1-quasiscalar}.  If we let $\iota_i\in \on{Gal}(\overline{K_i}/K)$ be a generator, it is not hard to see that $\sigma_\alpha(\iota_i)=\iota_i^{\alpha^2}$. We have that $\rho(\iota_i)$ is quasi-unipotent by the geometricity of $\rho$; let $N$ be an integer such that $\rho(\iota_i)^N$ is unipotent, so that $\log(\rho_{\text{univ}}(\iota_i^N))$ converges.  Then $\sigma_\alpha$ acts on the matrix entries of $\log(\rho_{\text{univ}}(\iota_i^N))$ via multiplication by $\alpha^2$, and one may check directly that as one varies over all $i$, the span of these matrix entries surjects onto $V_2\subset \mathfrak{m}_\rho/\mathfrak{m}_\rho^2$, providing the desired $\sigma_\alpha$-equivariant splitting. The proof above, though morally the same, is slightly more efficient, as Lemma \ref{horrible-lemma} saves us from checking that an unreasonable number of diagrams commute.
\end{remark}
\subsection{The integral analysis}
In the previous section, we constructed natural elements $\sigma_\alpha\in G_k$ whose action on the deformation ring of an irreducible arithmetic $\overline{\mathbb{Q}_\ell}$-representation we understand well. We now consider a geometrically irreducible representation $$\bar\rho:\pi_1^{\text{\'et}}(C_{\bar k}, \bar c)\to GL_n(\mathbb{F}_{\ell^r}),$$ and we use the results of the previous section to analyze the action of $\sigma_\alpha$ on $R_{\bar\rho}$, assuming $\bar\rho$ admits a lift which arises from geometry.
\begin{lemma}\label{denominators-lemma}
Let $V$ be a finite free $\overline{\mathbb{Z}_\ell}$-module, and $T: V\to V$ an endomorphism. Let $W\subset V$ be a $T$-stable submodule with $W, V/W$ free $\overline{\mathbb{Z}_\ell}$-modules.  Suppose that $T|_W=\alpha\cdot \on{Id}$, and that $T$ acts on $V/W$ via $\beta\cdot\on{Id}$, where $\alpha,\beta \in \overline{\mathbb{Z}_\ell}, \alpha\not=\beta$.  Let $v\in V\otimes \overline{\mathbb{Q}_\ell}$ be such that
\begin{enumerate}
\item $Tv=\beta v$, and
\item $v\in V+(W\otimes\overline{\mathbb{Q}_\ell})$.
\end{enumerate}
Then $(\alpha-\beta)\cdot v\in V.$
\end{lemma}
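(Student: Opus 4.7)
The plan is to write $v = v_1 + w$ with $v_1 \in V$ and $w \in W \otimes_{\overline{\mathbb{Z}_\ell}} \overline{\mathbb{Q}_\ell}$ (such a decomposition exists by hypothesis (2), though it is not unique), then exploit the eigenvalue condition (1) to control the denominators of $w$ using the fact that $\alpha \neq \beta$.

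Concretely, I would apply $T$ to $v = v_1 + w$. Since $T$ acts on $W$ as $\alpha \cdot \mathrm{Id}$, and this extends by $\overline{\mathbb{Q}_\ell}$-linearity to $W \otimes \overline{\mathbb{Q}_\ell}$, we have $Tw = \alpha w$. Combining with (1) gives
\[
Tv_1 + \alpha w = \beta v_1 + \beta w,
\]
that is, $(T - \beta) v_1 = (\beta - \alpha) w$. The left-hand side lies in $V$, while the right-hand side lies in $W \otimes \overline{\mathbb{Q}_\ell}$.

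At this point the key observation is that $V/W$ is a free $\overline{\mathbb{Z}_\ell}$-module, so $W$ is saturated in $V$, i.e.\ $V \cap (W \otimes \overline{\mathbb{Q}_\ell}) = W$. Consequently $(\beta - \alpha) w \in W$, and hence $(\alpha - \beta) w \in W \subset V$. Adding $(\alpha - \beta) v_1 \in V$ yields $(\alpha - \beta) v \in V$, as desired.

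I do not anticipate a serious obstacle here. The only subtle point is the need for $V/W$ to be free (rather than merely torsion-free over a PID, which is automatic here anyway since $\overline{\mathbb{Z}_\ell}$ is not Noetherian) to conclude that $W$ is saturated; the hypothesis of the lemma is phrased precisely to make this step immediate. Everything else is formal linear algebra over $\overline{\mathbb{Z}_\ell}$.
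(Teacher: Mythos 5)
Your proof is correct and is essentially the paper's argument in coordinate-free form: the paper extends a basis of $W$ to a basis of $V$ (possible because $V/W$ is free), writes out $Tv=\beta v$ in coordinates, and solves for the $W$-components to find they equal an integral vector divided by $\beta-\alpha$, which is exactly your identity $(T-\beta)v_1=(\beta-\alpha)w$ together with the saturation $V\cap(W\otimes\overline{\mathbb{Q}_\ell})=W$. Both arguments use hypothesis (2) and the freeness of $V/W$ in the same way, so there is nothing to add.
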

\begin{proof}
Choose a $\overline{\mathbb{Z}_\ell}$-basis $\{e_i\}$ of $V$ such that $e_1, \cdots, e_{\dim W}$ are a $\overline{\mathbb{Z}_\ell}$-basis of $W$.  Then $v=\sum a_i e_i$, where the $a_i\in \overline{\mathbb{Q}_\ell}$, and $a_i\in\overline{\mathbb{Z}_\ell}$ for $i>\dim W$.  We have $$Te_i=\begin{cases} \alpha  e_i & i\leq \dim W\\ \beta e_i +\sum_{j=1}^{\dim W} b_{ij} e_j & i>\dim W\end{cases}$$ where the $b_{ij}\in \overline{\mathbb{Z}_\ell}$.
Now 
\begin{align*}
\beta\cdot v &= Tv\\
&= \sum_{i=1}^{\dim W} \alpha a_i e_i +\sum_{i=\dim W+1}^{\dim V} \left(\beta a_i e_i+\sum_{j=1}^{\dim W} b_{ij} e_j\right)\\
&=\sum_{i=1}^{\dim W} (\alpha a_i+\sum_{j=1}^{\dim W} b_{ji})e_i+\sum_{i=\dim W+1}^{\dim V} \beta a_i e_i
\end{align*}
Equating coefficients for $e_i$, we have for each $i\leq \dim W$, $$\beta a_i=\alpha a_i+\sum_{j=1}^{\dim W} b_{ji}.$$  Hence $$a_i=\frac{\sum_{j=1}^{\dim W} b_{ji}}{\beta-\alpha},$$ whence the result follows.
\end{proof}
Now if $$\rho:\pi_1^{\text{\'et}}(C_{\bar k}, \bar c)\to GL_n(\overline{\mathbb{Q}_\ell})$$ is a lift of $\bar\rho$ (hence irreducible), the natural induced map $R_{\bar \rho}\to S_\rho$ exhibits $S_\rho$ as the completion of $R_{\bar\rho}{\otimes}\overline{\mathbb{Q}_\ell}$ at the maximal ideal corresponding to $\rho$.  The map $R_{\bar\rho}\to S_\rho$ gives $S_\rho$ a natural integral structure (namely the image of the induced map $R_{\bar\rho}\otimes \overline{\mathbb{Z}_\ell}\to S_\rho$). We now apply the computation in Lemma \ref{denominators-lemma} to estimate the denominators required to write down eigenvectors for the $\sigma_\alpha$-action on $S_\rho$, relative to this integral structure.
\begin{lemma}\label{eigenvector-lifting}
Let $$\rho: \pi_1(C_{\bar k}, \bar c)\to GL_n(\overline{\mathbb{Z}_\ell})$$ be a continuous representation lifting $\bar\rho\otimes \overline{\mathbb{F}_\ell}$, such that $\rho\otimes \overline{\mathbb{Q}_\ell}$  arises from geometry. Let $S^{\text{int}}_\rho \subset S_\rho$ be the image of the induced map $R_{\bar\rho}\otimes{\overline{\mathbb{Z}_\ell}}\to S_\rho$.

Let $\sigma_\alpha$ be as in Theorem \ref{pi1-quasiscalar}. Let $x\in S_\rho$ be such that 
\begin{enumerate}
\item $\sigma_\alpha\cdot x=\alpha^i x$, for $i=1$ or $i=2$, and
\item $x \in S^{\text{int}}_\rho+W^{-i-1}$.
\end{enumerate}
Then for any $r\geq i,$ $$\left(\prod_{j=i+1}^{r} (\alpha^i-\alpha^j)\right)\cdot x \in S^{\text{int}}_\rho+W^{-r-1}.$$ (Here if $r=i$ we take the product above to be the empty product, i.e.~$1$.)
\end{lemma}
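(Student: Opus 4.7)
The plan is to prove the statement by induction on $r \geq i$, reusing at each stage the denominator-clearing mechanism of Lemma \ref{denominators-lemma} in the concrete geometry of $S_\rho$. The base case $r = i$ is precisely hypothesis (2). For the inductive step, set $y_r := \left(\prod_{j=i+1}^{r}(\alpha^i-\alpha^j)\right) \cdot x$ and, by the inductive hypothesis, write $y_r = y_0 + w$ with $y_0 \in S^{\text{int}}_\rho$ and $w \in W^{-r-1}$. The goal is to show $(\alpha^i - \alpha^{r+1}) \cdot y_r \in S^{\text{int}}_\rho + W^{-r-2}$, from which the induction closes.

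The structural inputs I will use are: (a) $\sigma_\alpha$ preserves $S^{\text{int}}_\rho$, because the $G_k$-action on $R_{\bar\rho}$ is $\Lambda$-linear and $S^{\text{int}}_\rho$ is by definition the image of $R_{\bar\rho} \otimes_\Lambda \overline{\mathbb{Z}_\ell}$ in $S_\rho$; (b) $\sigma_\alpha$ preserves the weight filtration $W^\bullet$ on $S_\rho$, since the filtration is constructed Galois-equivariantly from $\mathfrak{m}_\rho$ and from the weight filtration on $H^1(C_{\bar k},\rho\otimes\rho^\vee)$; (c) by Theorem \ref{pi1-quasiscalar}, $\sigma_\alpha$ acts on $\on{gr}^{-r-1}_W S_\rho = W^{-r-1}/W^{-r-2}$ as multiplication by $\alpha^{r+1}$; and (d) since $y_r$ is a $\overline{\mathbb{Z}_\ell}$-scalar multiple of $x$ and $\sigma_\alpha x = \alpha^i x$, we have $\sigma_\alpha y_r = \alpha^i y_r$.

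Expanding $\sigma_\alpha(y_0+w) = \alpha^i(y_0+w)$ and rearranging yields the identity $(\sigma_\alpha - \alpha^i) y_0 = (\alpha^i - \sigma_\alpha) w$. By (a) the left-hand side lies in $S^{\text{int}}_\rho$, while by (b) and (c) the right-hand side is congruent to $(\alpha^i - \alpha^{r+1}) w$ modulo $W^{-r-2}$. Therefore $(\alpha^i-\alpha^{r+1})w \in S^{\text{int}}_\rho + W^{-r-2}$, and since $(\alpha^i-\alpha^{r+1})\in \overline{\mathbb{Z}_\ell}$ we also have $(\alpha^i-\alpha^{r+1})y_0 \in S^{\text{int}}_\rho$. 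Summing these two gives $(\alpha^i-\alpha^{r+1})y_r \in S^{\text{int}}_\rho + W^{-r-2}$, completing the inductive step.

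I do not expect a genuine obstacle beyond checking the Galois-equivariance of $S^{\text{int}}_\rho$ and of $W^\bullet$, both of which are essentially built into the constructions of Section \ref{deformation-ring-preliminaries} and the definition of the weight filtration, and confirming that the factors $\alpha^i-\alpha^j$ are nonzero for $j > i$ --- automatic once $\alpha$ is taken in a sufficiently small, non-torsion neighborhood of $1$, which is the regime in which Theorem \ref{pi1-quasiscalar} produces $\sigma_\alpha$ in the first place. The conceptual content is that at each level $r+1$, the mismatch between the $\sigma_\alpha$-eigenvalue $\alpha^i$ of $x$ and the scalar $\alpha^{r+1}$ by which $\sigma_\alpha$ acts on $\on{gr}^{-r-1}_W$ is exactly the factor that must be cleared to push $x$ one step deeper into the weight filtration modulo the integral structure, which is the content of Lemma \ref{denominators-lemma} translated into this setting.
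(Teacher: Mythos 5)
Your proof is correct and follows essentially the same route as the paper: the same induction on $r$, driven by the same mechanism that the $\sigma_\alpha$-eigenvalue $\alpha^{r+1}$ on $\on{gr}^{-r-1}_W S_\rho$ differs from the eigenvalue $\alpha^i$ of $x$, so the factor $\alpha^i-\alpha^{r+1}$ clears the denominator at that level. The only (cosmetic) difference is that you carry out the eigenvalue-mismatch computation directly on $y_0+w$, whereas the paper packages it as an application of Lemma \ref{denominators-lemma} to the finite free modules $V=(S_\rho^{\text{int}}\cap W^{-r}+\overline{\mathbb{Z}_\ell}\cdot\bar x)/(S_\rho^{\text{int}}\cap W^{-r-1})$ and $W=(S_\rho^{\text{int}}\cap W^{-r})/(S_\rho^{\text{int}}\cap W^{-r-1})$; both versions rely on the same Galois-stability of $S^{\text{int}}_\rho$ and of $W^\bullet$ that you verify.
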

\begin{proof}
This follows by induction on $r$ from Lemma \ref{denominators-lemma}. The case $r=i$ follows from hypothesis (2) of the lemma.  Now let $r>i$. Suppose the result holds for $r-1$; we now prove it for $r$.  Let $\bar x\in S_\rho^{\text{int}}$ be any element such that $$\bar x=\left(\prod_{j=i+1}^{r-1} (\alpha^i-\alpha^j)\right)\cdot x\bmod W^{-r}.$$ Set $$V=(S_\rho^{\text{int}}\cap W^{-r}+\overline{\mathbb{Z}_\ell}\cdot \bar x)/(S_\rho^{\text{int}}\cap W^{-r-1}),$$ $$W=(S_\rho^{\text{int}}\cap W^{-r})/(S_\rho^{\text{int}}\cap W^{-r-1}),$$ and $$v=\left(\prod_{j=i+1}^{r-1}(\alpha^i-\alpha^j)\right)\cdot x \bmod W^{-r-1}.$$  Then the hypotheses of Lemma \ref{denominators-lemma} are satisfied by the induction hypothesis, giving the proof. 
\end{proof}
\begin{lemma}\label{denominator-estimate}
Let $\alpha\in \mathbb{Z}_\ell^\times$ be an $\ell$-adic unit which is not a root of unity.  Let $s$ be the least positive integer such that $\alpha^s=1\bmod \ell$; let $\epsilon=1$ if $\ell=2$ and $0$ otherwise.  Then $$\sum_{i=1}^n v_\ell(1-\alpha^i)\leq C(\alpha)\cdot n,$$ where $$C(\alpha)=\frac{1}{s}\left(v_\ell(\alpha^s-1)+\frac{1}{\ell-1}\right)+\epsilon.$$
\end{lemma}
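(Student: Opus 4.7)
The strategy is a direct application of the lifting-the-exponent (LTE) lemma together with Legendre's formula for the $\ell$-adic valuation of factorials. I would first observe that $v_\ell(1-\alpha^i)>0$ if and only if $\alpha^i\equiv 1\pmod{\ell}$, i.e.\ if and only if $s\mid i$; hence only the $\lfloor n/s\rfloor$ terms with $s\mid i$ contribute to the sum. Setting $\beta=\alpha^s$ and $N=\lfloor n/s\rfloor\leq n/s$, the sum collapses to
\[
\sum_{i=1}^n v_\ell(1-\alpha^i)=\sum_{k=1}^{N} v_\ell(1-\beta^k).
\]

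Since $\beta\equiv 1\pmod{\ell}$, LTE gives $v_\ell(\beta^k-1)=v_\ell(\beta-1)+v_\ell(k)$, whence
\[
\sum_{k=1}^{N}v_\ell(1-\beta^k)=N\cdot v_\ell(\alpha^s-1)+v_\ell(N!).
\]
Legendre's formula
\[
v_\ell(N!)=\sum_{j\geq 1}\left\lfloor \frac{N}{\ell^j}\right\rfloor \leq \frac{N}{\ell-1},
\]
combined with $N\leq n/s$, then yields
\[
\sum_{i=1}^n v_\ell(1-\alpha^i)\leq \frac{n}{s}\Bigl(v_\ell(\alpha^s-1)+\frac{1}{\ell-1}\Bigr),
\]
which is exactly $C(\alpha)\cdot n$ when $\ell$ is odd (so $\epsilon=0$).

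The main technical obstacle is the prime $\ell=2$, because the standard LTE at $p=2$ requires $v_2(\beta-1)\geq 2$ while $s\mid i$ in the literal sense $\alpha^s\equiv 1\pmod 2$ only guarantees $v_2(\beta-1)\geq 1$; equivalently, the initial reduction to $s\mid i$ is no longer sharp, since $v_2(1-\alpha^i)\geq 1$ for \emph{every} $i$ whenever $\alpha$ is odd. The extra $\epsilon=1$ term is built in precisely to absorb this gap. Concretely, one replaces $s$ by the least positive integer with $\alpha^s\equiv 1\pmod 4$ (so LTE applies cleanly to $\beta=\alpha^s$), and separately estimates the contribution of those indices $i$ with $s\nmid i$: each such term has $v_2(1-\alpha^i)=1$ and there are at most $n$ of them, giving an additive error of at most $n=\epsilon\cdot n$. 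Equivalently, one can apply LTE to $\beta^2$ directly via the factorization $\beta^{2m}-1=(\beta^m-1)(\beta^m+1)$ and absorb the extra factor of $2$ (for odd exponents) into the $\epsilon$ term. In either variant, combining this correction with the estimate of the previous paragraph yields the claimed bound $\sum\leq C(\alpha)\cdot n$.
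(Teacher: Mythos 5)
Your argument for odd $\ell$ is correct and complete: the reduction to indices divisible by $s$, the lifting-the-exponent identity $v_\ell(\beta^k-1)=v_\ell(\beta-1)+v_\ell(k)$ for $\beta=\alpha^s\equiv 1\pmod\ell$, and the bound $v_\ell(N!)\leq N/(\ell-1)$ together give exactly $C(\alpha)\cdot n$ with $\epsilon=0$. (The paper itself offers no proof here --- it cites Lemma 3.10 of \cite{litt} --- so a self-contained argument is welcome, and for odd $\ell$ yours is the standard one.)

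The case $\ell=2$, however, contains a genuine gap, and the hand-off to the $\epsilon$ term does not close it. When you replace $s$ by $s'$ (the order of $\alpha$ modulo $4$), the \emph{main} term of your estimate changes from $\tfrac{1}{s}v_2(\alpha^s-1)$ to $\tfrac{1}{s'}v_2(\alpha^{s'}-1)$; for $\alpha\equiv 3\pmod 4$ one has $s=1$, $s'=2$, and $\tfrac{1}{2}v_2(\alpha^2-1)=\tfrac{1}{2}\bigl(v_2(\alpha-1)+v_2(\alpha+1)\bigr)$, which exceeds $v_2(\alpha-1)+\epsilon$ by an unbounded amount as $v_2(\alpha+1)\to\infty$. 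The additive error of $n$ from the odd indices does not compensate for this. In fact no repair is possible, because the statement as written is false for $\ell=2$: take $\alpha=15$, so $s=1$, $v_2(\alpha-1)=1$, and $C(\alpha)=3$; then $\sum_{i=1}^{4}v_2(1-15^i)=1+5+1+6=13>3\cdot 4$. (More generally the claimed bound fails whenever $\alpha\equiv -1\pmod{16}$ and $n$ is large.) The correct statement for $\ell=2$ needs $s$ to be defined as the order of $\alpha$ modulo $4$ (or the constant must involve $v_2(\alpha^2-1)$). Note that in the one place the lemma is invoked, $\alpha$ is chosen $\ell$-adically close to $1$, so $\alpha\equiv 1\pmod 4$ when $\ell=2$; in that case your LTE computation gives $v_2(\alpha^k-1)=v_2(\alpha-1)+v_2(k)$ for all $k$ and the bound follows at once, without any need for $\epsilon$. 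You should either restrict to that case explicitly or correct the statement; as written, your final sentence asserts a conclusion that the preceding estimates do not deliver.
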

\begin{proof}
This is Lemma 3.10 of \cite{litt}.
\end{proof}
\subsection{The proof}  We now give the proof of Theorem \ref{ball-bound}.
\begin{proof}[Proof of theorem \ref{ball-bound}]
Let $E_{\bar\rho}$ be the rigid generic fiber of $R_{\bar\rho}$ (this agrees with the notation of Section \ref{deformation-ring-preliminaries} by \cite[Theorems B and F]{chenevier}); as $C$ is an affine curve, $E_{\bar\rho}$ is (non-canonically) analytically isometric to the open unit ball. Then $\rho$ gives a $\overline{\mathbb{Q}_\ell}$-point $z_\rho$ of $E_{\bar\rho}$; we may view $z_\rho$ as a map $$z_\rho: R_{\bar\rho}[1/\ell]\to \overline{\mathbb{Q}_\ell}.$$ 
Let $\mathfrak{m}\subset R_{\bar\rho}[1/\ell]$ be the kernel of this map, and let $S_{\rho}'$ be the completion of $R_{\bar\rho}[1/\ell]$ at $\mathfrak{m}$, so that $S_{\rho}\simeq S_{\rho}'\widehat{\otimes}\overline{\mathbb{Q}_\ell}$.  Let $\mathfrak{m}_{\rho}'$ be the maximal ideal of $S_{\rho}'$. Let $L$ be the residue field of $S_{\rho}'$, so that $S_{\rho}'$ is non-canonically isomorphic to $L[[x_1, \cdots, x_m]]$; let $S_{\rho}^{\text{int}'}=R_{\bar\rho}\otimes_{W(\mathbb{F}_{\ell^r})} \mathscr{O}_L\subset S_{\rho}'$ be the extension of scalars of $R_{\bar\rho}$ to $\mathscr{O}_L$, so that $S_{\rho}^{\text{int}'}$ is non-canonically isomorphic to $\mathscr{O}_L[[x_1, \cdots, x_m]]$, where $\{x_1, \cdots, x_m\}$ generate $\mathfrak{m}_{\rho}'\cap S_{\rho}^{\text{int}'}$.  Choose such an isomorphism. The rigid generic fiber of $S_{\rho}^{\text{int}'}$ is the open unit ball over $L$.

 Choose $\alpha\in \mathbb{Z}_\ell^{\times}$ not a root of unity such that 
\begin{enumerate}
\item there exists $\sigma_\alpha\in G_k$ as in Theorem \ref{pi1-quasiscalar}, and
\item $C(\alpha)$ is minimal among all $\alpha$ satisfying (1).
\end{enumerate}
(Here $C(\alpha)$ is defined as in Lemma \ref{denominator-estimate}.) 

We claim that we may take $N=N(c(\rho), \ell)$ to be any rational number greater than $3C(\alpha)$; note that this may be bounded from above purely in terms of $\ell$ and $c(\rho)$, as the notation suggests.

Let $\sigma_\alpha\in G_k$ be as in Theorem \ref{pi1-quasiscalar}.  For $1\geq r>0$, let $U_r\subset E_{\bar\rho}$ be the closed ball of radius $r$ around $\rho$ --- set-theoretically, this is the set $$\{\tilde\rho\in E_{\bar\rho}\big| |\on{Tr}(\tilde\rho)-\on{Tr}(\rho)|_\ell \leq r\}.$$  Note that $U_{\ell^{-s}}$ is stable under the $\sigma_\alpha$-action on $E_{\bar\rho}$. There is a unique $\pi_1^{\text{\'et}}(C_{\bar k}, \bar c)$-lattice $V_\rho\subset \rho$, up to homothety.  By \cite[Th\'{e}or\`{e}me 1]{carayol}, for $s\in\mathbb{Q}$, $U_{\ell^{-s}}$ is the same as the set of $\tilde\rho$ admitting an $\pi_1^{\text{\'et}}(C_{\bar k}, \bar c)$-stable $\overline{\mathbb{Z}_\ell}$-lattice $V_{\tilde\rho}$ such that $V_{\tilde\rho}/\ell^sV_{\tilde\rho}\simeq V_\rho/\ell^sV_\rho$.  

Let $\mathscr{O}_{U_{\ell^{-s}}}$ be the set of functions on $U_{{\ell^{-s}}}$.  Explicitly, under our chosen isomorphism $S_{\rho}^{\text{int}'}\simeq \mathscr{O}_L[[x_1, \cdots, x_m]]$ and the induced isomorphism $S_\rho'\simeq L[[x_1, \cdots, x_m]]$, $\mathscr{O}_{U_{\ell^{-s}}}\subset S_{\rho}'$ is the subring $$\left\{\sum a_Ix^I \in S_{\rho}'\mid v_\ell(a_I)+|I|\cdot s \to\infty\right\},$$ topologized via the Gauss norm (which makes it into a Tate algebra). (Here $I=(i_1, \cdots, i_m)$ is a multi-index with $i_j\geq 0$, $x^I=x_1^{i_1}\cdots x_m^{i_m}$, and $|I|=\sum_j i_j$.)

Choose a basis of integral $\sigma_\alpha$-eigenvectors of $\mathfrak{m}_\rho'/{\mathfrak{m}_{\rho}'}^2$, and lift it to a set of $\sigma_\alpha$-eigenvectors $\{e_1, \cdots, e_m\}$ of $S_{\rho}'$.  After a linear change of coordinates, we may assume $e_i\equiv x_i\bmod \mathfrak{m}_\rho'$.  We claim that for $s>2C(\alpha)$, we have $e_1, \cdots, e_m\subset \mathscr{O}_{U_{\ell^{-s}}}$.  Indeed, by Lemma \ref{eigenvector-lifting}, we have $$\left(\prod_{i=1}^{2r-1} (1-\alpha^i)\right)\cdot e_j\in S_{\rho}^{\text{int}'}+W^{-2r}\subset S_{\rho}^{\text{int}'}+\mathfrak{m}_{\rho}'^r,$$ and hence if $$e_i=\sum a_{I,i}x^I,$$ we have \begin{equation}\label{bounded-eigenvectors-eqn} v_\ell(a_{I,i})\geq -\sum_{i=1}^{2|I|-1} v_\ell(1-\alpha^i)\geq -(2|I|-1)C(\alpha) \end{equation} by Lemma \ref{denominator-estimate}, whence the claim follows.  By our choice of $e_i$, the coefficient of $x^i$ in the power series expression for $e_i$ is $1$.

Now choose any $s>3C(\alpha)$, and suppose that $\tilde\rho\in U_{\ell^{-s}}$ is arithmetic.  We wish to show that $\tilde\rho\simeq\rho$. We may view $\tilde\rho$ as a map $$z_{\tilde\rho}: \mathscr{O}_{U_{\ell^{-s}}}\to \overline{\mathbb{Q}_\ell},$$ where $v_\ell(z_{\tilde\rho}(x_i))\geq s.$  By the arithmeticity of $\tilde\rho$, there exists an integer $M$ such that $z_{\tilde\rho}$ is equivariant for the natural $\sigma_\alpha^M$-action on $\mathscr{O}_{U_{\ell^{-s}}}$, and the trivial action on $\overline{\mathbb{Q}_\ell}$.  In particular, for each $i$, we have $z_{\tilde\rho}(e_i)=0$.
But we have $$v_\ell(a_{I,i}z_{\tilde\rho}(x^I))=v_\ell(a_{I,i})+v_\ell(z_{\tilde\rho}(x^I))\geq -(2|I|-1)C(\alpha)+s\cdot |I|.$$  For $|I|\geq 2$, then, we have 
\begin{align*}
v_\ell(a_{I,i}z_{\tilde\rho}(x^I))&\geq-(2|I|-1)C(\alpha)+s\cdot |I|\\
&=-(2|I|-1)C(\alpha)+s+s(|I|-1)\\
&> -(2|I|-1)C(\alpha)+s+3C(\alpha)(|I|-1)\\
&=s+C(\alpha)(|I|-2)\\
&\geq s.
\end{align*}
Hence by the ultrametric inequality, $z_{\tilde\rho}(e_i)=0$ implies $z_{\tilde\rho}(x_i)=0$.  Thus $\ker(z_{\tilde\rho})=\ker(z_\rho)$, and hence $\rho=\tilde\rho$, as desired.
\end{proof}
\begin{remark}
From the proof, we see that we may take $N$ to be any rational number greater than $3C(\alpha)$, where $\alpha\in\mathbb{Z}_\ell^\times$ is such that there exists $\sigma_\alpha$ as in Theorem \ref{pi1-quasiscalar} (or Lemma \ref{h1-quasiscalar}).
\end{remark}
\subsection{The case of representations with finite image}
Finally, we prove Theorem \ref{finite-image-thm}. The proof is a slight variant of the main theorem of \cite{litt}, with additional input from a result of Serre, of which we were unaware at the time of writing \cite{litt}.
\begin{proof}[Proof of Theorem \ref{finite-image-thm}]
By passing to the cover of $X$ defined by $\ker(\pi_1^{\text{\'et}}(X_{\bar k}, \bar x)\to G$, we may assume $\rho$ is trivial.  Now the proof is essentially identical to that of Theorem 1.2 of \cite{litt}.  While Remark 4.3 of \cite{litt} indicates that the constant $N$ of that theorem depends on the index of the image of $G_k\to GL(H^1(X_{\bar k}, \mathbb{Z}_\ell)$ in the $\mathbb{Z}_\ell$-points of its Zariski closure; in fact it depends only on the set of $Z$ of $\alpha\in \mathbb{Z}_\ell^{\times}$ such that there exists $\sigma_\alpha\in G_k$ such that $\sigma_\alpha$ acts on $\on{gr}^{i}_WH^1(X_{\bar k}, \mathbb{Z}_\ell)$ via $\alpha^i\cdot \on{Id}$.  

By Remark 3.11 of \cite{litt}, it suffices to show that the index of $Z$ in $\mathbb{Z}_{\ell}^\times$ is bounded independent of $\ell$.  The case where $X$ has genus zero is covered in \cite{litt}, so we assume the genus of $X$ is at least one.

Let $\overline{X}$ be the unique smooth proper geometrically connected $k$-curve containing $X$, and let $D=\overline{X}\setminus X$. Replace $k$ with a finite extension, so that $D=\{x_1, \cdots, x_m\}$ for $x_i\in \overline{X}(k)$.  

Now we claim that $Z$ contains the set of $\alpha\in \mathbb{Z}_\ell^{\times}$ so that there exists $\sigma_\alpha'\in G_k$ such that $\sigma_\alpha'$ acts on $\on{gr}^{1}_W H^1(X_{\bar k}, \mathbb{Z}_\ell)$ via $\alpha\cdot\on{Id}$.  Indeed, $\on{gr}^2_WH^1(X_{\bar k}, \mathbb{Z}_\ell)$ is a direct sum of copies of the cyclotomic character, so by Poincar\'e duality, any such $\sigma_\alpha'$ in fact acts on $\on{gr}^2_WH^1(X_{\bar k}, \mathbb{Z}_\ell)$ as desired.

But now the index of $Z$ in $\mathbb{Z}_\ell^\times$ is bounded independent of $\ell$ by a result of Serre \cite[Lettre \`{a} Ken Ribet, p.~60]{serre-oeuvres}, using that $\on{gr}^{1}_W H^1(X_{\bar k}, \mathbb{Z}_\ell)$ is dual to the $\ell$-adic Tate module of the Jacobian of $\overline{X}$.
\end{proof}

\begin{remark}\label{tate-remark}
It is natural to conjecture that if $\rho_\ell$ is a compatible system of irreducible lisse sheaves on a curve, $c(\rho_\ell)$ is bounded independent of $\ell$. More precisely, fix $p,q\in \mathbb{Z}_{\geq 0}$. If $k$ is a number field and $f: X\to Y$ is a smooth proper morphism over $k$, let $H_\ell\subset \mathbb{Z}_\ell^\times$ be the set of $\alpha\in \mathbb{Z}_{\ell}^\times$ such that there exists $\sigma_\alpha\in G_k$ with $\sigma_\alpha$ acting on $\on{gr}^i_W H^p(Y_{\bar k}, R^qf_*\mathbb{Q}_\ell)$ via $\alpha^i\cdot \on{id}$.  We conjecture that the index of $H_\ell\subset \mathbb{Z}_\ell^\times$ is bounded independent of $\ell$, and indeed that this index is equal to $1$ for almost all $\ell$.

Assuming the Tate conjecture, one may show this index is uniformly bounded via the argument of \cite[\S 2.3]{wintenberger}.  This gives, by the proof of Theorem \ref{ball-bound}, a much stronger version of Theorem \ref{ball-bound}. It implies (on the Tate conjecture) that if $\rho_\ell$ is a compatible system of $\ell$-adic representations arising from geometry (i.e.~the monodromy representation underlying $R^qf_*\mathbb{Q}_\ell$ as above), then the constants $N(c(\rho_\ell), \ell)$ of Theorem \ref{ball-bound} tend to zero as $\ell\to\infty$. 
\end{remark}
\bibliographystyle{alpha}
\bibliography{monodromy-2-bibtex}

\end{document}